\newtheorem*{namedtheorem}{\theoremname}
  \newcommand{\theoremname}{testing}
\begin{document}

\title[Paths to Uniqueness]
{Paths to uniqueness of critical points and applications to partial differential equations}

\author[Bonheure]{Denis Bonheure}
\address{Denis Bonheure, Juraj F\"oldes and  Alberto Salda\~na\newline \indent D{\'e}partement de Math{\'e}matique --- Universit{\'e} libre de Bruxelles \newline \indent CP 214,  Boulevard du Triomphe, B-1050 Bruxelles, Belgium}
\email{denis.bonheure@ulb.ac.be, \ Juraj.Foldes@ulb.ac.be, \ asaldana@ulb.ac.be}

\author[F\" oldes]{Juraj F\"oldes}

\author[Moreira dos Santos]{Ederson Moreira dos Santos}
\address{Ederson Moreira dos Santos \newline \indent Instituto de Ci{\^e}ncias Matem{\'a}ticas e de Computa\c{c}{\~a}o --- Universidade de S{\~a}o Paulo \newline \indent
Caixa Postal 668, CEP 13560-970 - S\~ao Carlos - SP - Brazil}
\email{ederson@icmc.usp.br}

\author[Salda\~na]{Alberto Salda\~na}

\author[Tavares]{Hugo Tavares}
\address{Hugo Tavares \newline \indent Center for Mathematical Analysis, Geometry and Dynamical Systems,
 \newline \indent Mathematics Department --- Instituto Superior T\'ecnico, Universidade de Lisboa   \newline \indent
Av. Rovisco Pais, 1049-001 Lisboa, Portugal}
\email{htavares@math.ist.utl.pt}

\date{\today}

\subjclass[2010]{46N10; 49K20; 35J10; 35J15; 35J47; 35J62}

\keywords{Uniqueness of critical points; Non-convex functionals; Uniqueness of positive solutions to elliptic equations and systems.
}

\begin{abstract}
We prove a unified and general criterion for the uniqueness of critical points of a functional in the presence of 
constraints such as positivity, boundedness, or fixed mass.  Our method relies on convexity properties along suitable paths and 
significantly generalizes well-known uniqueness theorems. Due to the flexibility in the construction of the paths, 
our approach does not depend on the convexity of the domain and can be used to prove uniqueness in subsets, even if it does not hold globally.
The results apply to all critical points and not only to minimizers, thus they provide uniqueness of solutions to the corresponding Euler-Lagrange equations.
For functionals emerging from elliptic problems, the assumptions of our abstract theorems follow from maximum principles, decay properties, and novel general inequalities.
To illustrate our method we present a unified proof of known results, as well as new theorems for mean-curvature type operators, fractional Laplacians, Hamiltonian systems, 
Schr\" odinger equations, and Gross-Pitaevski systems. 
\end{abstract}

\maketitle
\numberwithin{equation}{section}
\newtheorem{theorem}{Theorem}[section]
\newtheorem{lemma}[theorem]{Lemma}
\newtheorem{example}[theorem]{Example}
\newtheorem{remark}[theorem]{Remark}
\newtheorem{proposition}[theorem]{Proposition}
\newtheorem{definition}[theorem]{Definition}
\newtheorem{corollary}[theorem]{Corollary}
\newtheorem*{open}{Open problem}

\tableofcontents

\section{Introduction}

The existence of critical points of a functional $I$ traditionally follows from general
 arguments based on Direct or Minimax Methods.

However, the uniqueness
of critical points is in general a  subtle issue, depending both on local and global properties of a functional,  and  
essentially on its domain of definition. In particular, this domain might reflect conserved quantities (such as mass or energy), or one-sided constraints (e.g. positivity or boundedness). 

\smallbreak
 Probably, the best known uniqueness result in the calculus of variations is the following: If $I$ is a strictly convex functional, defined 
on an open convex subset of a normed space, then it has at most one critical point, which is the global minimum
whenever it exists. However, problems with constraints have frequently non-convex domains and even for convex ones, 
such as the cone of positive functions, the convexity of $I$ is a very restrictive requirement. For example, in many problems zero is a critical point of $I$ (or solution of the corresponding Euler-Lagrange equation), and
if there exists a positive critical point then $I$ cannot be strictly convex in the cone of positive functions.

\smallbreak
In this paper we provide a simple yet general condition guaranteeing the uniqueness of critical points, which relies 
on an elementary observation: If $I$ is a smooth functional and $\gamma$ is a smooth curve connecting two critical 
points of $I$, then $t \mapsto I(\gamma(t)) =: F(t) \in \mathbb{R}$ cannot be a strictly convex function.
Indeed, since $\gamma$ connects critical points, the derivative of $F$ at the endpoints must vanish, which is impossible for  
a strictly convex function. This observation yields a uniqueness result whenever an appropriate curve $\gamma$ can be found, and below we  
construct such $\gamma$ for many problems involving nonlinear partial differential equations (pdes).
These examples contain new uniqueness proofs and a shorter and unified approach to some known results.    

\smallbreak
We emphasize that requiring convexity along a particular curve is much weaker than assuming convexity of $I$. Moreover, such path-wise convexity can be defined on sets that are not necessarily convex. As such, it can be used as a fine tool to prove uniqueness of critical points with certain additional criteria. 
Our method can also be used to prove the simplicity of the eigenvalues of non-linear eigenvalue problems, where the uniqueness holds up to multiplication by scalars. 

\smallbreak
Recall that the uniqueness immediately implies that the critical point
inherits all symmetries of the problem. For example, if the functional and its domain are radially symmetric, then 
the critical point possesses the same symmetry. 
Furthermore, the uniqueness simplifies the dynamics of the gradient flow induced by the functional, and 
in many cases provides global stability properties of equilibria. 

\smallbreak
Our main uniqueness result is formulated in a general abstract framework to allow applications to various problems. 

\begin{theorem}\label{th:abstracttheorem}
Let $(X, \|\cdot\|)$ be a normed space, $I: X \rightarrow {\mathbb R}$  be a Fr\'echet differentiable functional, and $A \subset X$ be a 
subset of critical points of $I$. If for all $u, v \in A$ there exists a map $\gamma : [0,1] \to X$ such that
 \begin{enumerate}[a)]
 \item $\gamma(0) = u$, $\gamma(1)=v$,
 \item $\gamma$ is locally Lipschitz at $t=0$, that is, $\|\gamma(t) - u\| \leq C t$ 
  for each $t \in [0, \delta]$ and some $\delta > 0$,
 \item $t\mapsto I(\gamma(t))$ is convex in $[0, 1]$,
 \end{enumerate}
then
 \begin{enumerate}[i)]
 \item  $I$ is constant on $A$, 
 \item $t\mapsto I(\gamma(t))$ is a constant function, 
 \item if condition c) holds with strict convexity  for  all $u \neq v$, then $A$ has at most one element.
 \end{enumerate}
 \end{theorem}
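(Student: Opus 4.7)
My plan is to reduce everything to an elementary fact about real convex functions: if $F : [0,1] \to \mathbb{R}$ is convex and its right derivative at $0$ vanishes, then $F$ is non-decreasing. Fix $u, v \in A$, let $\gamma$ be a path provided by hypotheses a)--c), and set $F(t) := I(\gamma(t))$.

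The key computation is that $\lim_{t \to 0^+} (F(t) - F(0))/t = 0$. Indeed, since $I'(u) = 0$, Fr\'echet differentiability at $u$ yields
\[
I(\gamma(t)) - I(u) = I'(u)[\gamma(t) - u] + o(\|\gamma(t) - u\|) = o(\|\gamma(t) - u\|),
\]
and hypothesis b) gives $\|\gamma(t) - u\| \leq C t$, so dividing by $t$ sends the right-hand side to zero. Now convexity c) guarantees that the difference quotient $(F(t) - F(0))/t$ is non-decreasing in $t \in (0, 1]$, so the vanishing of its limit at $0^+$ forces $(F(t) - F(0))/t \geq 0$ for every $t \in (0, 1]$, and in particular $I(v) = F(1) \geq F(0) = I(u)$. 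The hypotheses are symmetric in $u$ and $v$, so applying the same reasoning to a path joining $v$ to $u$ gives the reverse inequality, yielding $I(u) = I(v)$. This proves i).

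Once $F(0) = F(1)$, combining the above monotonicity $F(t) \geq F(0)$ with the convexity bound $F(t) \leq (1-t)F(0) + tF(1) = F(0)$ shows that $F$ is identically equal to $F(0)$, proving ii). For iii), a strictly convex function on $[0,1]$ cannot be constant, so under the strict version of c) the case $u \neq v$ is impossible, and $A$ has at most one element.

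The argument is essentially soft; the main technical point is the careful bookkeeping of the \emph{right} derivative at $t=0$, since $\gamma$ is only defined on $[0,1]$. Hypothesis b) is tailored precisely to convert Fr\'echet differentiability of $I$ at $u$ into the vanishing of this one-sided derivative: without the Lipschitz bound one could not control the ratio $\|\gamma(t)-u\|/t$. Note also that b) is imposed only at $t=0$, which is exactly why the \emph{symmetric} application of the hypothesis---swapping the roles of $u$ and $v$---is the ingredient that upgrades the one-sided inequality $I(v) \geq I(u)$ to the equality needed for i).
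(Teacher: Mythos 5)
Your proof is correct and follows essentially the same route as the paper: both arguments hinge on combining hypothesis b) with Fr\'echet differentiability at the critical point $u$ to show that the one-sided derivative of $F(t)=I(\gamma(t))$ vanishes at $t=0$, and then exploiting the monotonicity of difference quotients of the convex function $F$ to force constancy, with iii) following at once since a strictly convex function cannot be constant. The only differences are cosmetic: the paper proves i) by contradiction (assuming $I(v)<I(u)$ and dividing by $\|\gamma(t)-u\|$, which requires noting $\gamma(t)\neq u$), whereas you argue directly via the bound $|F(t)-F(0)|\leq \varepsilon C t$ and the symmetric application of the hypothesis to the pair $(v,u)$ --- a marginally cleaner bookkeeping of the same idea.
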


We readily see  that every (strictly) convex functional satisfies conditions a)-c) of Theorem \ref{th:abstracttheorem} (resp. with strict inequality at c)) for $\gamma(t) = (1-t)u + tv$. 
The linear structure on $X$ is not essential and can be replaced by a differential structure, which is needed 
for the notion of critical point. Rather than introducing a new notation and restating the problem on Banach or Fr\' echet manifolds, 
we formulate a consequence of Theorem 
\ref{th:abstracttheorem} for constraint minimization problems with applications to elliptic problems in mind.

\begin{corollary}\label{cor:abstractcor}
Let $X,Y$ be Banach spaces,  $I: X \rightarrow {\mathbb R}$ and $R: X \rightarrow Y$ be Fr\'echet differentiable. Suppose that $0$ is a regular value of $R$, set $S := R^{-1}(0)$, and let $A \subset S$ be a subset of critical points of $I|_S$. If for all $u,v \in A$ 
there exists $\gamma : [0,1] \to S$ satisfying a)--c) of Theorem \ref{th:abstracttheorem}, then i)--iii) from Theorem \ref{th:abstracttheorem} hold
with $I$ replaced by $I|_S$.
\end{corollary}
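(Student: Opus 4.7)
The plan is to replay the argument that proves Theorem~\ref{th:abstracttheorem}, with one non-trivial modification accounting for the constraint. As in the unconstrained case, I set $F(t) := I(\gamma(t))$ and aim to show that $F(t) - F(0) = o(t)$ as $t \to 0^+$; once this is established, convexity together with a symmetric argument in $u$ and $v$ will deliver i)--iii) with only cosmetic changes. In Theorem~\ref{th:abstracttheorem} the estimate $F(t) - F(0) = o(t)$ was an immediate consequence of $DI(u) = 0$ together with Fréchet differentiability, but here $u$ is only critical for $I|_S$, not for $I$ on the ambient space $X$, so this step must be rederived.

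To supply the estimate I would first exploit that $R \circ \gamma \equiv 0$ on $[0,1]$. Fréchet differentiability of $R$ at $u$, combined with $\|\gamma(t) - u\| \leq C t$, gives
\begin{equation*}
 DR(u)(\gamma(t) - u) = R(\gamma(t)) - R(u) - o(\|\gamma(t) - u\|) = o(t).
\end{equation*}
Next I would use that $0$ is a regular value of $R$: this makes $DR(u)\colon X \to Y$ surjective with complemented kernel, so the Lagrange multiplier theorem in Banach spaces yields $\lambda \in Y^*$ with $DI(u) = \lambda \circ DR(u)$. Combining both ingredients and using continuity of $\lambda$,
\begin{equation*}
 F(t) - F(0) = DI(u)(\gamma(t) - u) + o(t) = \lambda\bigl(DR(u)(\gamma(t) - u)\bigr) + o(t) = o(t).
\end{equation*}

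From here the argument parallels that of Theorem~\ref{th:abstracttheorem}: convexity of $F$ on $[0,1]$ forces the right derivative at $0$ to equal $0$, hence $F(t) \geq F(0)$ throughout $[0,1]$, and in particular $I(v) \geq I(u)$. Applying hypothesis a)--c) to the pair $(v,u)$ produces a path from $v$ to $u$ along which the analogous argument yields the reverse inequality; this gives i). Then $F(0) = F(1)$ together with convexity forces $F(t) \leq (1-t)F(0) + tF(1) = F(0)$, which combined with $F \geq F(0)$ gives ii); strict convexity is incompatible with constancy on a non-degenerate interval, yielding iii). The main obstacle is the step invoking the Banach-space Lagrange multiplier theorem: it is essential that the regular value hypothesis produce both surjectivity of $DR(u)$ and a topological complement of $\ker DR(u)$, so that $\lambda$ exists. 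Once this technical point is cleared, the corollary is a short reduction to the abstract theorem rather than an independent proof.
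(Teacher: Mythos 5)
Your proposal is correct and follows essentially the paper's own route: both arguments hinge on the Banach-space Lagrange multiplier theorem producing $\lambda\in Y^*$ with $I'(u)=\lambda\circ R'(u)$, after which the convexity argument of Theorem~\ref{th:abstracttheorem} goes through unchanged. The only cosmetic difference is that the paper defines $J=I-\lambda\circ R$ and uses the exact identity $J\circ\gamma=I\circ\gamma$ (since $R\circ\gamma\equiv 0$ on $S$), whereas you re-derive the same $o(t)$ estimate by expanding $R$ to first order along the Lipschitz path --- an equivalent, slightly longer computation (and note that for the existence of a continuous $\lambda$, surjectivity of $R'(u)$ plus the open mapping theorem already suffices; the splitting of $\ker R'(u)$ is not strictly needed at that step).
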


We apply Theorem \ref{th:abstracttheorem} and Corollary \ref{cor:abstractcor} 
to several pde problems with variational structure. In these applications, assumption a) is easily fulfilled, the main challenge is the construction of paths $\gamma$ satisfying b) and c). The condition b) is in fact an assumption on the parametrization $\gamma$. Actually, if b) is not satisfied 
we cannot conclude that the derivative of $t \mapsto I(\gamma(t))$ vanishes at $t=0$ even if $ \nabla I(\gamma(0)) = 0$. This is not a technical obstacle, since the uniqueness of critical points does not hold  if we require H\" older continuity in b) instead of Lipschitz continuity.  
Indeed, consider the following one-dimensional example
\begin{equation}\label{eq:odex}
I(x) = \frac{1}{3}x^3 - x - \frac{2}{3} = \frac{1}{3}(x + 1)^3 - (x + 1)^2, \qquad \gamma(t) = \sqrt{2}\sqrt{1 + t} - 1.
\end{equation}
It is easy to verify that $I$ has two critical points $\pm 1$, $\gamma(\pm 1) = \pm 1$, and 
$t \mapsto I(\gamma(t)) =  \frac{2\sqrt{2}}{3} (1 + t)^{\frac{3}{2}} - 2(1 + t)$ is a strictly convex function; however, $\gamma$ is not Lipschitz at $t = -1$. 
The verification of b) is a subtle issue and in our examples, where $X$ is a function space and critical points are solutions of certain elliptic problems, it strongly relies on comparison properties of endpoints of $\gamma$, which follow either from the Hopf lemma 
or from sharp decay estimates at infinity for solutions of the Euler-Lagrange equation. 
In this step we strongly
use that the endpoints of $\gamma$
are critical points of $I$ and not arbitrary elements of $X$. 

\smallbreak
The assumption c) has a different flavor and it heavily depends on indirect convexity properties of $I$, which are manifested by 
 sharp and delicate inequalities for arbitrary endpoints $u, v$
(not necessarily critical points).
As noted in Theorem \ref{1.1'} below, the convexity in c) can be weakened to conclude i) of Theorem \ref{th:abstracttheorem}; however, in that case
we need additional assumptions to conclude the uniqueness of critical points. 

\smallbreak

Our main results also provide novel insights to problems where uniqueness cannot be established. For instance, 
if the set $A$ contains a global minimizer of $I$, it follows from i) and ii) that all critical points 
are global minimizers. Moreover, if $A$ contains at least two points, then they are not isolated since every point on $\gamma$ is a global minimizer 
(the continuity of $\gamma$ at $t=0$ follows from  b)). Another fine application, illustrated below for the 
mean curvature operator, is the uniqueness of small solutions, even if the existence of additional large solutions is known.

\smallbreak

The idea of proving uniqueness by using a generalized convexity assumptions is not entirely new in the literature. 
The closest to our results is  \cite{BelloniKawohl}, where the key idea there can be rephrased in our setting as: There is no curve $\gamma$ 
connecting two global minimizers of $I$ such that $t \mapsto I(\gamma (t))$ is strictly convex. This is indeed true, since 
strictly convex functions on an interval attain 
a strict maximum at an endpoint, a contradiction to 
global minimality of endpoints. Although this method is presumably applicable to curves connecting strict local minima, it fails for general
 critical points, see for instance our one-dimensional example \eqref{eq:odex}. There are many 
papers applying  the ideas of \cite{BelloniKawohl} in various settings, see for instance \cite{ABGS2011,BT2007, BrascoFranzina, DellaPietra2013,PG2014, FM2004, KN2008, LL2014, Lucia2006,TV2013}, however, none contains results as 
general as Theorem \ref{th:abstracttheorem} or Corollary \ref{cor:abstractcor}.

\smallbreak

The cornerstone of \cite{BelloniKawohl} (which deals with elliptic equations involving the $p$--Laplace operator),
and an important ingredient in many of our examples, is the inequality 
\[
|\nabla \gamma (t)(x))| \leq \Big((1-t) |\nabla u(x)|^p + t |\nabla v(x)|^p\Big)^\frac{1}{p} 
\qquad \textrm{for }  u, v > 0 \,,
\]
where $\gamma(t)(x) := ({(1-t) u^p(x) + t v^p(x)})^{\frac{1}{p}}$. This inequality can be traced back to \cite{BenguriaBrezisLieb, DiazSaa} and in 
our manuscript
we prove a more general version. 
Such path $\gamma$ was used  in many papers to show the uniqueness of positive solutions for elliptic equations in mathematical physics, for instance,
\cite[Lemma A.4]{LiebSeiringerYngvason} studies a Gross-Pitaevskii energy functional and \cite[Lemma 4]{BenguriaBrezisLieb} treats a Thomas-Fermi-von Weizs\"acker functional.  Frequently, only minimizers are considered due to physical considerations, and therefore the strict convexity of $t \mapsto I(\gamma(t))$ suffices to 
prove uniqueness. 

\smallbreak
Another approach to uniqueness relies on the existence of a strict variational sub-symmetry, see \cite{Reichel}. 
Specifically, if $u_0$ is a critical point of $I$ and 
there is a family of maps $(g_\epsilon)$ with a group structure such that 
$I(g_\epsilon(u)) < I(u)$ for $u \neq u_0$, then $u_0$ is the only critical point. 
In some particular settings the abstract method of \cite{Reichel} can be interpreted as an infinitesimal version of our approach. 

\smallbreak

To show an application of Theorem \ref{th:abstracttheorem}, let us consider the following basic model problem, which already contains 
most of the important ingredients. 
Let $\Omega \subset {\mathbb R}^N$ be a regular bounded domain, $1 <  q <2$, and define the functional
on the Sobolev space $H^1_0(\Omega)$ by
\begin{gather}
I(u) := \frac{1}{2} \int_{\Omega} | \nabla u|^2 \, dx  - \frac{1}{q} \int_{\Omega} |u|^{q} \, dx \,. 
\end{gather}
It is well known that $I$ has infinitely many critical points, see for example \cite[Theorem 1 (b)]{BartschWillem}; 
however, there is only one \emph{positive} critical point of $I$, see \cite{Krasnoselskii, KellerCohen, Amann1971, Hess1977, deFigueiredo1982, BrezisOswald}.
Although $I$ is not strictly convex on the cone of positive functions of $H^1_0(\Omega)$
, we can verify the
uniqueness of positive critical points using Theorem \ref{th:abstracttheorem} with 
\[
\gamma(t) = \sqrt{(1-t) u^2 + t v^2} \,.
\]
Indeed, as $q < 2$, one obtains that $t \mapsto |\gamma(t)|^q$ is strictly concave, and therefore the second term of $t \mapsto I(\gamma(t))$ is strictly convex. The 
first term is convex by the general Lemma \ref{lem:gin} proved below. Intuitively, in our examples we employ a strong convexity property of
 the principal term to improve convexity properties 
of the nonlinear part. Observe that the critical points of $I$ are weak solutions of the Euler-Lagrange equation
\[
-\Delta u = |u|^{q-2}u \ \ \text{in} \ \ \Omega, \quad u = 0 \ \ \text{on} \ \ \partial \Omega \,,
\]
and  therefore the required local Lipschitz property follows from the comparison 
of positive solutions $u$ and $v$, see Lemma \ref{lemma:gpg} below, and Theorem \ref{th:abstracttheorem} yields the uniqueness.

\smallbreak

Our abstract results apply in far more general settings and here 
we focus on models arising for example in physics, engineering, 
and geometry.
We present simplified and unified proofs of known results and  
we also present new uniqueness results. 
Our goal is to present the main ideas and complications
in a comprehensible manner and to show the methods in a broad range of problems, rather than treating 
the most general setting or finding optimal assumptions. Our examples include equations and systems with quasilinear or nonlocal differential operators 
on bounded and unbounded domains with various boundary conditions. 

\smallbreak
In the first example we study a family of quasilinear problems
\begin{equation}\label{eq:qlpl}
-\textrm{div} (h(|\nabla u|^{p}) |\nabla u|^{p - 2} \nabla u) = g(x, u), \qquad p > 1 \,,
\end{equation} 
and under general assumptions on $h$ and $g$ we show new uniqueness results for positive solutions.
If $h \equiv 1$ the uniqueness was already established in \cite{DiazSaa}. In this case,
the left hand side reduces to the well-known $p$-Laplacian operator 
$(-\Delta_p)$, which is a nonlinear counterpart of the Laplacian ($p = 2$). It is used to model phenomena strongly characterized by nonlinear diffusion and it finds application, for example, in elasticity theory to model dilatant ($p>2$) or pseudo-plastic ($1<p<2$) materials. We show uniqueness results which in particular include sublinear \cite{DiazSaa} and Allen-Cahn-type $p$-Laplacian problems, either with Dirichlet or nonlinear boundary conditions. For the latter see \cite{Berestycki1981, Berger1979} for the particular case of $p=2$.  

\smallbreak
For $p = 2$ and $h(z) = (1 \pm z^2)^{-\frac{1}{2}}$ we obtain 
a problem involving the mean curvature operator in Eucledian or Minkowski space
\begin{align}\label{mcu:equation}
 {\mathcal M}_{\pm} u :=-\operatorname{div} \left(\frac{\nabla u}{\sqrt{1 \pm |\nabla u|^2}}\right)=g(x,u) \,. 
\end{align}

\smallbreak
The operator ${\mathcal M}_{+}$ is important in geometry. We refer to \cite{Giusti1984} for classical results on minimal surfaces and to \cite{ObersnelOmari2010,ObersnelOmariRivetti2014} for more references on boundary value problems involving this operator. It also classically appears  in the study of capillarity surfaces, see \cite{Finn1986}, and was proposed as a prototype in models of reaction processes with saturating diffusion, see \cite{KurganovRosenau2006} and the references therein. The natural functional space to look for solutions of \eqref{mcu:equation} with  ${\mathcal M}_{+}$ is the space of functions of bounded variations. We anticipate that our results lead to uniqueness of regular solutions.

\smallbreak
The operator ${\mathcal M}_{-}$ appears in the Born-Infeld electrostatic theory to include the \emph{principle of finiteness} in Maxwell's equations; see \cite{avenia,Mawhin,BartnikSimon}. Solutions to \eqref{mcu:equation} must satisfy $|\nabla u| < 1$ and can be obtained by minimization of a functional in a suitable convex subset of the Sobolev space $W^{1,\infty}(\Omega)$. Because of this, we need to formulate an auxiliary problem using truncations, which rely on fine quantitative regularity estimates. Since the set of critical points of the transformed problem might be larger than the original one, 
we need to exploit the fact that Theorem \ref{th:abstracttheorem} also applies to proper subsets of critical points.

\smallbreak
Our abstract results can also be applied to obtain new uniqueness results for nonlocal equations such as 
\begin{align*}
 (-\Delta)^s u(x) := \lim_{\varepsilon\to 0}\int_{|x-y|\geq\varepsilon} \frac{u(x)-u(y)}{|x-y|^{N+2s}}\, dy=g(x,u(x)) \,,
\end{align*}
where $s\in(0,1)$. 
The operator $(-\Delta)^s $ is often called  the (integral) fractional Laplacian and it appears as an infinitesimal generator of a L\'{e}vy process. It finds applications, for instance, in water waves models, crystal dislocations, nonlocal phase transitions, finance, flame propagation; see \cite{bucur,Hitchhiker}. The fractional Laplacian is a \emph{nonlocal} operator, since it encodes diffusion with large-distance interactions, and this nonlocality plays an essential role in the definition of the variational setting and in the application of Theorem \ref{th:abstracttheorem}. For instance, the nonlocal character of the problem requires the 
path $\gamma$ to satisfy a convexity inequality for arbitrary pairs of points in $\mathbb{R}^N$.  

\smallbreak
With respect to systems of equations we present a new proof of a known uniqueness result \cite{Dalmasso2000, montenegro} for positive solutions of Hamiltonian elliptic system
in the sublinear case
\begin{equation}\label{eq:hs}
- \Delta u = |v|^{q-1}v\,, \quad 
 -\Delta v = |u|^{p-1}u \,, \qquad 
 p, q> 0\,, \quad p\cdot q < 1 \,.
\end{equation}
These systems can be seen as a generalization of the biharmonic equation, since with $q=1$, $u$ solves the fourth-order elliptic equation
\begin{equation}
\Delta^2 u = |u|^{p-1}u\,, 
\qquad 
0 < p <1.
\end{equation}
Due to their structure, they pose many mathematical challenges, and they can be treated by using several variational frameworks, each one with its advantages and disadvantages (we refer to the survey \cite{BonheuredosSantosTavares} for more details). To treat Hamiltonian system by our methods, we  define an appropriate functional by using the dual method approach, which goes back to \cite{ClementvanderVorst}.
Then the principal part of the functional contains the inverse of the Laplacian rather than differentials, and therefore 
new convexity inequalities are needed. 
 Another obstacle is a proper choice of a path $\gamma$ and the verification of the local Lipschitz property in a multi-component setting. 

\smallbreak
We include an example involving the
quasilinear defocusing Schr\"odinger equation 

\begin{equation}\label{eq:QUASILINEAR_intro}
 -\Delta u-u\Delta u^2+ V(x) u + u^3 =\omega u 
 \end{equation}
 on both bounded domains and on $\mathbb{R}^N$, where $V$ is an appropriate potential and 
 $\omega$ is either fixed or a Lagrange multiplier when the mass (i.e. the $L^2$--norm) is fixed. 
This equation appears when looking for standing waves of a Schr\"odinger type equation, and is a particular case of a more general problem appearing in many physical phenomena such as plasma physics and fluid dynamics, or condensed matter theory, see \cite{ColinJeanjeanSquassina, PoppSchmittWang} for a detailed list of physical references.
 The main difficulty in the application of our main results when working in $\mathbb{R}^N$ is to obtain a comparison for positive weak solutions of \eqref{eq:QUASILINEAR_intro}, which is needed for the proof of 
the local Lipschitz continuity of $\gamma$. Such comparison can be derived from new sharp decay estimates, 
\[
u(x)\sim |x|^\frac{\omega-N}{2}e^{-\frac{1}{2}|x|^2} \qquad \text{ as } |x|\to \infty,
\]
that are proved for a transformed problem with a
simplified principal part.

\smallbreak
Finally, we show an application to the Gross-Pitaevskii system 
\begin{equation}\label{eq:GP_intro}
-\Delta u_i +V(x) u_i +u_i \sum_{j=1}^k \beta_{ij} u_j^2=\omega_i u_i \qquad i=1,\ldots, k
\end{equation}
which arises as a model for standing waves of Bose-Einstein Condensates \cite{Timmermans} or in Nonlinear Optics \cite{AA}. We treat the case when $\omega_1,\ldots, \omega_k$ are fixed, as well as the case when the mass of each $u_i$ is fixed, and the parameters appear as Lagrange multipliers. As in the previous example, when $\Omega=\mathbb{R}^N$ we need the following sharp decay estimates 
for positive solutions $(u_1,\ldots, u_k)$ of \eqref{eq:GP_intro},
\begin{equation}\label{eq:decay_NLSsystem}
u_i(x)\sim |x|^{\frac{\omega_i-N}{2}}e^{-\frac{1}{2}|x|^2} \qquad \text{ as } |x|\to \infty.
\end{equation}
However, additional difficulties stem from the fact that critical points of the associated functional might have some trivial components, and 
therefore are not comparable. Moreover, when the mass of the $u_i$ is fixed, the parameters $\omega_i$ may depend on the solution. In that case, the sharp decay estimates yield that positive solutions are not comparable and our method does not apply, but we include an alternative proof for completeness which also relies on \eqref{eq:decay_NLSsystem}.% 

\smallbreak
The paper is organized as follows. In Section \ref{sec:proofmaintheorem} we prove Theorem \ref{th:abstracttheorem} and Corollary \ref{cor:abstractcor}.
We collect general auxiliary statements and inequalities needed throughout of the paper, in Section \ref{sec:aux-res}. Section \ref{sec:application2order}
contains applications to second order problems and Section \ref{section:meancurvatureoperators} to mean curvature operators. 
Problems involving the fractional Laplacian are discussed in Section \ref{section:fractionalLaplacian} and the problems regarding 
Hamiltonian systems are in Section \ref{sec-HS}.
Our study of 
Schr\" odinger
equations and Gross-Pitaevskii systems can be found in Section \ref{sec:RN}.

\section{Proof of Theorem \ref{th:abstracttheorem} and Corollary \ref{cor:abstractcor}} \label{sec:proofmaintheorem}

\begin{proof}[Proof of Theorem {\rm{\ref{th:abstracttheorem}}}] \emph{i) \,} For a contradiction, suppose that there exist $u, v \in A$ such that $I(v) < I(u)$ and set $N := I(v) - I(u) <0$. Then, with $\gamma$ as in the hypotheses of this theorem, 
\begin{equation}\label{eq:00}
 I(\gamma(t)) - I(\gamma(0)) \leq (1-t) I(u) + t \, I(v) - I(u) = t \, N \qquad
   \textrm{ for all } \, t  \in (0,1)\,,
\end{equation}
 that is,
 \begin{equation}\label{eq:1proof}
 \frac{I(\gamma(t)) - I(u)}{t} \leq N <0\qquad   \textrm{ for all } 
  t \in (0,1),
 \end{equation}
and in particular $\gamma(t) \neq u$ for all $t \in (0,1)$. On the other hand, since $\gamma$ is locally Lipschitz at $0$, there exist $\delta > 0$ and $C> 0$ such that
\begin{equation}\label{eq:2proof}
 \| \gamma(t) - u \| \leq  C t \qquad   \textrm{ for all }  t \in [0,\delta].
\end{equation}
Since $I$ is Fr\' echet differentiable and $u$ is a critical point, then 
\eqref{eq:1proof} and \eqref{eq:2proof} yield
\begin{align*}
 0 = \displaystyle {\lim_{t \to 0} \frac{|I(\gamma(t)) - I(u) - I'(u)(\gamma(t) -u) |}{\| \gamma(t) -u\|}} =  \displaystyle{\lim_{t \to 0} \frac{| I(\gamma(t)) - I(u) |}{\| \gamma(t) -u \|} \geq \frac{|N|}{C} >0},
\end{align*}
which is a contradiction. Therefore, $I$ is constant on $A$ and i) follows.

\smallbreak
\noindent \emph{ii)\, } Let $j:[0,1] \to \mathbb{R}$ be defined as $j(t) = I(\gamma(t))$. For every $t\in(0,1)$ such that $\gamma(t) \neq \gamma(0)$ we can write
\[
\frac{|j(t)- j(0)|}{t}  = \frac{|I(\gamma(t)) - I(u) - I'(u)(\gamma(t) - \gamma(0)) |}{\|\gamma(t) - \gamma(0)\| } \frac{\|\gamma(t)- \gamma (0)\|}{t}
\] 
and we infer that $j'(0)=0$.  Then $j:[0,1] \to \mathbb{R}$ is convex, $j(0) =j(1)$, $j'(0) =0$, and therefore constant on $[0,1]$. Indeed,  since $j$ is convex
\[
\frac{j(h) - j(0)}{h} \leq \frac{j(t+h) - j(h)}{t}, \qquad 
\textrm{ for all } t \in (0,1), h\in (0, 1-t).
\]
Then, taking the limit as $h \to 0^+$, and using $j'(0) = 0$, we infer that $j(0)\leq j(t) \leq (1-t)j(0)+ t j(1) = j(0)$.
Part \emph{iii)\, } immediately follows from \emph{ii)\,}. 
\end{proof}

\begin{proof}[Proof of Corollary {\rm{\ref{cor:abstractcor}}}]
Recall that $u$ is a critical point of $I |_S$ if and only if there exists $\lambda$ (depending on $u$) in the dual space of $Y$ such that $u$ is a critical point of the functional $J: X \rightarrow {\mathbb R}$ defined by
\[
J = I -  \lambda\circ R.
\]
Then the proof follows by applying  the arguments in the proof of Theorem \ref{th:abstracttheorem}
to the functional $J$ and
taking into account that $J\circ \gamma = I\circ \gamma$ since $\gamma(t) \in S$ for all $t\in [0,1]$.
Observe that in the proof of Theorem \ref{th:abstracttheorem}
the fact that $v$ is a critical point is not actually needed.
\end{proof}

In case $A$ contains a local minimum of $I$, we present a similar \textemdash but non-equivalent\textemdash \vspace{5pt} version of Theorem \ref{th:abstracttheorem}. Observe that in this version $t \mapsto I(\gamma(t))$ is not assumed to be convex and that one could also state the corresponding version of Corollary \ref{cor:abstractcor} within this weaker setting. The proof follows the same arguments as in the proof of Theorem \ref{th:abstracttheorem} and it is omitted. 

\vspace{10pt}

\begin{theorem}\label{1.1'} Let $X$ be a normed space, $I: X \rightarrow {\mathbb R}$ be a Fr\'echet differentiable functional and $A \subset X$ be a nonempty subset of critical points of $I$. Suppose that given $u,v \in A$, with $u\neq v$, there exists $\gamma \in C([0,1],X)$ such that
 \begin{enumerate}[a)]
 \item $\gamma(0) = u$, $\gamma(1)=v$.
 \item $\gamma$ is locally Lipschitz at $t=0$, that is, $\|\gamma(t) - u\| \leq C t$ for each $t \in [0, \delta]$ and some $\delta > 0$.
 \item $I(\gamma(t)) \leq (1-t)I(u) + t \, I(v)$ for all $t \in (0,1)$. 
 \end{enumerate}
 Then,
 \begin{enumerate}[i)]
 \item  $I$ is constant on $A$. 
 \item if $A$ contains a local minimum $u_0$ of $I$ 
 and the strict inequality holds at condition c), 
 then $A = \{ u_0\}$. 
 \end{enumerate}
\end{theorem}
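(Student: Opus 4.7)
The plan is to follow the template of the proof of Theorem \ref{th:abstracttheorem}, carefully tracking where the strict convexity of $t \mapsto I(\gamma(t))$ was actually used there and replacing it with the weaker chord inequality c), supplemented when needed by the local-minimum hypothesis. The key observation is that the argument for Theorem \ref{th:abstracttheorem}.i) never invoked convexity at interior pairs of points; it only used the chord inequality with endpoints $0$ and $t$, which is exactly what c) provides.

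For part i), I would mirror the earlier proof essentially verbatim. Given $u,v \in A$ with $u \neq v$, assume for contradiction that $I(v) < I(u)$, and set $N := I(v) - I(u) < 0$. Hypothesis c) at endpoint $t$ gives
\[
I(\gamma(t)) - I(u) \leq t\,N \qquad \text{for all } t \in (0,1),
\]
which is precisely estimate \eqref{eq:00}. Consequently $\gamma(t) \neq u$ on $(0,1)$ and $(I(\gamma(t)) - I(u))/t \leq N < 0$. Combining this with the Lipschitz bound $\|\gamma(t) - u\| \leq C t$ from b) and using $I'(u) = 0$, Fréchet differentiability at $u$ forces
\[
0 = \lim_{t \to 0^+} \frac{|I(\gamma(t)) - I(u) - I'(u)(\gamma(t) - u)|}{\|\gamma(t) - u\|} \geq \frac{|N|}{C} > 0,
\]
a contradiction. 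The reverse inequality $I(u) < I(v)$ is excluded by swapping the roles of $u$ and $v$ and applying the path guaranteed by the hypothesis for the ordered pair $(v,u)$. This yields $I(u) = I(v)$ for every $u,v \in A$.

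For part ii), I would argue by contradiction: suppose $A \neq \{u_0\}$ and pick $v \in A \setminus \{u_0\}$. By part i), $I(v) = I(u_0)$, so the strict version of c) along the path $\gamma$ from $u_0$ to $v$ gives
\[
I(\gamma(t)) < (1-t)\, I(u_0) + t\, I(v) = I(u_0) \qquad \text{for all } t \in (0,1).
\]
Continuity of $\gamma$ at $t = 0$ (which is guaranteed by b), but here only continuity is needed) makes $\gamma(t)$ lie in any prescribed neighborhood of $u_0$ for $t$ small; this produces points arbitrarily close to $u_0$ with $I$-value strictly below $I(u_0)$, contradicting the local minimality of $u_0$.

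I expect no serious obstacle. The only subtle point worth flagging is conceptual rather than technical: in Theorem \ref{th:abstracttheorem}, part ii) was used to deduce constancy of $t \mapsto I(\gamma(t))$ via the vanishing of its right derivative at $0$ combined with convexity on $[0,1]$; that mechanism is unavailable here because c) is only a chord inequality, not full convexity. The substitute in the present statement is precisely the local-minimum assumption coupled with strict chord inequality, which bypasses the need for any derivative information at $0$ beyond what already drives part i).
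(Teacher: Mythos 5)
Your proof is correct and takes essentially the same approach the paper intends: the paper omits this proof, saying only that it follows the same arguments as Theorem \ref{th:abstracttheorem}, and your part i) reproduces that contradiction argument verbatim (correctly noting it uses only the endpoint chord inequality and criticality at $\gamma(0)$), while your part ii) supplies the natural endgame, using the strict chord inequality together with continuity of $\gamma$ at $t=0$ to produce points arbitrarily close to $u_0$ with $I$-value strictly below $I(u_0)$. No gaps to flag.
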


\vspace{10pt}
We point out that in Theorem \ref{1.1'} the existence of a local minimum is sharp in order to prove that $A$ is a singleton. Indeed, let $B \subset {\mathbb R}^N$, with $N \geq 1$, be the unit ball in ${\mathbb R}^N$ centered at the origin and consider the H\'enon equation \cite{Henon}
\begin{equation}\label{eq:henon}
-\Delta u  = |x|^{\alpha}|u|^{q-2}u \ \ \text{in} \ \ B, \quad u = 0 \ \ \text{on} \ \ \partial B,
 \end{equation}
with $\alpha>0$, $2 < q$ if $N \in \{1, 2\}$ and $2< q< \frac{2N}{N-2}$ if $N \geq3$. Then the classical solutions of \eqref{eq:henon} are critical points of 
\[
I_{\alpha}(u) = \frac{1}{2} \int_B |\nabla u|^2 \, dx  - \frac{1}{q}\int_{B} |x|^{\alpha} |u|^{q} \, dx, \quad u \in H^1_0(B).
\]
Let $\alpha > 0$ be large enough such that least energy solutions \textemdash L.E.S. for short\textemdash \vspace{5pt} of \eqref{eq:henon} are not radially symmetric; see \cite{SmetsSuwillem, ByeonWang}. Set $A$ as either
 \[
 \{ u; \, u \ \ \text{is a L.E.S. of \eqref{eq:henon}}\} \quad \text{or} \quad \{ u; \, u \ \ \text{is a L.E.S. of \eqref{eq:henon} and $u>0$ in $\Omega$}\}.
\]
Given $u, v\in A$, with $u \neq v$, consider the path
\[
\gamma (t) = \left\{ 
\begin{array}{l}
(1-2t)u,\ \  t\in[0,1/2],\vspace{5pt} \\  (2t-1) v, \ \ t \in [1/2,1].
\end{array}
\right.
\]
Taking into account the characterization of the Nehari manifold
\[
\mathcal{N}_{\alpha} = \{ u \in H^1_0(B) \backslash \{0\}; \, I_{\alpha}'(u)u = 0\},
\]
we infer that all hypotheses a)-c) of Theorem \ref{1.1'} are satisfied with the strict inequality at condition c). However, for 
$N \geq 2$, and non-radial $u^* \in A$, the set $A$ contains infinitely many critical points 
$\{ u^* \circ O; \, O \in SO(N) \} \subset A$
of $I_{\alpha}$ which are all of mountain pass type. Observe also that $t \mapsto I(\gamma(t))$ is not convex, and therefore Theorem 
\ref{th:abstracttheorem} is not violated. Also, for $v_1,v_2\in \{ u^* \circ O; \, O \in SO(N) \}$ there is clearly a path between $v_1$ and $v_2$ along which $I$ is constant, so mere convexity is in general not enough to guarantee the uniqueness in Theorem \ref{th:abstracttheorem}.

\section{Preliminary results} \label{sec:aux-res}

In this section we collect general results used throughout the paper that help to verify the assumptions of Theorem \ref{th:abstracttheorem} and Corollary 
\ref{cor:abstractcor}. Let $\Omega \subseteq \mathbb{R}^N$ be a domain (bounded or unbounded) and fix $u,v:\Omega\to \mathbb{R}$ belonging to an appropriate space $W$  specified below. We consider paths $\gamma : [0, 1] \to W$ of the form
\begin{equation}
\gamma(t)(x) := Q^{-1} ((1 - t)Q(u(x)) + tQ(v(x))), \qquad  t \in [0, 1], x \in \Omega \,,
\end{equation}
 where $Q :[0, \infty) \to \mathbb{R}$ is an increasing, and therefore invertible function.  For simplicity and without loss of generality we also assume that $Q(0)=0$.  Then we have 
\begin{equation}\label{eq:ggm}
Q(\gamma(t)) := (1 - t)Q(u) + tQ(v) \,,
\end{equation}
where we suppressed the dependence on $x$ for simplicity. The model function that satisfies all assumptions below is $Q(z) = z^p$ for $p > 1$.  

First, we provide a general  criterion for the Lipschitz continuity of $\gamma$ at $t=0$. Note that even in the
model case $Q(z) = z^{p}$ for $p>1$ we have to assume that $u$ and $v$ are comparable, that is, for some $\delta \geq 1$ we have on $\Omega$
\begin{equation}\label{eq:hipderivative2}
u, v > 0\,, \qquad 
\frac{1}{\delta} \leq \frac{u}{v} \leq \delta \,.
\end{equation} 
Otherwise if say $u \equiv 0$, then $\gamma(t) = t^{\frac{1}{p}}v$, which is not a locally Lipschitz function at $t = 0$. This comparability is
assumed to be preserved by $Q$ as specified in the following lemma.

\begin{lemma}\label{lemma:gpg}
Let $W$ stand for either $L^p(\Omega)$, $W^{1,p}_0(\Omega)$, or $W^{1,p}(\Omega)$ with $p \geq 1$. Fix $Q \in C([0, \infty)) \cap C^1((0, \infty))$
such that 
\begin{itemize}
\item[a)] $Q' > 0$ on $(0, \infty)$,
\item[b)] for each $c_0 > 0$ there is $c_2 > 0$ such that 
\begin{equation}
\frac{1}{c_2} \leq \frac{Q'(z_1)}{Q'(z_2)} \leq c_2, \qquad 
\textrm{whenever } \frac{1}{c_0} \leq \frac{z_1}{z_2} \leq c_0 \,, \quad z_1, z_2 > 0 \,.
\end{equation}
\end{itemize}
If  $u, v \in W$ satisfy \eqref{eq:hipderivative2} for some $\delta \geq 1$, 
then $\gamma:[0,1] \to W$ defined by \eqref{eq:ggm} is locally Lipschitz at $t = 0$, provided $W = L^p(\Omega)$. If in addition 
\begin{itemize}
\item[c)] $Q \in C^2((0, \infty))$ and there is $\delta_1 > 1$ and $c_3>0$ such that 
\begin{equation}
\qquad \frac{1}{c_3} \leq \frac{Q''(z_1)}{Q''(z_2)} \qquad \textrm{whenever} \quad 1 \leq \frac{z_1}{z_2} \leq \delta_1 \,,
\end{equation}
then $\gamma:[0,1]\to W$ is locally Lipschitz at $t = 0$, for any choice of $W$.
\end{itemize}

\begin{remark}
From the proof of the Lemma \ref{lemma:gpg} immediately follows
that the statement holds true for weighted Lebesgue and Sobolev 
spaces. 
\end{remark}
\end{lemma}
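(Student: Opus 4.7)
The strategy is to reduce everything to pointwise bounds on $|\gamma(t)(x) - u(x)|$ and, in the Sobolev case, on $|\nabla\gamma(t)(x) - \nabla u(x)|$, and then integrate. The essential preparatory observation, used throughout, is that because $Q$ is increasing by (a) and $(1-t)Q(u) + tQ(v)$ is a convex combination, $\gamma(t)(x)$ lies pointwise between $u(x)$ and $v(x)$. Combined with \eqref{eq:hipderivative2} this implies $1/\delta \leq \gamma(t)(x)/u(x) \leq \delta$ uniformly in $(t,x)$, and hence by (b) the values $Q'(u)$, $Q'(v)$ and $Q'(\gamma(t))$ are pointwise comparable with constants depending only on $\delta$. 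I would also record the elementary bound $|v - u| \leq (\delta - 1)u$.

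For the $L^p$ case, differentiating \eqref{eq:ggm} in $t$ and applying the chain rule gives
\[
\partial_t\gamma(t) \,=\, \frac{Q(v) - Q(u)}{Q'(\gamma(t))}\,.
\]
Writing $Q(v) - Q(u) = Q'(\eta)(v - u)$ via the mean value theorem with $\eta$ between $u$ and $v$, and invoking the comparability of $Q'$ at $\eta$ and at $\gamma(t)$ from (b), I obtain $|\partial_t\gamma(t)| \leq C\,|v - u|$ pointwise. Integrating in $t$ and applying Minkowski's inequality then yields $\|\gamma(t) - u\|_{L^p} \leq Ct\,\|v - u\|_{L^p}$, which handles $W = L^p(\Omega)$.

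For the Sobolev statements, the chain rule gives
\[
\nabla\gamma(t) \,=\, \frac{(1-t)\,Q'(u)\,\nabla u + t\,Q'(v)\,\nabla v}{Q'(\gamma(t))}\,,
\]
which I would rearrange as
\[
\nabla\gamma(t) - \nabla u \,=\, t\,\frac{Q'(v)\,\nabla v - Q'(u)\,\nabla u}{Q'(\gamma(t))} \,+\, \frac{Q'(u) - Q'(\gamma(t))}{Q'(\gamma(t))}\,\nabla u\,.
\]
The first summand is pointwise dominated by $Ct(|\nabla u| + |\nabla v|)$ thanks to (b), so its $L^p$-norm is $O(t)$.

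The heart of the argument, and the reason hypothesis (c) is needed, is the second summand. I would estimate $|Q'(u) - Q'(\gamma(t))|$ by the integral of $Q''$ between $u$ and $\gamma(t)$, restricting to $t$ small enough that the pointwise ratio $\gamma(t)/u$ lies in $[1/\delta_1, \delta_1]$; this restriction is legitimate because the $L^p$ step together with $|v-u|\le(\delta-1)u$ gives the pointwise bound $|\gamma(t)/u-1|\le Ct(\delta-1)$. On such an interval, condition (c) controls $Q''$ uniformly in terms of $Q''(u)$, producing
\[
\Big|\frac{Q'(u) - Q'(\gamma(t))}{Q'(\gamma(t))}\Big|\,|\nabla u| \,\leq\, C\,t\,\frac{Q''(u)\,|v-u|}{Q'(u)}\,|\nabla u| \,\leq\, C\,t\,\frac{z\,Q''(z)}{Q'(z)}\Big|_{z=u}\,|\nabla u|\,.
\]
The final ingredient is the pointwise boundedness of the dimensionless quantity $z\,Q''(z)/Q'(z)$; this I would derive by integrating $Q''/Q' = (\log Q')'$ over the fixed-ratio interval $[z/\delta_1, z]$ (the integral being bounded by $\log c_2$ via (b)) and then using (c) to transfer the resulting mean-value estimate from an intermediate point back to $z$. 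Putting everything together gives $\|\nabla\gamma(t) - \nabla u\|_{L^p} = O(t)$, completing the argument for $W^{1,p}(\Omega)$ and $W^{1,p}_0(\Omega)$. I expect the bound on $zQ''(z)/Q'(z)$ from (b) and (c) to be the main technical obstacle, since the rest is mean-value calculus.
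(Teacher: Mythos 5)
Your proof follows the paper's own architecture: the same mean-value/comparability argument for the $L^p$ part (yours via $\partial_t\gamma=(Q(v)-Q(u))/Q'(\gamma(t))$, the paper's via finite differences --- equivalent), the same splitting of $Q'(\gamma(t))(\nabla\gamma(t)-\nabla u)$ into the two summands, and the same key dimensionless inequality $z\,|Q''(z)|/Q'(z)\le C$, which is exactly the paper's \eqref{eq:scd}. The genuine gap is that both of your invocations of hypothesis c) run in the wrong direction. Hypothesis c) is one-sided: it asserts only the lower bound $\frac{1}{c_3}\le\frac{Q''(z_1)}{Q''(z_2)}$ for $1\le z_1/z_2\le\delta_1$, so it bounds $|Q''|$ at the \emph{smaller} of two comparable points by $c_3$ times its value at the \emph{larger} one, never the reverse. (i) In deriving the dimensionless bound you integrate $(\log Q')'$ over $[z/\delta_1,z]$ and then ``transfer from an intermediate point back to $z$''; but the mean-value point $w$ satisfies $w\le z$, and c) then gives $|Q''(w)|\le c_3|Q''(z)|$, not the needed $|Q''(z)|\le c_3|Q''(w)|$. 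You must work on $[z,\delta_1 z]$ instead (equivalently, as the paper does, apply the mean value theorem to $Q'(\delta_1 z)-Q'(z)$); then $w\ge z$ and the transfer is licit. (ii) Your claim that c) controls $Q''$ on the segment between $u$ and $\gamma(t)$ ``uniformly in terms of $Q''(u)$'' fails wherever $\gamma(t)>u$, since the segment then lies above $u$. Indeed the displayed chain through $Q''(u)$ is false in general: take $Q''(z)=\bigl((z-1)^+\bigr)^2$, which is compatible with a)--c) (with $c_3=1$ wherever the ratio in c) is defined, since $Q''$ is nondecreasing); at points $x$ with $u(x)\le 1<\gamma(t)(x)$ your right-hand side vanishes while the left-hand side does not.

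Both defects are repaired by the paper's actual maneuver, which stays entirely within your scheme: write $Q'(u)-Q'(\gamma(t))=Q''(\zeta)\,(u-\gamma(t))$ by the mean value theorem and evaluate the dimensionless bound \emph{at $\zeta$ itself} --- it holds at every $z>0$, so no transfer via c) is needed at this stage --- then finish with the b)-comparability of $Q'(\zeta)$, $Q'(u)$, $Q'(\gamma(t))$ together with $|v-u|\le(\delta-1)u$, which you already recorded. Incidentally this also makes your restriction to small $t$ superfluous (though harmless, since local Lipschitz continuity at $t=0$ only concerns small $t$): once the bound is used at the intermediate point, the ratio $\gamma(t)/u$ never needs to lie in $[1/\delta_1,\delta_1]$. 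Your $L^p$ argument and the estimate of the first summand by $Ct(|\nabla u|+|\nabla v|)$ are correct as written.
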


\begin{proof} 
To simplify the notation we drop the dependence of functions on $x$.
Clearly $\gamma(0) = u$, $\gamma(1)=v$ and, since $Q$ is increasing so is $Q^{-1}$ and we have 
\begin{equation}
\min\{u,v\} \leq \gamma(t) \leq \max\{u,v\} \,, \quad \textrm{ for all } \, t \in [0,1].
\end{equation}
Let us prove that $\|\gamma(t) - u \|_W \leq C t$ for $t \in [0, 1]$.
By the defintion of $\gamma$ we have (whenever $u(x) \neq v(x)$ and $t \neq 0$)
\begin{equation}\label{eq:beq}
Q(v) - Q(u) =  \frac{Q(\gamma(t)) - Q(u)}{t} = 
\frac{Q(\gamma(t)) - Q(u)}{\gamma(t) - u}
 \frac{\gamma(t) - u}{t} \,,
\end{equation}
and by the Mean-value Theorem
\begin{equation} \label{eq:dfxe}
Q(v) - Q(u) = Q'(\xi) (v - u), \qquad 
\frac{Q(\gamma(t)) - Q(u)}{\gamma(t) - u} = Q'(\eta) \,,
\end{equation}
where $\xi$ is pointwise between $u$ and $v$, and $\eta$ between $u$ and $\gamma(t)$. In particular,
\[
\frac{1}{\delta} \leq \frac{\min\{u,v\}}{\max\{u,v\}} \leq \frac{\xi}{\eta} \leq \frac{\max\{u,v\}}{\min\{u,v\}} \leq \delta.
\]
Thus from
\begin{equation}\label{eq:fpr}
\frac{\gamma(t) - u}{t} = \frac{Q'(\xi)}{Q'(\eta)} (v - u) \,,
\end{equation}
and b), we have $\|\gamma(t) - u\|_{L^p(\Omega)} \leq C t$, whenever $u, v \in L^p (\Omega)$. 

To treat the Sobolev spaces, first observe from b), c), $Q' > 0$, and the Mean-value Theorem 
that one has, for $z_1>0$,
\begin{equation}
c_2 - 1 \geq \frac{|Q'(\delta_1 z_1) - Q'(z_1)|}{Q'(z_1)} = (\delta_1 - 1) \frac{|Q''(w)|}{Q'(z_1)} z_1 = 
(\delta_1 - 1) \frac{Q''(w)}{Q''(z_1)} \frac{|Q''(z_1)|}{Q'(z_1)} z_1 
\geq  \frac{\delta_1 - 1}{c_3} \frac{|Q''(z_1)|}{Q'(z_1)} z_1 \,,
\end{equation}
where the last inequality holds since $z_1 < w < \delta_1 z_1$. Consequently for each $z_1 > 0$
\begin{equation}\label{eq:scd}
 \frac{|Q''(z_1)|}{Q'(z_1)} z_1 \leq C \,.
\end{equation}
After differentiating \eqref{eq:ggm} we have that
\begin{equation}
Q'(\gamma(t)) (\nabla \gamma(t) - \nabla u)
= (Q'(u) - Q'(\gamma(t))) \nabla u + t(Q'(v) \nabla v - Q'(u)\nabla u) =: T_1 + T_2\,.
\end{equation}
Since $\gamma(t)$ and $u$, $v$ are comparable, by the Mean-value Theorem, b), \eqref{eq:fpr}, and \eqref{eq:scd} we obtain 
\begin{equation}
\frac{|T_1|}{t Q'(\gamma(t))} \leq \frac{|Q''(\zeta)|}{Q'(\gamma(t))} \frac{|u - \gamma(t)|}{t}|\nabla u|
\leq C \frac{Q'(\zeta)}{Q'(\gamma(t))} \frac{Q'(\xi)}{Q'(\eta)} \frac{|u - v|}{\zeta} |\nabla u| \leq C  |\nabla u| \,,  
\end{equation}
where in the last inequality we used that $\zeta$ lies between $u$ and $\gamma(t)$, and therefore $\gamma(t)$, $\zeta$, $\xi$, and $\eta$ are all mutually comparable in the sense of \eqref{eq:hipderivative2}.
Finally, 
\begin{equation}
\frac{|T_2|}{t Q'(\gamma(t))} \leq C (|\nabla u| + |\nabla v|)
\end{equation}
follows immediately from b). In conclusion, we have
\[
\frac{|\nabla \gamma(t)-\nabla u|}{t}\leq \kappa (|\nabla u|+|\nabla v|)
\] 
and the local Lipschitz continuity of $\gamma$ at $t=0$ follows when $W$ is either $W^{1,p}(\Omega)$ or $W^{1,p}_0(\Omega)$.
\end{proof}

An immediate application of the previous lemma is the following.

\begin{corollary}\label{corollary_of_lemma:gpg}
Let $W$ stand for either $L^p(\Omega)$, $W^{1,p}_0(\Omega)$, or $W^{1,p}(\Omega)$ with $p \geq 1$ and take $r >1$. If  $u$ and $v$ satisfy \eqref{eq:hipderivative2} for some $\delta \geq 1$, then the path $\gamma:[0,1]\to W$ defined by
\[
\gamma(t)=((1-t)u^r+tv^r)^{\frac{1}{r}}
\]
is locally Lipschitz at $t=0$.
\end{corollary}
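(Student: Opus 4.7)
The plan is to realize that this corollary is simply an instance of Lemma \ref{lemma:gpg} applied to the specific choice $Q(z) = z^r$. Since $Q$ is defined on $[0,\infty)$, takes value $0$ at $0$, and its inverse is $Q^{-1}(y) = y^{1/r}$, the path given here is exactly of the form $\gamma(t) = Q^{-1}((1-t)Q(u) + tQ(v))$ required in \eqref{eq:ggm}. Therefore, once we check that $Q(z)=z^r$ (for $r > 1$) satisfies the hypotheses a), b), c) of Lemma \ref{lemma:gpg}, the result follows for any of the listed function spaces $W$.

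To verify these hypotheses, I would simply compute derivatives. First, $Q \in C([0,\infty)) \cap C^\infty((0,\infty))$ and $Q'(z) = r z^{r-1} > 0$ for $z>0$, which gives a). For b), the ratio
\[
\frac{Q'(z_1)}{Q'(z_2)} = \left(\frac{z_1}{z_2}\right)^{r-1}
\]
is controlled above and below by $c_0^{r-1}$ and $c_0^{-(r-1)}$ whenever $1/c_0 \leq z_1/z_2 \leq c_0$, so the choice $c_2 = c_0^{r-1}$ works. Finally, $Q''(z) = r(r-1)z^{r-2} \in C((0,\infty))$, so the regularity part of c) is automatic. The quantitative estimate
\[
\frac{Q''(z_1)}{Q''(z_2)} = \left(\frac{z_1}{z_2}\right)^{r-2}
\]
is either $\geq 1$ when $r \geq 2$, or bounded below by $\delta_1^{r-2}$ when $1 < r < 2$, for any $z_1/z_2 \in [1,\delta_1]$; thus $c_3 = \max\{1,\delta_1^{2-r}\}$ works uniformly. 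Fixing any $\delta_1>1$ (say $\delta_1 = 2$) completes the verification.

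The main obstacle is essentially nonexistent: the corollary is a routine specialization of Lemma \ref{lemma:gpg}. The only care required is to handle the two regimes $r \geq 2$ and $1 < r < 2$ separately when bounding the ratio of second derivatives, since the exponent $r-2$ changes sign, but this is a one-line check. After the three assumptions are verified, Lemma \ref{lemma:gpg} directly yields that $\gamma$ is locally Lipschitz at $t=0$ in $L^p(\Omega)$, $W^{1,p}_0(\Omega)$, and $W^{1,p}(\Omega)$ whenever $u$ and $v$ satisfy \eqref{eq:hipderivative2}.
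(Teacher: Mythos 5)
Your proposal is correct and is exactly the paper's argument: the paper's proof reads, in full, ``Just apply the previous lemma to $Q(z)=z^r$, with $r>1$,'' and you have simply made the routine verification of hypotheses a)--c) of Lemma \ref{lemma:gpg} explicit. Your case split at $r=2$ for the ratio $Q''(z_1)/Q''(z_2)=(z_1/z_2)^{r-2}$, with $c_3=\max\{1,\delta_1^{2-r}\}$, is the right one-line check (and note $Q''>0$ for $r>1$, so the sign issue in the lemma's use of $|Q''|$ never arises).
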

\begin{proof}
Just apply the previous lemma to $Q(z)=z^r$, with $r > 1$.
\end{proof}

Note that if $u$ and $v$ satisfy $0 < c \leq u, v < C$ in $\overline{\Omega}$, then \eqref{eq:hipderivative2} is clearly satisfied for $\delta = C/c$. 
If $u$ and $v$ attain zero Dirichlet boundary conditions, we have the following well known lemma, whose assumptions are usually checked with 
the help of Hopf's Lemma.

\begin{lemma}\label{lemma:gradientboundarycomparison}
Let $\Omega \subset {\mathbb R}^N$, $N\geq 1$, be a bounded smooth domain. Suppose that $u, v\in C^1(\overline{\Omega})$ satisfy
\begin{enumerate}[a)]
 \item $u, v > 0$ in $\Omega$ and $u=v =0$ on $\partial \Omega$; \vspace{5pt}
 \item $\displaystyle{\frac{\partial u}{\partial \nu} < 0}$ and $\displaystyle{\frac{\partial v}{\partial \nu} < 0}$ on $\partial \Omega$.
\end{enumerate}
Then there exists $\delta \geq1$ such that $\delta^{-1} v <u < \delta v$ in $\Omega$.
\end{lemma}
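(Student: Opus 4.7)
The plan is to split $\overline{\Omega}$ into a tubular neighborhood of $\partial\Omega$ and a compact interior set, and to control the ratio $u/v$ separately on each piece. On the interior, compactness and strict positivity give the bound immediately; near the boundary, the Hopf-type hypothesis b) lets us compare both $u$ and $v$ with the distance function $d(x):=\operatorname{dist}(x,\partial\Omega)$, and the comparability of $u$ with $v$ then follows.

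More concretely, first I would use the smoothness of $\partial\Omega$ to fix $\varepsilon>0$ so small that each point $x$ in the tubular neighborhood $\mathcal{U}_\varepsilon:=\{x\in\overline{\Omega}\,:\,d(x)<\varepsilon\}$ admits a unique nearest-point projection $x_0\in\partial\Omega$, and can be written as $x=x_0-d(x)\nu(x_0)$. By compactness of $\partial\Omega$ and continuity of $\nabla u,\nabla v$, condition b) provides $\eta>0$ with $\partial_\nu u\leq -\eta$ and $\partial_\nu v\leq -\eta$ on $\partial\Omega$; shrinking $\varepsilon$ if necessary, the same inequality (with $\eta/2$) holds along the inward normal segments throughout $\mathcal{U}_\varepsilon$.

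Second, for $x\in\mathcal{U}_\varepsilon$ I would apply the fundamental theorem of calculus along the normal segment from $x_0$ to $x$, using $u(x_0)=0$, to obtain
\begin{equation}
u(x)=-\int_0^{d(x)}\partial_\nu u\bigl(x_0-\tau\nu(x_0)\bigr)\,d\tau\geq \tfrac{\eta}{2}\,d(x),
\end{equation}
together with the upper bound $u(x)\leq \|\nabla u\|_\infty\, d(x)$, and likewise for $v$. Dividing these estimates yields two-sided bounds on $u/v$ in $\mathcal{U}_\varepsilon$ in terms of $\eta$ and $\|\nabla u\|_\infty,\|\nabla v\|_\infty$.

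Third, on the compact set $K:=\overline{\Omega}\setminus\mathcal{U}_\varepsilon$ both $u$ and $v$ are continuous and strictly positive, so $\min_K u,\min_K v>0$ and $\max_K u,\max_K v<\infty$, yielding a two-sided bound on $u/v$ there as well. Choosing $\delta$ larger than the reciprocal of the minimum and larger than the maximum of the ratios obtained on the two regions gives the conclusion. The only slightly delicate point is the validity of the tubular-neighborhood parametrization and the $C^1$ character of the nearest-point projection, but this is standard for smooth bounded domains and requires no additional input beyond the regularity of $\partial\Omega$.
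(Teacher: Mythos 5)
Your proof is correct: the paper itself omits the argument, stating only that it is ``standard,'' and your tubular-neighborhood decomposition---uniform negativity $\partial_\nu u,\partial_\nu v\leq-\eta$ by compactness, the two-sided bounds $\tfrac{\eta}{2}d(x)\leq u(x)\leq\|\nabla u\|_\infty d(x)$ near $\partial\Omega$ via the fundamental theorem of calculus along normal segments, and positivity on the compact interior---is precisely the standard proof being invoked. All the genuinely delicate points (uniformity of $\eta$, persistence of the gradient bound along the normal fibers after shrinking $\varepsilon$, and the well-posedness of the nearest-point projection for a smooth bounded domain) are correctly identified and handled.
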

\begin{proof}
The proof is standard and hence omitted.
\end{proof}

Next, we turn our attention to the assumption c) of Theorem \ref{th:abstracttheorem} for paths of type \eqref{eq:ggm}.
In our examples, the principal part of the functional $I$ usually has the form $\int_{\Omega} M(|\nabla u|)$ and the following result
proves its convexity.

\begin{lemma}\label{lem:gin}
Let $u, v \in W^{1, \infty}(\Omega) \cap W$ with $u, v > 0$ in $\Omega$ and let {$Q,M\in C([0, \infty)) \cap C^1((0, \infty))$} such that $Q(0)=0$ and $Q', M' > 0$ on $(0,\infty)$. Let $\gamma(t)=Q^{-1}((1-t)Q(u)+tQ(v))$ and denote
\begin{equation}
F_1 := Q'\circ Q^{-1}, \qquad F_2 := M^{-1} \,.
\end{equation}
If for some $\Gamma \in (M(0),\infty]$ the function $F : (z_1, z_2)  \mapsto F_1(z_1)F_2(z_2)$ is concave on $(0, \infty) \times (M(0), \Gamma)$, then 
for each $|\nabla u|, |\nabla v| \in M^{-1}([M(0), \Gamma])$
we have a pointwise inequality
\begin{equation} \label{eq:beqq}
M(|\nabla \gamma(t)|) \leq (1 - t)M(|\nabla u|) + t M(|\nabla v|), \quad \textrm{ for all } \, t\in[0,1]
\end{equation}
and $t \mapsto M(|\nabla \gamma (t,\cdot)|)$ is convex. If $F$ is strictly concave on $(0, \infty) \times (M(0), \Gamma)$, then 
for each $t \in (0, 1)$,
$|\nabla u| \in M^{-1}([M(0), \Gamma])$, and $|\nabla v| \in M^{-1}((M(0), \Gamma))$, the strict inequality holds in \eqref{eq:beqq}, and  
$t \mapsto M(|\nabla \gamma (t,\cdot)|)$ is strictly convex.
\end{lemma}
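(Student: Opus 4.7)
The plan is to turn \eqref{eq:beqq} into an algebraic concavity statement via a short chain of reductions starting from \eqref{eq:ggm}. First, I would differentiate \eqref{eq:ggm} in $x$ to obtain
\[
Q'(\gamma(t))\nabla\gamma(t) = (1-t)Q'(u)\nabla u + tQ'(v)\nabla v,
\]
and apply the triangle inequality together with $Q' > 0$ to get the pointwise bound
\[
|\nabla\gamma(t)| \leq \frac{(1-t)Q'(u)|\nabla u| + tQ'(v)|\nabla v|}{Q'(\gamma(t))}.
\]
Since $M$ is strictly increasing, \eqref{eq:beqq} is equivalent to $|\nabla\gamma(t)| \leq F_2((1-t)M(|\nabla u|) + tM(|\nabla v|))$, so it suffices to dominate the triangle-inequality bound by $F_2((1-t)M(|\nabla u|) + tM(|\nabla v|))$.

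The key observation is then a bookkeeping one. Using $Q'(\gamma(t)) = F_1((1-t)Q(u)+tQ(v))$, $Q'(u)=F_1(Q(u))$, $|\nabla u|=F_2(M(|\nabla u|))$ together with the analogous identities for $v$, the required domination rewrites as
\[
(1-t)F_1(Q(u))F_2(M(|\nabla u|)) + tF_1(Q(v))F_2(M(|\nabla v|)) \leq F_1\bigl((1-t)Q(u)+tQ(v)\bigr)F_2\bigl((1-t)M(|\nabla u|)+tM(|\nabla v|)\bigr),
\]
which is precisely the concavity of $F = F_1 F_2$ at the pair $(Q(u),M(|\nabla u|))$, $(Q(v),M(|\nabla v|))$. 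Both points lie in the closed rectangle $(0,\infty)\times[M(0),\Gamma]$ by hypothesis, and concavity on the open rectangle extends to the closure by continuity of $F_1$ and $F_2$; applying $M$ to both sides of the resulting bound on $|\nabla\gamma(t)|$ yields \eqref{eq:beqq}. In the strict case, the stronger hypothesis $|\nabla v|\in M^{-1}((M(0),\Gamma))$ places the second point in the interior, so strict concavity forces strict inequality in \eqref{eq:beqq} whenever the two points are distinct.

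For the convexity of $t\mapsto M(|\nabla\gamma(t,x)|)$ as a real function, rather than merely the two-point inequality, I would invoke the semigroup identity
\[
Q(\gamma((1-\lambda)t_1+\lambda t_2)) = (1-\lambda)Q(\gamma(t_1)) + \lambda Q(\gamma(t_2)), \qquad t_1,t_2\in[0,1],\ \lambda\in[0,1],
\]
which is immediate from \eqref{eq:ggm}. Consequently the restriction of $\gamma$ to any subinterval $[t_1,t_2]$ is itself a path of the same type, with endpoints $\gamma(t_1)$ and $\gamma(t_2)$; the bound \eqref{eq:beqq} already guarantees $|\nabla\gamma(t,x)|\in M^{-1}([M(0),\Gamma])$ so the hypotheses persist, and the two-point inequality applied to these new endpoints gives convexity (resp.\ strict convexity) in $t$. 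I anticipate no genuine difficulty beyond the conceptual step in the preceding paragraph: once one recognizes that the hypothesis on the product $F_1 F_2$ is exactly the algebraic content of the triangle-inequality bound written in the natural variables $(Q(\cdot),M(|\nabla\cdot|))$, the rest is rewriting, together with a routine continuity argument to cover the boundary value $|\nabla u(x)|=0$.
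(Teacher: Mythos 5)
Your proposal is correct and follows essentially the same route as the paper's proof: differentiate \eqref{eq:ggm}, apply the triangle inequality, rewrite the resulting bound as the concavity of $F=F_1F_2$ at the points $(Q(u),M(|\nabla u|))$ and $(Q(v),M(|\nabla v|))$ (extended to the closed rectangle by continuity), and obtain convexity in $t$ from the reparametrization identity $\gamma_{uv}((1-\theta)t_1+\theta t_2)=\gamma_{U_1U_2}(\theta)$, which is exactly \eqref{eq:globalargument}. Your treatment of the strict case (placing one point in the open rectangle and invoking strict concavity for distinct points) matches the paper's argument, which merely spells out the same point via the observation that equality would force $F$ to be linear along a segment lying, up to an endpoint, in the open rectangle.
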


\begin{proof} 
By differentiating \eqref{eq:ggm} we have 
\begin{equation}
Q'(\gamma(t)) \nabla \gamma(t) = (1- t)Q'(u) \nabla u + tQ'(v) \nabla v \,,
\end{equation}
and consequently, by using triangle inequality and the invertibility of $Q$ and $M$ 
\begin{align}\label{eq:tiq}
Q'(\gamma(t)) |\nabla \gamma(t)| &\leq (1 - t) Q'(u) |\nabla u| + t Q'(v) |\nabla v| \\
&= (1 - t) Q'\circ Q^{-1}\circ Q(u) M^{-1}\circ M(|\nabla u|) + 
t Q'\circ Q^{-1}\circ Q(v) M^{-1}\circ M(|\nabla v|) \\
&= (1 - t)F_1 (Q(u)) F_2(M(|\nabla u|)) + t F_1 (Q(v)) F_2(M(|\nabla v|)) \,.
\end{align}
Since $F$ is concave on $(0, \infty) \times (M(0), \Gamma)$, it is concave on $(0, \infty) \times [M(0), \Gamma]$ and 
for any $|\nabla u|, |\nabla v| \in M^{-1}([M(0), \Gamma])$ we have 
\begin{align}\label{eq:cnvx}
Q'(\gamma(t)) |\nabla \gamma(t)| &\leq F_1((1 - t) Q(u) + t Q(v)) F_2((1 - t)M(|\nabla u|) + t M(|\nabla v|))\\
&= F_1 (Q(\gamma(t))) F_2 ((1 - t)M(|\nabla u|) + t M(|\nabla v|)) \\ 
&= Q'(\gamma(t))  M^{-1}((1 - t)M(|\nabla u|) + t M(|\nabla v|)) 
\end{align}
and \eqref{eq:beqq} follows. Note that \eqref{eq:beqq} also implies that $|\nabla \gamma(t)| \in M^{-1}([M(0),\Gamma])$ for each $t \in[0,1]$.

To prove the convexity of $t \mapsto M(|\nabla \gamma (t)|)$,
fix $t_1, t_2, \theta \in [0,1]$ and set $\gamma =: \gamma_{uv}$ to emphasize its dependence on the endpoints $u$ and $v$. 
Then it is easy to verify that 
\begin{equation}\label{eq:globalargument}
\gamma_{uv}((1-\theta)t_1 + \theta t_2) = \gamma_{U_1U_2}(\theta),\text{ where } U_i \text{ is defined by } Q(U_i) = (1-t_i)Q(u) + t_i Q(v),\ i = 1, 2.
\end{equation} Then, by \eqref{eq:beqq} applied to $\gamma_{U_1U_2}$
\begin{align}
M(|\nabla \gamma_{uv}((1-\theta)t_1 + \theta t_2) |) &= M(|\nabla \gamma_{U_1U_2}(\theta)|) 
\leq (1 - \theta) M(|\nabla \gamma_{U_1U_2}(0)|) + \theta M(|\nabla \gamma_{U_1U_2}(1)|) 
\\
&= (1 - \theta) M(|\nabla \gamma_{uv}(t_1)|) + \theta M(|\nabla \gamma_{uv}(t_2)|) 
\,,
\end{align}
and the convexity follows. 

To prove the strict convexity, first observe that for each $t \in(0,1)$
\begin{equation}\label{eq:acvx}
(1 - t) F(z) + t F(\bar{z}) <  F((1 - t) z + t \bar{z}) \,,
\end{equation}
where $z \in (0, \infty) \times [M(0), \Gamma]$ and $\bar{z} \in (0, \infty) \times (M(0), \Gamma)$. Indeed, if not, then  
there exist $t_0 \in (0, 1)$ such that equality holds in \eqref{eq:acvx}
with $t$ replaced by $t_0$. Then, by the concavity of $F$, we obtain that the equality holds in \eqref{eq:acvx} for each 
$t \in [0, 1]$, and therefore $F$ is linear along the segment connecting $z$ and $\bar{z}$. Since such segment (except one of the endpoints) lies in 
$(0, \infty) \times (M(0), \Gamma)$, we obtain a contradiction to the strict concavity of $F$. 

Finally, the strict inequality in \eqref{eq:beqq} is a consequence of the strict concavity in \eqref{eq:cnvx}, and strict convexity of 
$t \mapsto M(|\nabla \gamma(t, \cdot)|)$ follows as above. 
\end{proof}

\begin{remark}
To our best knowledge, a special case of the following lemma with $Q(z) = M(z) = z^2$
first appeared in  \cite{BenguriaBrezisLieb}. 
The case $Q(z) = M(z) = z^p$ was treated in \cite[Lemma 1]{DiazSaa}, 
see also \cite{BelloniKawohl} and \cite[Chapter 2]{Reichel}. Note that our general results 
require a completely different proof based on concavity, which is in a sense optimal; see Remark \ref{rmk:opt} below. At Section {\rm{\ref{subsec:quasilinear}}} we will consider the case $M(z) = z^2$ and $Q(z) = f^2$ where $f$ is the odd function such that
\[
f'(t)=\frac{1}{\sqrt{1+2f^2(t)}} \quad \text{in } (0,\infty), \quad f(0)=0.
\]
We work with classical derivatives to avoid methods of Orlicz spaces. However, the arguments hold true whenever 
 the expressions are defined, cf. proof of Lemma {\rm{\ref{lemma:strictconvexity}}} below for the discussion on weak derivatives. 
\end{remark}

\begin{remark}\label{rmk:opt}
Our assumptions are in some sense optimal, since if $F$ is not concave, we obtain an opposite inequality in \eqref{eq:cnvx} at some points. 
Besides the triangle inequality, this is the only estimate used in the proof, and therefore \eqref{eq:beqq} is not expected
to hold true in general. 
\end{remark}

\begin{remark}\label{remark:concavity}
To verify the concavity of the function $(z_1, z_2) \mapsto F_1(z_1)F_2(z_2)$, since $F_1,F_2\geq 0$,
 one needs both $F_1$ and $F_2$ to be concave and 
the determinant of the Hessian matrix to be non-negative, that is, 
\begin{equation}
F_1F_1''F_2F_2'' \geq (F_1'F_2')^2 \,,
\end{equation}
where $F_i$ depends on $z_i$. If  $F_i'$ does not vanish we require
\begin{equation} \label{eq:opp}
1 \leq \frac{F_1F_1''}{(F_1')^2}\frac{F_2F_2''}{(F_2')^2} = \left(\left(\frac{F_1}{F_1'}\right)' - 1 \right) \left(\left(\frac{F_2}{F_2'}\right)' - 1 \right) \,,
\end{equation}
where the first parenthesis depends only on $z_1$ whereas the second one depends only on $z_2$. Recall that $F_2 := M^{-1}$ is given by the problem, being associated to the principal part of the functional. 

As such \eqref{eq:opp} provides partially optimal sufficient conditions on $Q$. Indeed, if for given $M$  
the second parenthesis changes sign, there is no no-trivial $Q$ yielding the desired convexity. 
However, if for example the second parenthesis is bounded from below by $\frac{1}{c_0} > 0$, then 
we can explicitly solve the differential inequality to obtain
\begin{equation}
F_1 \leq c_2 ( (c_0 + 1)z_1 + c_1)^\frac{1}{c_0 + 1}
\end{equation}
for some constants $c_1, c_2 \geq 0$. Recall that $F_1 = Q' \circ Q^{-1}$ and we obtain a differential inequality for $Q$
\begin{equation}
Q' \leq c_2 ( (c_0 + 1)Q + c_1)^\frac{1}{c_0 + 1}, \qquad Q(0) = 0 \,.
\end{equation}
If $c_1 = 0$, or equivalently $Q'(0) = 0$, this inequality can be solved explicitly. 
\end{remark}

If the function $(x, y) \mapsto F_1(x)F_2(y)$ from Lemma 
\ref{lem:gin} is concave, but not strictly concave, the equality in \eqref{eq:beqq} is a much more subtle issue and, in general, it is achieved at $(u, v)$ with
a nontrivial relation.  However, we show an important example that we can treat explicitly. Many of the ideas in the proof can be used also in a more general setting.

\begin{lemma}\label{lemma:strictconvexity}
Take $u, v \in W$, where $W$ stands for either $W^{1,p}_0(\Omega)$ or $W^{1,p}(\Omega)$ with $p > 1$, with $u,v>0$ in $\Omega$. If $Q (z) = M(z) = z^p$ and $\gamma$ is as in \eqref{eq:ggm}, that is, $\gamma(t)=\left((1-t) u^p+t v^p\right)^{1/p}$, then the weak derivatives satisfy
\begin{equation} \label{eq:pinq}
|\nabla \gamma(t)|^p \leq (1 - t)|\nabla u|^p + 
t |\nabla v|^p
\end{equation}
and $t \mapsto |\nabla \gamma (t, x)|^p$ is convex. Moreover, if $u, v \in C(\overline{\Omega}) \cap W$ with $u, v$ being linearly independent functions, then
$t \mapsto \|\nabla \gamma (t)\|_{L^p(\Omega)}^p$ is strictly convex. 
\end{lemma}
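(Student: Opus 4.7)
The pointwise inequality \eqref{eq:pinq} together with the pointwise convexity of $t\mapsto|\nabla\gamma(t,x)|^p$ is just Lemma \ref{lem:gin} specialized to $Q(z)=M(z)=z^p$: with this choice $F_1(z)=pz^{(p-1)/p}$, $F_2(z)=z^{1/p}$, and the product $F_1(z_1)F_2(z_2)=p\,z_1^{(p-1)/p}z_2^{1/p}$ is (a multiple of) the weighted geometric mean on $(0,\infty)^2$, hence concave but \emph{not} strictly concave. The only additional point is that the argument extends from the $W^{1,\infty}$--setting of Lemma \ref{lem:gin} to merely weakly differentiable $u,v$: since $u,v\in C(\overline{\Omega})$ with $u,v>0$ in $\Omega$, both are locally bounded away from zero in $\Omega$, so the Sobolev chain rule yields
\[
\nabla\gamma(t)=\gamma(t)^{1-p}\bigl[(1-t)u^{p-1}\nabla u + t\, v^{p-1}\nabla v\bigr]\qquad\text{a.e.\ in }\Omega,
\]
and then the triangle inequality followed by a two-term H\"older inequality with conjugate exponents $p/(p-1)$ and $p$ gives \eqref{eq:pinq} a.e. The pointwise convexity in $t$ follows from the global identity \eqref{eq:globalargument}.

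For the strict convexity of the $L^p$--norm, my plan is to track the equality case in the previous chain of inequalities. Equality in the H\"older step forces $u^p$ to be proportional to $|\nabla u|^p$ with the same proportionality constant as $v^p$ to $|\nabla v|^p$; combined with $u,v>0$ this implies $|\nabla u|,|\nabla v|>0$ and $u|\nabla v|=v|\nabla u|$ a.e. Equality in the triangle step then forces $\nabla u$ and $\nabla v$ to be parallel with the same orientation, so both facts together yield the vector identity
\[
u\,\nabla v = v\,\nabla u\qquad\text{a.e.\ in }\Omega,
\]
equivalently $\nabla(v/u)=0$ a.e. Since $v/u$ belongs to $W^{1,p}_{\mathrm{loc}}(\Omega)$ (again by local boundedness of $u$, $v$ and $1/u$ on compact subsets of $\Omega$ together with the Sobolev chain rule), connectedness of $\Omega$ forces $v/u$ to be constant, contradicting the linear independence of $u$ and $v$.

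To upgrade this pointwise analysis to strict convexity of the integral at an arbitrary pair $0\le t_1<t_2\le 1$ and $\theta\in(0,1)$, I would set $U_i:=\gamma(t_i)$ and invoke the identity $\gamma_{uv}((1-\theta)t_1+\theta t_2)=\gamma_{U_1U_2}(\theta)$ from \eqref{eq:globalargument} to reduce the claim to showing that pointwise equality a.e.\ fails in the bound $|\nabla\gamma_{U_1U_2}(\theta)|^p\le (1-\theta)|\nabla U_1|^p + \theta|\nabla U_2|^p$. By the previous paragraph applied to the pair $(U_1,U_2)$, this is guaranteed provided $U_1$ and $U_2$ are linearly independent; a short algebraic check shows that linear independence of $u,v$ (hence of $u^p,v^p$, using positivity) together with $t_1\ne t_2$ indeed yields linear independence of $U_1^p,U_2^p$, and therefore of $U_1,U_2$. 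Strict inequality then holds on a set of positive measure and integrates to strict convexity of $t\mapsto\|\nabla\gamma(t)\|_{L^p(\Omega)}^p$. The main technical obstacle is carrying out the equality analysis rigorously for weakly differentiable functions, in particular justifying the chain-rule identity for $v/u$ and concluding that $\nabla(v/u)=0$ a.e.\ forces $v/u$ to be constant on the connected domain $\Omega$.
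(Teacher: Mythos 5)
Your proposal is correct and takes essentially the same route as the paper: the pointwise inequality via concavity of the weighted geometric mean (your two-term H\"older step is exactly the paper's zero-eigenvalue Hessian analysis of $F_1F_2$ in different clothing), the reduction via \eqref{eq:globalargument} to a single $t$ using linear independence of $\gamma(t_1),\gamma(t_2)$, and the equality analysis combining the triangle-inequality and proportionality conditions to obtain $u\,\nabla v = v\,\nabla u$ a.e., hence $\nabla(v/u)=0$ and a constant quotient (Du Bois--Reymond), contradicting linear independence. The only immaterial slip is your parenthetical claim that H\"older equality forces $|\nabla u|,|\nabla v|>0$ --- equality also holds where both gradients vanish --- but on that set $u\,\nabla v = v\,\nabla u$ holds trivially, so the argument is unaffected.
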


\begin{proof}
It is easy to show that under our assumptions $u^p, v^p \in W^{1, 1}(\Omega)$ and the proof of Lemma 
\ref{lem:gin} can be repeated line by line with pointwise derivatives replaced by weak ones.

Let $q$ be the conjugate exponent of $p$, that is 
$q$ satisfies $1/p + 1/q = 1$. If $F_1$ and $F_2$ are
as in Lemma \ref{lem:gin}, then $F_1(z_1) =p z_1^{1/q}$
and $F_2(z_2) = z_2^{1/p}$. 

Clearly $F_1'' < 0$, $F_2''<0$ and it is easy to verify that  the Hessian is equal to 
\begin{equation}
H(z_1, z_2) = 
\frac{z_1^{\frac{1}{q} - 2} z_2^{\frac{1}{p} - 2}}{pq}\frac{1}{q}
\left(
\begin{array}{cc}
-z_2^2 & z_1z_2 \\
z_1z_2 & -z_1^2
\end{array}
\right)
\end{equation}
with eigenvalues $0$ and $-(z_1^2 + z_2^2)$ corresponding to the eigenvectors $(z_1, z_2)^T$ and $(-z_2, z_1)^T$ respectively. 
In particular, $(z_1, z_2) \mapsto F_1(z_1)F_2(z_2)$ is concave and \eqref{eq:pinq} and the convexity of $t \mapsto |\nabla \gamma (t, x)|^p$ follows from Lemma \ref{lem:gin}. However, it is not strictly concave and we need a careful inspection of the proof of Lemma \ref{lem:gin} to prove that $t \mapsto \|\nabla \gamma (t)\|_{L^p(\Omega)}^p$ is strictly convex.

Since $Q$ and $Q^{-1}$ are monotone and homogeneous ($Q(\lambda t) = \lambda^p Q(t)$), then the linear independence of 
$u$ and $v$ implies that 
$Q(u)$ and $Q(v)$ are linearly independent, and consequently $U_1=\gamma(t_1)$ and $U_2=\gamma(t_2)$  are linearly independent for 
all $t_1, t_2 \in [0,1]$ with $t_1\neq t_2$. Hence, arguing as in \eqref{eq:globalargument} of the proof of Lemma \ref{lem:gin}, it is enough to prove that
\begin{equation}\label{ineq:incv18}
\|\nabla \gamma (t)\|_{L^p(\Omega)}^p < (1-t)\|\nabla u\|_{L^p(\Omega)}^p + t \|\nabla v\|_{L^p(\Omega)}^p \quad \textrm{ for all }  t \in(0,1).
\end{equation}
Suppose that, for some $t\in(0,1)$, \eqref{ineq:incv18} does not hold. Hence $ \|\nabla \gamma (t)\|_{L^p(\Omega)}^p = (1-t)\|\nabla u\|_{L^p(\Omega)}^p + t \|\nabla v\|_{L^p(\Omega)}^p$. First, the equality holds in the triangle inequality \eqref{eq:tiq} if and only if there is $\alpha : \Omega \to [0, \infty)$
such that 
\begin{equation}\label{eq:gradientsLD2}
\nabla v  = \alpha \nabla u \,. 
\end{equation} 
Second, the equality holds in the concavity inequality \eqref{eq:cnvx} if and only if the vector connecting the points $(Q(u), M(|\nabla u|))$
and $(Q(v), M(|\nabla v|))$ is parallel to the eigenvector corresponding to the zero eigenvalue at the point $(Q(u), M(|\nabla u|))$. Equivalently, 
there is $\beta : \Omega \to [0, \infty)$ such that 
\begin{equation}
\beta^p Q(u) = Q(v)\,, \qquad \beta^p M(|\nabla u|) = M(|\nabla v|) \,.
\end{equation} 
With our choice of $M$ and $Q$ we have 
\begin{equation}
\beta u = v \,, \qquad \beta |\nabla u| = |\nabla v| \,,
\end{equation}
and a comparison with \eqref{eq:gradientsLD2} yields $\alpha = \beta$.
Therefore, since $u, v > 0$ and $u, v \in C(\overline{\Omega})$, and consequently uniformly positive on compact subsets of $\Omega$,
then $\alpha = \frac{v}{u}$ is locally a $C (\Omega) \cap W$ function. Furthermore, 
the weak derivatives of $\alpha$ satisfy
\[
\nabla \alpha = \nabla \left(\dfrac{v}{u}\right) = \dfrac{u \nabla v - v \nabla u }{ u^2} = \dfrac{\left(\alpha u - v  \right) \nabla u }{u^2} 
\equiv 0,
\]
which shows by Du Bois-Reymond Lemma that $\alpha$ is constant and concludes the proof.
\end{proof}

\section{Generalized $p$-Laplacian equations}\label{sec:application2order}

For a bounded smooth domain $\Omega \subset {\mathbb R}^N$ and $p>1$, consider the equation
\begin{equation}\label{eq:supergeneral}
- \, {\rm{div}} (h(|\nabla u|^p) |\nabla u|^{p-2} \nabla u)  = g(x,u) \ \ \text{in} \ \ \Omega, \qquad u = 0 \ \ \text{on} \ \ \partial \Omega \,.
\end{equation}
First, under general assumptions on $h$ and $g$, we 
prove a general theorem (see Theorem \ref{th:applicationp-laplace} below) and then 
in Section \ref{sec:app-p} we present a unified proof to many classical uniqueness theorems involving quasilinear elliptic problems. 
We remark that our results holds true for Neumann boundary conditions
with straightforward modifications in the proofs. 
In the particular case $h \equiv 1$, the uniqueness was already established in \cite{DiazSaa}. Our assumptions in particular include Allen-Cahn-type $p$-Laplacian problems, see Example \ref{example-Allen-Cahn}, and so extend some previous results of \cite{Berestycki1981, Berger1979} for the case of $p=2$, $h\equiv1$, $g \equiv k u - u^{q-1}$ with $q>2$. 

\smallbreak
Set
\begin{equation} \label{eq:gdf}
H(t) := \int_0^t h(s) ds \quad \text{and} \quad G(x,t) := \int_0^t g(x,s) ds,
\end{equation}
and assume:
\begin{enumerate}[(H1)]
 \item $h:[0, \infty) \rightarrow [0, \infty)$ is continuous, bounded, and non-decreasing.
 
 \vspace{5pt}
 
 \item The map $u \mapsto \int_{\Omega} G(x,u) \, dx$ is Fr\'echet differentiable in $W^{1,p}_0(\Omega)$ and its derivative evaluated at $v\in W^{1,p}_0(\Omega)$ is $\int_{\Omega} g(x,u) v\, dx$.
  \vspace{5pt}
\item For every $x \in \Omega$, the function $t \mapsto G(x, t^{1/p})$ is concave on $[0, \infty)$.\vspace{5pt}
\end{enumerate}
 \smallbreak
We say that $u \in W^{1,p}_0(\Omega)$ is a weak solution of \eqref{eq:supergeneral} if
\[
\int_{\Omega} h(|\nabla u|^p) |\nabla u|^{p-2}\nabla u \nabla v \, dx - \int_{\Omega} g(x,u) v \, dx = 0, \quad \textrm{ for all }  v \in W^{1,p}_0(\Omega),
\]
or equivalently, $u$ is a critical point of the Fr\'echet differentiable functional
\begin{equation}\label{eq:functionalsupergeneral}
I(u) = \frac{1}{p} \int_{\Omega} H(|\nabla u|^p )\,  dx - \int_{\Omega} G(x,u)\, dx, \quad u \in W^{1,p}_0(\Omega).
\end{equation}

\vspace{5pt}

Observe that by (H1) the function $H$ is convex, but since we do not assume that $G$ is concave, $I$ is not necessarily convex. We also suppose the following condition, which is related to \eqref{eq:hipderivative2}.
\begin{enumerate}
\item[(H4)] (Regularity and Global comparison) 
Every critical point $u \geq 0$ of  \eqref{eq:functionalsupergeneral} is $C\left(\overline{\Omega}\right)$. 
Given 
any positive critical points $u, v$ of \eqref{eq:functionalsupergeneral}, there exists $\delta \geq 1$ such that $\delta^{-1} v \leq u \leq \delta v$ in $\Omega$.
\end{enumerate}

\begin{theorem}\label{th:applicationp-laplace}
Assume {\rm{(H1)}}--{\,\rm{(H4)}}, let $I$ be as in \eqref{eq:functionalsupergeneral} and $A$ be the set of positive critical points of  \eqref{eq:functionalsupergeneral}.  If $A \neq \emptyset$, then the following holds:
 \begin{enumerate}[i)]
\item  $I$ is constant on $A$. 
\item If $h$ is increasing, or the function in {\rm{(H3)}} is strictly concave, then $A$ is a singleton. 
\item If $h>0$ on $(0, \infty)$, then $A \subset \{\alpha u_0;\, \alpha \in (0, \infty)\}$ for some $u_0\in A$. 
\item If we assume {\rm(H4)} only for $u \in A' \subset A$, then i)--iii) holds with $A$ replaced by $A'$.
\end{enumerate}
\end{theorem}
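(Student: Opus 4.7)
The plan is to apply Theorem~\ref{th:abstracttheorem} with $X = W^{1,p}_0(\Omega)$, $I$ as in \eqref{eq:functionalsupergeneral}, and the set $A$ of positive critical points, using for every pair $u,v\in A$ the path
\[
\gamma(t)(x) = \bigl((1-t)u(x)^p + tv(x)^p\bigr)^{1/p}, \qquad t\in [0,1].
\]
Assumption (a) is immediate. For the Lipschitz assumption (b), (H4) provides comparability \eqref{eq:hipderivative2} for any two positive critical points, so Corollary~\ref{corollary_of_lemma:gpg} applied with $r=p$ guarantees that $\gamma$ is locally Lipschitz at $t=0$ in $W^{1,p}_0(\Omega)$.

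For assumption (c), I split $I\circ \gamma$ into its two terms. Lemma~\ref{lemma:strictconvexity} with $Q(z)=M(z)=z^p$ shows that the pointwise map $t\mapsto |\nabla \gamma(t,x)|^p$ is convex. Since (H1) makes $H$ non-decreasing and convex, the composition $t\mapsto H(|\nabla\gamma(t,x)|^p)$ is convex pointwise in $x$, and convexity is preserved by integration. For the nonlinear term, set $\phi_x(s) := G(x, s^{1/p})$; by (H3) this function is concave in $s$, hence
\[
-\int_\Omega G(x,\gamma(t))\,dx = -\int_\Omega \phi_x\bigl((1-t)u^p + tv^p\bigr)\,dx
\]
is convex in $t$. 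Part (i) then follows from Theorem~\ref{th:abstracttheorem}(i); since only the comparability of the endpoints $u,v$ was used, the same argument yields (iv).

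For (ii), I must upgrade to strict convexity of $t\mapsto I(\gamma(t))$ whenever $u\neq v$. If the function in (H3) is strictly concave, then on the positive-measure set $\{u\neq v\}$ the nonlinear term is pointwise strictly concave in $t$, hence strictly convex after integration. If instead $h$ is strictly increasing (so $H$ is strictly convex on $(0,\infty)$), I distinguish whether $u$ and $v$ are linearly independent: in the affirmative case, strict convexity of the principal term follows directly from Lemma~\ref{lemma:strictconvexity}; if instead $v=\alpha u$ with $\alpha\neq 1$, a direct computation gives $|\nabla \gamma(t,x)|^p = (1+t(\alpha^p-1))|\nabla u(x)|^p$, which is a non-constant affine function of $t$ on the positive-measure set $\{|\nabla u|>0\}$, so the strict convexity of $H$ still yields strict convexity of the integrated principal term. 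In both cases Theorem~\ref{th:abstracttheorem}(iii) gives $|A|=1$.

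Finally, for (iii), take $u,v\in A$ with $h>0$ on $(0,\infty)$. By (i) and Theorem~\ref{th:abstracttheorem}(ii), $t\mapsto I(\gamma(t))$ is constant, so the two convex pieces of $I\circ\gamma$ must each be affine. Affineness of the principal term, combined with strict monotonicity of $H$ and the pointwise inequality $|\nabla\gamma(t)|^p \leq (1-t)|\nabla u|^p + t|\nabla v|^p$ from Lemma~\ref{lemma:strictconvexity}, forces pointwise equality $|\nabla\gamma(t)|^p = (1-t)|\nabla u|^p + t|\nabla v|^p$ almost everywhere. This is the equality case analyzed at the end of the proof of Lemma~\ref{lemma:strictconvexity}, whose conclusion is $v=\alpha u$ for a positive constant $\alpha$; hence all elements of $A$ lie on a single ray $\{\alpha u_0 : \alpha >0\}$. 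The main subtlety throughout is the careful bookkeeping of equality cases in (ii) and (iii), especially isolating the possibility of linearly dependent critical points that is invisible to Lemma~\ref{lemma:strictconvexity}'s strict convexity statement.
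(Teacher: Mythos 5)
Your proof is correct and follows essentially the same route as the paper: the same path $\gamma(t)=((1-t)u^p+tv^p)^{1/p}$, with (H4) plus Corollary~\ref{corollary_of_lemma:gpg} for condition b), and Lemma~\ref{lemma:strictconvexity} composed with the nondecreasing convex $H$ for condition c) --- in effect you re-derive the paper's Lemma~\ref{cor:NEW} inline, including its exact dichotomy between linearly independent endpoints and $v=\alpha u$. The only (minor) organizational difference is in iii), where the paper argues by contraposition via strict convexity (Lemma~\ref{cor:NEW}~iii) and Theorem~\ref{th:abstracttheorem}~iii)), while you extract the equality case from the constancy of $t\mapsto I(\gamma(t))$ given by Theorem~\ref{th:abstracttheorem}~ii); both routes rest on the same equality analysis at the end of Lemma~\ref{lemma:strictconvexity}, whose hypothesis $u,v\in C(\overline{\Omega})$ is supplied by the regularity part of (H4).
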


\begin{remark}\label{remark-truncation}
\begin{enumerate}[i)]
\item Note that $h > 0$ implies that $H$ is strictly increasing. Also, if $h$ is strictly increasing, then $H$ is strictly convex. 
\item Hypothesis {\rm{(H2)}} is satisfied for example if $g:\Omega \times {\mathbb R} \to {\mathbb R}$ is continuous and there exist $C> 0$ and $r>0$ with $r(N-p) \leq (p-1)N+p$, such that  \vspace{5pt}
\[
| g(x,t) |  \leq C (1 + |t|^r), \quad \textrm{ for all }  t \in {\mathbb R}, \, x \in \Omega.
\]

\item Theorem {\rm{\ref{th:applicationp-laplace}}} can be trivially extended to differentiable functionals
\begin{equation}
 u \in W  \mapsto \tilde{I}(u) =\tilde{H}(|\nabla u|^p) - \tilde{G}(u) \,,
\end{equation}
where $W$ is $W^{1, p}_0(\Omega)$ or $W^{1, p}(\Omega)$ and the critical points of $\tilde{I}$ satisfy {\rm{(H4)}}. 
Moreover, $\tilde{H} : L^1(\Omega) \to \mathbb{R}$ is non-decreasing (with respect to the cone of  positive function in $L^1$) and convex and 
$\tilde{G} : W \to \mathbb{R}$ satisfy {\rm{(H3)}} with $G$ replaced by $\tilde{G}$.  

\item In the proof of uniqueness of positive solutions for \eqref{eq:supergeneral}, it is sometimes assumed
(see e.g. \cite[(H2)]{DiazSaa}) that
\[
\frac{g(x,t)}{t^{p-1}} \ \ \text{is strictly decreasing in} \ \ (0, \infty),
\]
which implies {\rm{(H3)}}.

\item On bounded domains the global comparison of positive solutions (GC for short) introduced in {\rm{(H4)}} is a consequence of Hopf Lemma. Indeed, if
 $\frac{\partial u}{\partial \nu} < 0$ and $\frac{\partial v}{\partial \nu} < 0$ on $\partial \Omega$, then the GC follows from Lemma {\rm{\ref{lemma:gradientboundarycomparison}}}. 
 However, Hopf Lemma is not suitable to obtain GC if the problem is posed on ${\mathbb R}^N$. In this case the argument needs to be replaced by 
 sharp decay estimates, as presented in Section {\rm{\ref{sec:RN}}}.

\item The result of Theorem {\rm{\ref{th:applicationp-laplace}}} holds true under slightly weaker conditions, where we require {\rm{(H1)--(H3)}}
only on the range of $u$ and $\nabla u$ (when we have a priori estimates). 
This version will be used in Section {\rm{\ref{section:Minkowski}}}.
\end{enumerate}
\end{remark}

To prove Theorem \ref{th:applicationp-laplace} from Theorem \ref{th:abstracttheorem} we need the following lemma, which generalizes Lemma  \ref{lemma:strictconvexity}.

\begin{lemma}\label{cor:NEW}
Let $W$ be either $W^{1,p}_0(\Omega)$ or $W^{1,p}(\Omega)$ and $\Phi: W \to \mathbb{R}$ given by $\Phi(u) = \int_{\Omega} H(|\nabla u|^p) dx$ with $h$ as in {\rm{(H1)}}. Let $u, v \in W$ such that $u,v>0$ in $\Omega$ and $\gamma(t) = ((1-t)u^p+ tv^p)^{1/p}$ for $t\in [0,1]$. Then:
\begin{enumerate}[i)]
\item $t \mapsto \Phi(\gamma(t))$ is convex on $[0,1]$.
\item $t \mapsto \Phi(\gamma(t))$ is strictly convex on $[0,1]$ if $u, v \in C(\overline{\Omega}) \cap W$, $u \neq v$ and $h$ is increasing.
\item $t \mapsto \Phi(\gamma(t))$ is strictly convex on $[0,1]$ if $u$ and $v$ are linearly independent, $u, v \in C(\overline{\Omega}) \cap W$ and $h>0$ on $(0, \infty)$.
\end{enumerate} 
\end{lemma}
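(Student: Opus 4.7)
The proof will rest on Lemma \ref{lemma:strictconvexity}, which provides both the pointwise convexity of $t \mapsto |\nabla\gamma(t)(x)|^p$ and the strict convexity of $t \mapsto \|\nabla\gamma(t)\|_{L^p(\Omega)}^p$ under linear independence of $u$ and $v$. The elementary observation that $H(s)=\int_0^s h$ is convex and non-decreasing under (H1) (since $h\geq 0$ is non-decreasing) will combine with these ingredients to yield (i)--(iii).

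For part (i), the plan is to exploit the pointwise convexity: composing the convex function $t\mapsto |\nabla\gamma(t)(x)|^p$ with the convex non-decreasing $H$ preserves convexity, so $t\mapsto H(|\nabla\gamma(t)(x)|^p)$ is convex for a.e.\ $x$, and integration over $\Omega$ transfers convexity to $\Phi\circ\gamma$.

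For part (iii), I would first invoke the global argument \eqref{eq:globalargument} from the proof of Lemma \ref{lem:gin} to reduce strict convexity on $[0,1]$ to the strict inequality $\Phi(\gamma(t))<(1-t)\Phi(u)+t\Phi(v)$ for every $t\in(0,1)$; the reduction is legitimate since $\gamma(t_1)$ and $\gamma(t_2)$ inherit continuity and linear independence from $u$ and $v$ for $t_1\neq t_2$. I would then stack the pointwise bounds
\[
H(|\nabla\gamma(t)|^p) \le H((1-t)|\nabla u|^p + t|\nabla v|^p) \le (1-t)H(|\nabla u|^p) + tH(|\nabla v|^p),
\]
using the monotonicity and convexity of $H$ respectively. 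Since $h>0$ on $(0,\infty)$ makes $H$ strictly increasing, and since Lemma \ref{lemma:strictconvexity} produces a set of positive measure on which $|\nabla\gamma(t)|^p<(1-t)|\nabla u|^p+t|\nabla v|^p$ (otherwise $\|\nabla\gamma(t)\|_{L^p}^p$ would not be strictly convex), the first inequality becomes strict there, and integration delivers the desired strict inequality.

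For part (ii), $h$ strictly increasing makes $H$ both strictly convex and strictly increasing. If $u$ and $v$ are linearly independent, part (iii) applies directly. Otherwise $v=cu$ with $c\neq 1$, so $\gamma(t)$ is an explicit scalar multiple of $u$ and the first inequality in the chain is an equality; however, $|\nabla u|^p\neq c^p|\nabla u|^p=|\nabla v|^p$ on $\{|\nabla u|>0\}$, so the strict convexity of $H$ makes the second inequality strict on that set. Continuity and positivity of $u,v$ together with $u\neq v$ (in the Dirichlet case, $u=0$ on $\partial\Omega$ forces $u$ non-constant) guarantee $\{|\nabla u|>0\}$ has positive measure, and integration concludes. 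The main obstacle is precisely this linearly dependent subcase of (ii): the strict inequality then arises from a different mechanism than in (iii), and one must rely on the equality analysis of Lemma \ref{lemma:strictconvexity} combined with the strict convexity of $H$ rather than on strict convexity of the $L^p$ norm.
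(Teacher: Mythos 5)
Your proof is correct and follows essentially the same route as the paper's: the pointwise inequality \eqref{eq:pinq} composed with the nondecreasing convex $H$ gives i), and strictness is obtained exactly as in the paper by splitting into the linearly independent case (first inequality in the chain strict, via the equality analysis of Lemma \ref{lemma:strictconvexity} and $H$ strictly increasing) and the proportional case $v=cu$ (second inequality strict by strict convexity of $H$ on the set where $\nabla u\neq 0$). The only soft spot in your part ii) --- guaranteeing that $\{|\nabla u|>0\}$ has positive measure when $W=W^{1,p}(\Omega)$, where two distinct positive constants would in fact make $t\mapsto\Phi(\gamma(t))$ constant --- is glossed over in the paper's own proof in exactly the same way (its step ``thus $\alpha=1$'' tacitly assumes $\nabla v\not\equiv 0$), so it is not a deviation on your part.
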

\begin{proof}
From (H1) we know that $H:[0,\infty) \to {\mathbb R}$ is nondecreasing and convex. Then, from \eqref{eq:pinq}, given $t_1,t_2\in [0,1]$ and  $\theta\in (0,1)$,
\begin{multline}\label{eq:NEWineqH}
\Phi(\gamma((1-\theta)t_1 + \theta t_2)) =  \int_{\Omega} H(|\nabla \gamma((1-\theta)t_1 + \theta t_2)|^p)dx \leq \int_{\Omega} H ((1-\theta) |\nabla \gamma(t_1)|^p + \theta |\nabla \gamma(t_2)|^p)\, dx \\
\leq \int_{\Omega} \left[(1-\theta)H (|\nabla \gamma(t_1) |^p) + \theta H (|\nabla \gamma(t_1)|^p) \right] dx 
= (1-\theta) \Phi(\gamma(t_1)) + \theta \Phi(\gamma(t_2)),
\end{multline}
and i) follows.

In addition to (H1), if $h$ is increasing or $h > 0$, then $H:[0,\infty) \to {\mathbb R}$ is increasing and the first inequality in 
\eqref{eq:NEWineqH} is strict if $u$ and $v$ are linearly independent 
and iii) follows. Otherwise, $u = \alpha v$ and if in addition 
$h$ is increasing, $H$ is strictly convex and the second inequality 
in \eqref{eq:NEWineqH} is strict unless $|\nabla \gamma(t_1)|=|\nabla \gamma(t_2)|$. Thus $\alpha = 1$ and $u \equiv v$ and ii) holds true. 
\end{proof}

\smallbreak
\begin{proof}[Proof of Theorem \ref{th:applicationp-laplace}]
Let $I: {W^{1,p}_0}(\Omega) \to {\mathbb R}$ be a Fr\'echet differentiable functional given by \eqref{eq:functionalsupergeneral} and $A$ be the set of positive solutions of \eqref{eq:supergeneral}. It is enough to show that conditions a)-c) of Theorem \ref{th:abstracttheorem} are satisfied.

Suppose that $u$ and $v$, with $u \neq v$, are positive solutions of \eqref{eq:supergeneral} and set $\gamma(t) = ((1-t)u^p + t v^p)^{1/p}$.
 Then, from (H4) and Corollary \ref{corollary_of_lemma:gpg}, we infer that $\gamma$ satisfies the hypotheses a) and b) of Theorem \ref{th:abstracttheorem}. From Lemma \ref{cor:NEW} i) (where we use (H1)) and (H3) we obtain $t \mapsto I(\gamma(t))$ is convex on $[0,1]$ as it is the sum of two convex functions. Therefore c) of Theorem \ref{th:abstracttheorem} holds and i) follows.

To prove ii), observe that 
by Lemma \ref{cor:NEW}, 
$t \mapsto I(\gamma(t))$ is strictly convex on $[0,1]$ if either $H$ is strictly convex (equivalent to $h$ being increasing), or the strict concavity holds at (H3). 

Finally, iii) follows from by Lemma \ref{cor:NEW} iii) if $u$ and $v$
are linearly independent, 
and hence $A \subset \{\alpha u_0; \, \alpha \in (0, \infty)\}$. 

The proof of iv) immediately follows after replacing $A$ by $A'$.
\end{proof}

\subsection{New proofs for classical equations}\label{sec:app-p}
Throughout this section $\Omega \subset {\mathbb R}^N$, with $N\geq1$, is a bounded smooth domain and $p>1$. Some of the results, namely Examples \ref{example:p-sublinear} and \ref{example:simplicity1E}, already appeared in \cite[Sections 2.5.4 and 2.5.5]{Reichel} where the author uses a different approach and the uniqueness is proved via strict variational sub-symmetry transformation groups; see \cite[Section 2]{Reichel} for more details.

\begin{example}[$p$-sublinear problems with Dirichlet boundary conditions]\label{example:p-sublinear} 
Consider \eqref{eq:supergeneral} with $h \equiv 1$ \linebreak and $g(x,t) = |t|^{q-2}t$ with $1 < q < p$, that is
\begin{equation}\label{eq:p-Lap_first}
-\Delta_p u=|u|^{q-2}u  \ \ \text{in} \ \ \Omega, \quad u = 0 \ \ \partial \Omega.
\end{equation} 
Then, by \cite{DiBenedetto, Lieberman}, any weak solution is $C^1(\overline\Omega)$.  Since $G(t) = \frac{1}{q}|t|^q$, then $G(t^{1/p}) = \frac{1}{q}|t|^{q/p}$ is strictly concave on $[0, \infty)$ as $q < p$.  Condition (H4) is satisfied by combining Hopf's Lemma \cite[Theorem 5]{Vazquez} with Lemma \ref{lemma:gradientboundarycomparison}. Hence, by Theorem {\rm{\ref{th:applicationp-laplace}}}, problem \eqref{eq:p-Lap_first} has at most one positive solution. On the other hand, it is standard to show that in this case \eqref{eq:supergeneral} has a positive solution (a global minimizer of $I$).
\end{example}

\begin{remark}\label{remark:kawohl} 
An alternative proof of the result above is presented in \cite{DiazSaa} where the path \linebreak $\gamma(t) = ((1-t)u^p + t v^p)^{1/p}$ is used to prove the integral monotonicity 
\begin{equation} \label{eq:nid}
\int_{\Omega} \left( \dfrac{-\Delta_p u^{1/p} }{u^{(p-1)/p}}+ \dfrac{\Delta_p v^{1/p}}{v^{(p-1)/p}} \right)(u-v) dx \geq 0.
\end{equation}
Alternatively, 
 it is simple to show that $\mathcal{D} = \{ v>0; v^{1/p} \in W^{1,p}_0(\Omega)\}$ is a convex cone and as proved in \cite[p. 230]{BelloniKawohl}, see also \cite{KawohlKromer},
the functional
\[
v \mapsto I(v^{1/p}) =: J(v)
\]
is strictly convex on $\mathcal{D}$. Therefore, since $I$ has a global minimizer $u \in W^{1,p}_0(\Omega)$ with $u> 0$, then $J$ has a global minimizer on $\mathcal{D}$ and the uniqueness of \emph{positive minimizers} of $I$ follows from the uniqueness of minimizer for $J$; see also \cite[Lemma A.4]{LiebSeiringerYngvason} for an application to a Gross-Pitaevskii energy functional. However, the proof of uniqueness of \emph{positive critical points} requires more attention,  as showed by our example \eqref{eq:odex}, since 
$I$ might have more critical points than $J$ and
the cone of positive functions in $W^{1,p}_0(\Omega)$ has empty interior if $1< p<N$. 
\end{remark}

\begin{example}[Simplicity of the first $p$-Laplacian eigenvalue]\label{example:simplicity1E}
The first eigenvalue of the $p$-Laplacian \linebreak operator is given by
\begin{equation}\label{eq:1stplaplacianeigenvalue}
\Lambda_p = \inf_ {u \in W^{1,p}_0(\Omega)\backslash\{0\}} \dfrac{\displaystyle{\int_{\Omega} |\nabla u|^p dx}}{\displaystyle{\int_{\Omega} |u|^p}dx} \,.
\end{equation}
Moreover, the first eigenfunctions can be characterized as the nontrivial critical points of
\[
I(u) = \frac{1}{p}\int_{\Omega} |\nabla u|^p dx - \frac{\Lambda_p}{p}\int_{\Omega}|u|^p dx, \ \ u \in W^{1,p}_0(\Omega),
\]
or as nontrivial solutions 
of
\begin{equation}\label{eq:equation-p-lapalcian-eigenvalue}
-\Delta_p  u = \Lambda_p |u|^{p-2}u  \ \ \text{in} \ \ \Omega, \quad u = 0 \ \ \partial \Omega.
\end{equation}
It is standard to show that \eqref{eq:equation-p-lapalcian-eigenvalue} has a positive solution (a global minimizer of $I$). Then the simplicity of $\Lambda_p$ is a consequence of Theorem {\rm{\ref{th:applicationp-laplace}}} iii), with $h(t) =1$, $g(x,t) = \Lambda_ p|t|^{p-2}t$ and with $A$ defined as the set of positive solutions of \eqref{eq:equation-p-lapalcian-eigenvalue}. 
Note that $h$ is not strictly increasing and $G(t^{1/p}) = \frac{\Lambda_p}{p} t$ is not strictly concave.
See \cite{deThelin, Anane, Sakaguchi, Barles, Lindqvist} for alternative and \cite{BelloniKawohl, BrascoFranzina} for similar proofs.
\end{example}

\begin{example}[Nonlinear boundary value problems]\label{example:NBC}
Consider the equation
\begin{equation}\label{eq:nonlinearboundary}
-\Delta_p u + |u|^{p-2}u =0  \ \ \text{in} \ \ \Omega, \quad |\nabla u|^{p-2}\frac{\partial u}{\partial \nu} = |u|^{q-2}u  \ \ \text{on} \ \ \partial \Omega, \quad 1 < q <p.
\end{equation}
Here we prove that \eqref{eq:nonlinearboundary} has at most one positive weak solution; see \cite[Theorem 1.2]{BonderRossi2001} for the existence of infinitely many sign-changing weak solutions.

The weak solutions of \eqref{eq:nonlinearboundary} are defined as the critical points of
\begin{equation}\label{functionalNB}
 I(u) = \frac{1}{p} \int_{\Omega} \left(|\nabla u |^p + |u|^p\right) dx - \frac{1}{q}\int_{\partial \Omega} |u|^q dS, \quad u \in W^{1,p}(\Omega).
\end{equation}
Note that since $q < p$ the boundary integral is well defined. 
By \cite{DiBenedetto, Lieberman}, any weak solutions of \eqref{eq:nonlinearboundary} is $C^1(\overline{\Omega})$. By \cite[Theorem 5]{Vazquez},
any nonnegative nontrivial critical point $v$ of \eqref{functionalNB} satisfies $v>0$ in $\overline{\Omega}$. Set 
\[
A = \{ u \in W^{1,p}(\Omega); u \ \text{is a positive weak solution of \eqref{eq:nonlinearboundary}}\}.
\]
The positivity on $\overline{\Omega}$ for any $u, v$ yields {\rm (H4)}, and from 
Remark {\rm{\ref{remark-truncation}}} with
\begin{equation}
\tilde{H} (v) = \frac{1}{p}\int_\Omega v \, dx, \qquad 
\tilde{G}(v) =  \frac{1}{p}\int_\Omega |v|^p \, dx -  \int_{\partial \Omega} |v|^q \, dS 
\end{equation}
and the strict concavity of $\tilde{G}$, we infer that $A$ has at most one element. Again it is standard to show that $A$ is not empty (contains a global minimizer of $I$).
\end{example}

\begin{example}[Nonlinear Steklov problem]
The arguments from Examples {\rm{\ref{example:simplicity1E}}} and {\rm{\ref{example:NBC}}} can be applied to prove that the first eigenvalue of 
\[
-\Delta_p u + |u|^{p-2}u =0  \ \ \text{in} \ \ \Omega, \quad |\nabla u|^{p-2}\frac{\partial u}{\partial \nu} = \lambda |u|^{p-2}u  \ \ \text{on} \ \ \partial \Omega,
\]
is simple; see \cite{MartinezRossi} for an alternative proof based on arguments from \cite[Appendix]{Lindqvist} on the strict convexity of the function 
\[
z \in {\mathbb R}^N, \quad z \mapsto |z|^p.
\]
\end{example}

\begin{example}[$p$-Laplacian Allen-Cahn problems]\label{example-Allen-Cahn} Let $q>p>1$ and consider the equation
\begin{equation}\label{eq:plaplacianAllen-Cahn}
\left\{
\begin{array}{l}
-\Delta_p u = k |u|^{p-2}u - |u|^{q-2}u \ \ \text{in} \ \Omega,\\
u> 0 \ \ \text{in} \ \ \Omega, \ \ u = 0 \ \ \text{on} \ \ \partial \Omega,
\end{array}
\right.
\end{equation}
and let $\Lambda_p$ be as in \eqref{eq:1stplaplacianeigenvalue}. Set $X = W^{1,p}_0(\Omega) \cap L^q(\Omega)$ with the norm $\|u\|_X = \| \nabla u\|_ {L^p(\Omega)} + \|u\|_{L^q(\Omega)}$, and define $I: X \to {\mathbb R}$ by
\[
I(u) = \frac{1}{p}\int_{\Omega} |\nabla u|^p \, dx  + \int_{\Omega}\left( \frac{|u|^q}{q} - k \frac{|u|^p}{p}\right) \, dx.
\]
The weak solutions of \eqref{eq:plaplacianAllen-Cahn} are defined as the nontrival nonnegative critical points of $I$. By testing \eqref{eq:plaplacianAllen-Cahn} with
$u$ we infer that \eqref{eq:plaplacianAllen-Cahn} has no weak solution if $k \leq \Lambda_p$. So we consider $k > \Lambda_p$. By testing \eqref{eq:plaplacianAllen-Cahn} with $(u - k^{1/q-p})^+$ we can show that any nonnegative solution satisfies $\|u\|_ {L^{\infty}(\Omega)}\leq k^{1/(q-p)}$ and by \cite{DiBenedetto, Lieberman},  $u \in C^1(\overline\Omega)$. Then the Hopf Lemma, as in \cite[Theorem 5]{Vazquez}, combined with Lemma {\rm{\ref{lemma:gradientboundarycomparison}}} guarantees {\rm{(H4)}}. Then, from Theorem {\rm{\ref{th:applicationp-laplace}}}, by setting 
\[
A = \{ u \in X; u > 0 \ \text{is a weak solution of \eqref{eq:plaplacianAllen-Cahn}}\}
\]
and noting that $h \equiv 1$ and $G (t) = \frac{k}{p}|t|^p - \frac{1}{q}|t|^q$ with $G (t^{1/p})$ being strictly concave on $[0,\infty)$, we infer that  \eqref{eq:plaplacianAllen-Cahn} has at most one (positive) weak solution (Corollary {\rm{\ref{corollary_of_lemma:gpg}}} guarantees that $\gamma(t) = ((1-t)u^p - t v^p)^{1/p}$ is locally Lipschitz at $t =0$). Finally, with $k > \Lambda_p$, it is simple to show that $A$ contains a global minimizer of $I$. See \cite[Theorem 4]{Berestycki1981} and \cite[Theorem 6]{Berger1979} for alternative proofs in the case of $p=2$ and $q=3$. For the general case $q>p>1$ an alternative proof follows using the arguments in \cite{DiazSaa} based on the relation \eqref{eq:nid}.
\end{example}

\section{Mean curvature type operators}\label{section:meancurvatureoperators}

\subsection{Mean curvature operator in Euclidean space}
In this section we investigate solutions of 
\begin{equation}\label{eq:crvt}
-\operatorname{div}\left( \frac{\nabla u}{\sqrt{1+|\nabla u|^2}}\right)= g(x, u) \quad \text{in }\Omega,\qquad u=0 \quad \text{on }\partial\Omega,
 \end{equation}
where $\Omega \subset {\mathbb R}^N$, with $N\geq1$, is a bounded smooth domain. Note that problem \eqref{eq:crvt} has the structure of \eqref{eq:supergeneral} with $h(t)=(1+t)^{-\frac{1}{2}}$, but $h$ does not satisfy $(H1)$ since it is a decreasing function. A model nonlinearity in this section is $g(x, u) = \lambda u^{p-1}$ with $p \in (1, 2)$, $\lambda > 0$; however, in this case it is known \cite{Le2005, HabetsOmari} that there are multiple non-negative solutions of \eqref{eq:crvt}. Even in this case, our method yields the 
uniqueness of solutions in certain subsets of the state space, specifically for functions with an additional bound on the gradient. 
We remark that the existence of small $C^1$-solutions was proved in the one-dimensional case in 
\cite{HabetsOmari, BonheureHabets} and the existence of small solutions in higher dimensions in \cite{ObersnelOmari2010}. Our uniqueness 
results provide new insights on the bifurcation diagrams obtained in \cite[Fig. 2]{BonheureHabets} and \cite[Fig. 1]{HabetsOmari}. Observe that our results also apply to Allen-Cahn-type nonlinearities like $g(x,u)=k |u|^{p-2}u- |u|^{q-2}u$ with $k>0$, $q>p$ and $p\in (1,2)$.

\begin{theorem}
If there exists $p \in (1, 2)$ such that the function $t \mapsto G(x, t^{\frac{1}{p}})$ is concave and {\rm{(H2)}} (from Section {\rm{\ref{sec:application2order}}}) is satisfied, then there exists at most one positive solution of 
\eqref{eq:crvt} in the set 
\[
Z := \left\{u \in W^{1, \infty}_0(\Omega); \|\nabla u\|_{L^\infty(\Omega)}  < \left(\frac{2 - p}{p - 1}\right)^{1/2}\right\}.
\]
\end{theorem}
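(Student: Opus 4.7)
The plan is to apply Theorem \ref{th:abstracttheorem} along the lines of the proof of Theorem \ref{th:applicationp-laplace}, the main difference being that here $h(s)=(1+s)^{-1/2}$ is \emph{decreasing}, so (H1) fails and the operator exponent is $2$; consequently the exponent $p\in(1,2)$ from the hypothesis will play the role of a free parameter in the path, not of the operator exponent. The energy is
$$I(u) = \int_\Omega \bigl(\sqrt{1+|\nabla u|^2} - 1\bigr)\,dx - \int_\Omega G(x,u)\,dx,$$
with principal integrand $M(s):=\sqrt{1+s^2}-1$. For two positive solutions $u, v\in Z$ with $u\neq v$, I would take
$$\gamma(t):=\bigl((1-t)u^p + tv^p\bigr)^{1/p}, \qquad t\in[0,1],$$
and verify a)--c) of Theorem \ref{th:abstracttheorem}. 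Condition a) is trivial. For b), the uniform ellipticity of the equation on $Z$ yields $u,v\in C^1(\overline\Omega)$ by standard regularity; Hopf's lemma combined with Lemma \ref{lemma:gradientboundarycomparison} gives $\delta^{-1}v\leq u\leq \delta v$, and Corollary \ref{corollary_of_lemma:gpg} provides the local Lipschitz property at $t=0$. Since $M$ is convex and increasing, Lemma \ref{lem:gin} yields $|\nabla\gamma(t)|\leq\max(|\nabla u|,|\nabla v|)$ pointwise, so $\gamma$ stays in $Z$.

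The heart of the proof is condition c). The nonlinear term $t\mapsto -\int G(x,\gamma(t))\,dx$ is convex by the assumed concavity of $s\mapsto G(x,s^{1/p})$. For the principal part, I would invoke Lemma \ref{lem:gin} with $Q(z)=z^p$ and the $M$ above, which gives
$$F_1(z_1) = p\, z_1^{(p-1)/p}, \qquad F_2(z_2)=\sqrt{z_2^2+2z_2}.$$
A direct computation, using the criterion in Remark \ref{remark:concavity}, yields
$$\frac{F_1 F_1''}{(F_1')^2} = -\frac{1}{p-1}, \qquad \frac{F_2 F_2''}{(F_2')^2} = -\frac{1}{(z_2+1)^2},$$
so $F_1 F_2$ is strictly concave on $(0,\infty)\times(0,\Gamma)$ iff $(z_2+1)^2 < 1/(p-1)$ throughout that region. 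Substituting $z_2+1=\sqrt{1+|\nabla u|^2}$, this is exactly $|\nabla u|^2 < (2-p)/(p-1)$, the defining bound of $Z$; with $\Gamma:=M(\sqrt{(2-p)/(p-1)})$ one has $(\Gamma+1)^2=1/(p-1)$, so strict concavity holds on the full open region containing the relevant segments. Lemma \ref{lem:gin} then gives convexity of $t\mapsto \int_\Omega M(|\nabla\gamma(t)|)\,dx$, hence c).

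To conclude via Theorem \ref{th:abstracttheorem} iii), I would need \emph{strict} convexity of $t\mapsto I(\gamma(t))$ whenever $u\neq v$, which is the main remaining obstacle. Following the approach of Lemma \ref{lemma:strictconvexity}, at every $x$ where $u(x)\neq v(x)$ (so $Q(u(x))\neq Q(v(x))$), the segment between $(Q(u(x)),M(|\nabla u(x)|))$ and $(Q(v(x)),M(|\nabla v(x)|))$ is nondegenerate and lies in the open region where $F_1 F_2$ is strictly concave; the concavity step in the proof of Lemma \ref{lem:gin} is therefore strict at $x$, forcing strict inequality in \eqref{eq:beqq} at $x$. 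Since $\{x:u(x)\neq v(x)\}$ has positive measure, integration yields strict convexity of the principal part, hence of $t\mapsto I(\gamma(t))$, contradicting that $\gamma$ joins two critical points. The sharp aspect, and the crux of the whole argument, is that the bound defining $Z$ is precisely the one that makes $F_1 F_2$ concave.
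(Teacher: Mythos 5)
Your proposal is correct and is essentially the paper's own proof: the same path $\gamma(t)=((1-t)u^p+tv^p)^{1/p}$, condition b) of Theorem \ref{th:abstracttheorem} obtained from uniform ellipticity and regularity, Hopf's lemma, Lemma \ref{lemma:gradientboundarycomparison} and Corollary \ref{corollary_of_lemma:gpg}, and condition c) from Lemma \ref{lem:gin} with $Q(z)=z^p$ and $M(z)=\sqrt{1+z^2}$ (your normalization $M(z)=\sqrt{1+z^2}-1$ is immaterial), with the criterion of Remark \ref{remark:concavity} producing exactly the gradient bound that defines $Z$. The only divergence is bookkeeping in the strict-convexity step, where you localize on $\{x;\,u(x)\neq v(x)\}$ — note that at points where a gradient vanishes the corresponding endpoint sits on the boundary $z_2=M(0)$ rather than in the open region, a case the strict part of Lemma \ref{lem:gin} covers only when at most one endpoint lies there — while the paper instead localizes on the set $\{|\nabla u|>0\}$, which has positive measure since $u>0$ vanishes on $\partial\Omega$; both treatments are at the same level of detail and lead to the same conclusion.
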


\begin{proof}
We verify assumptions of Theorem \ref{th:abstracttheorem} with 
the curve $\gamma$ defined for any $u, v \in Z$, $u,v>0$ by
\begin{equation}
\gamma(t) = ((1-t)u^p + t v^p)^{\frac{1}{p}} \qquad t \in [0, 1] \,.
\end{equation}
Note that solutions to \eqref{eq:crvt} that belong to $Z$ are critical points of  the Fr\'echet differentiable functional
\begin{align}
I: W^{1,p}_0(\Omega)\to \mathbb R,\qquad I(u):= \int_{\Omega} \sqrt{1+|\nabla u|^2} - G(x,u) \, dx \,.
\end{align}
To prove the convexity of $t \mapsto I(\gamma(t))$ we use Lemma 
\ref{lem:gin} with 
\begin{equation}
Q (z) = z^p \qquad z \in (0, \infty), \quad \textrm{and } M(z) = \sqrt{1 + z^2} \qquad z \in \left[0,  \left(\frac{2 - p}{p - 1}\right)^{1/2}\right)\,.
\end{equation}
Then $F_1 (z_1) = pz_1^{(p - 1)/p}$ for any $z_1 \in (0, \infty)$ and $F_2(z_2) = \sqrt{z_2^2 - 1}$ for any 
$z_2 = M(z) \in [1, 1/\sqrt{p-1})$.
It is easy to check that $F_2''(z_2)< 0$, $F_1''(z_1) < 0$ for any $(z_1, z_2) \in (0, \infty) \times (1, 1/\sqrt{p-1})$. 
The strict concavity of $F$ on $(0, \infty) 
\times (1, 1/\sqrt{p-1})$ now follows by \eqref{eq:opp} from 
\begin{equation}
\left(\left(\frac{F_1}{F_1'}\right)' - 1 \right) \left(\left(\frac{F_2}{F_2'}\right)' - 1 \right) = 
  \frac{1}{p - 1} \left(  \frac{1}{z_2^2} \right) > 1\,.
\end{equation}
Note that this is the only step where we need a restriction on the gradient. 
Thus from Lemma \ref{lem:gin} 
we have that $t \mapsto M(|\nabla \gamma(t)|)$ is strictly convex
whenever at least one of $|\nabla u|$, $|\nabla v|$ is positive. Here and below, the gradient of a function is understood in a weak sense. 
 Clearly, $M(|\nabla \gamma(\cdot)|) \equiv 1$ if $|\nabla u| = |\nabla v| =0$. If $|\nabla u| = 0$ almost everywhere, then $u$ is constant, and therefore zero by the boundary conditions, a contradiction to $u > 0$. 
 Thus $|\nabla u| > 0$ on  a set of positive measure, and on that set
 $t \mapsto M(|\nabla \gamma(t)|)$ is strictly convex, and consequently 
 for each $u, v > 0$
\begin{equation}
t \mapsto \int_{\Omega} M(|\nabla \gamma(t)|) \, dx 
\qquad \textrm{ is strictly convex in }[0,1]\,.
\end{equation}
Since $t \mapsto G(x, \gamma(t))$ is concave, the strict convexity of $t \mapsto I(\gamma(t))$ follows. 

Moreover, any solution in $Z$ of \eqref{eq:crvt}  
satisfies a uniformly elliptic equation, and consequently it is smooth 
by elliptic regularity, 
and moreover satisfies the maximum principle and Hopf lemma.  Thus by Lemma \ref{lemma:gradientboundarycomparison}
and Corollary \ref{corollary_of_lemma:gpg} we obtain condition Theorem 
\ref{th:abstracttheorem} b) , and the uniqueness follows. 
\end{proof}

\subsection{Mean curvature operator in Minkowski space}\label{section:Minkowski}
We now consider a quasilinear Dirichlet problem involving the mean curvature operator in Minkowski space, namely
\begin{equation}\label{ctp}
-\operatorname{div}\left( \frac{\nabla u}{\sqrt{1-|\nabla u|^2}}\right)= g(x, u) \quad \text{in }\Omega,\qquad u=0 \quad \text{on }\partial\Omega,
 \end{equation}
where $\Omega \subset {\mathbb R}^N$, with $N\geq1$, is a bounded domain. Existence of positive solutions can be found by minimization
of 
\begin{align}\label{I:sthg}
I(u):= \int_{\Omega} \left(1-\sqrt{1-|\nabla u|^2}\right) - G(x,u) dx \,,
\end{align}
where $G(x, t) := \int_0^{t}g(x,s)\ ds$ on the convex set 
\begin{align}
K_0:= \{u \in W^{1,\infty}(\Omega); |\nabla u| \leq 1, \ u=0 \text{ on }\partial \Omega\},\label{K:0}
\end{align}
under suitable assumptions on $g$, see for example \cite{Mawhin}. 

Set
\begin{align*}
\operatorname{diam}(\Omega):=\sup\{|x-y|; x,y\in\overline{\Omega}\},\quad M:=\frac{\operatorname{diam}(\Omega)}{2},
\end{align*}
and   we have that 
\begin{align}
&\|u\|_{L^\infty(\Omega)}\leq M\quad\text { for all }u\in K_0 \,. \label{unif:bound:1}
\end{align}

Our main contribution  is the following new uniqueness result.

\begin{theorem}\label{ctp:thm}
Let $\Omega\subset\mathbb R^N$ be a bounded smooth domain and assume that
\begin{itemize}
\item [$(G)$] for every $x\in \Omega$, the function $t\mapsto G(x,\sqrt{t})$ is concave in $[0, M^2]$,
\item [$(g)$] The function $g:\overline{\Omega}\times [0,M]\to\mathbb R$ is continuous and of the form $g=g_1+g_2$, with $g_i(x,0)=0$ for $i=1,2$,  where $t\mapsto g_1(x,t)$ is Lipschitz continuous in $[0,M]$ uniformly in $x$ and $g_2$ is continuous and nonnegative in $\overline{\Omega}\times [0,M]$. 
\end{itemize}
Then \eqref{ctp} has at most one positive classical solution, that is, the set
\begin{equation*}
{\mathcal A}:=\{u\in C^{2,\alpha}(\overline{\Omega}); u>0 \text{ in }\Omega \text{ and }u \text{ satisfies \eqref{ctp}}\}
\end{equation*}
contains at most one element.
\end{theorem}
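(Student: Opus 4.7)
The plan is to apply Theorem \ref{th:abstracttheorem} with the path $\gamma(t)(x):=\sqrt{(1-t)u^2(x)+tv^2(x)}$, which is the natural choice given that hypothesis $(G)$ is phrased exactly as the concavity of $t\mapsto G(x,\sqrt{t})$. Since the original functional in \eqref{I:sthg} is only defined on the non-linear set $K_0$, the main preparatory step is to introduce an auxiliary truncated problem on a genuine Banach space. Fix $u,v\in\mathcal A$. Since $u,v\in C^{2,\alpha}(\overline\Omega)$ are classical solutions, by continuity there exists $c\in(0,1)$ with $\|\nabla u\|_\infty,\|\nabla v\|_\infty\leq c$. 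Pick $c'\in(c,1)$ and let $\widetilde\Psi\in C^1([0,\infty))$ be a convex, non-decreasing extension of $z\mapsto 1-\sqrt{1-z^2}$ from $[0,c']$ (for instance, linearly past $c'$). Define
\begin{equation*}
\widetilde I(w):=\int_\Omega \widetilde\Psi(|\nabla w|)\,dx-\int_\Omega G(x,w)\,dx,\qquad w\in W^{1,2}_0(\Omega).
\end{equation*}
Thanks to the truncation, $\widetilde I$ is Fr\'echet differentiable on a Banach space, and by construction $u,v$ are critical points of $\widetilde I$, so that $\mathcal A$ is a subset of the critical set of $\widetilde I$.

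\textbf{Staying in the good region and strict convexity.} I would apply Lemma \ref{lem:gin} with $Q(z)=z^2$ and $M(z)=1-\sqrt{1-z^2}$, noting that $F_1(z_1)=2\sqrt{z_1}$ and $F_2(z_2)=\sqrt{2z_2-z_2^2}$ satisfy the criterion \eqref{eq:opp} via the computation
\begin{equation*}
\left(\left(\tfrac{F_1}{F_1'}\right)'-1\right)\left(\left(\tfrac{F_2}{F_2'}\right)'-1\right)=1\cdot\frac{1}{(1-z_2)^2}>1\quad\text{on }(0,\infty)\times(0,\Gamma),\ \Gamma<1,
\end{equation*}
so that $F$ is strictly concave there. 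Choosing $\Gamma=M(c)<1$, the lemma gives the pointwise bound $M(|\nabla\gamma(t)|)\leq (1-t)M(|\nabla u|)+tM(|\nabla v|)\leq M(c)$, hence $|\nabla\gamma(t)|\leq c<c'$ pointwise. Consequently $\widetilde\Psi(|\nabla\gamma(t)|)=M(|\nabla\gamma(t)|)$ for every $t$, and $t\mapsto\int_\Omega\widetilde\Psi(|\nabla\gamma(t)|)\,dx$ is strictly convex by the second part of Lemma \ref{lem:gin}. Combined with $(G)$, which ensures that $t\mapsto -\int_\Omega G(x,\gamma(t))\,dx$ is convex, this yields strict convexity of $t\mapsto\widetilde I(\gamma(t))$, i.e. condition c) of Theorem \ref{th:abstracttheorem}.

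\textbf{Lipschitz continuity at $t=0$.} For this I would use hypothesis $(g)$ to run Hopf's Lemma. Decomposing $g=g_1+g_2$ with $g_1$ Lipschitz in $t$ with constant $L$ and $g_1(x,0)=0$, $g_2\geq 0$, I rewrite the equation as
\begin{equation*}
-\operatorname{div}\!\left(\tfrac{\nabla u}{\sqrt{1-|\nabla u|^2}}\right)+Lu=g_2(x,u)+\bigl(g_1(x,u)+Lu\bigr)\geq 0.
\end{equation*}
Because $\|\nabla u\|_\infty<1$ on $\overline\Omega$, the linearized operator is uniformly elliptic, so the classical Hopf Lemma yields $\partial_\nu u<0$ on $\partial\Omega$, and analogously for $v$. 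Lemma \ref{lemma:gradientboundarycomparison} then gives $\delta^{-1}v\leq u\leq \delta v$ in $\Omega$ for some $\delta\geq 1$, and Corollary \ref{corollary_of_lemma:gpg} (with $r=2$) shows that $\gamma$ is locally Lipschitz at $t=0$ in $W^{1,2}_0(\Omega)$, verifying condition b) of Theorem \ref{th:abstracttheorem}.

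\textbf{Conclusion and main obstacle.} Conditions a)--c) of Theorem \ref{th:abstracttheorem} being satisfied for any pair $u,v\in\mathcal A$, part iii) forces $|\mathcal A|\leq 1$. The main subtlety is the interplay between the truncation and the concavity check: one must guarantee that the path $\gamma$ never leaves the region where $\widetilde\Psi$ equals the original integrand $1-\sqrt{1-|\nabla\cdot|^2}$, otherwise the critical-point identification fails; this is precisely why one first extracts a uniform bound $c<1$ for $|\nabla u|$ and $|\nabla v|$ and then invokes the pointwise convexity estimate of Lemma \ref{lem:gin} to propagate this bound to $\gamma(t)$. The other delicate point, the strict concavity of $F_1F_2$, is handled by the explicit computation above, which also explains why the approach fails for the Euclidean operator $\mathcal M_+$ without an a priori gradient bound.
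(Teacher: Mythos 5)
Your overall architecture coincides with the paper's: truncate so that the energy becomes Fr\'echet differentiable on a genuine Banach space whose critical set contains $\mathcal A$ (possibly properly), extract comparability of two solutions from $(g)$ via Hopf's Lemma and Lemma \ref{lemma:gradientboundarycomparison}, and run Theorem \ref{th:abstracttheorem} along $\gamma(t)=\sqrt{(1-t)u^2+tv^2}$ using Corollary \ref{corollary_of_lemma:gpg}. However, there are two genuine gaps. First, the bound $\|\nabla u\|_{L^\infty(\Omega)}\leq c<1$ does \emph{not} follow ``by continuity'': the equation holds only in $\Omega$, so $|\nabla u|<1$ in $\Omega$, and continuity of $\nabla u$ on $\overline\Omega$ only yields $|\nabla u|\leq 1$ up to the boundary; strict inequality on $\partial\Omega$ is not automatic, since at a boundary point with $|\nabla u|=1$ the degeneration of the coefficients in $\operatorname{div}\bigl(\nabla u/\sqrt{1-|\nabla u|^2}\bigr)$ is a priori compatible with cancellations. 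This is exactly where the paper invokes the quantitative estimates of \cite[Corollary 3.4 and Theorem 3.5]{BartnikSimon} to obtain the uniform bound \eqref{unif:bound:2}, $|\nabla u|\leq 1-\theta$ on all of $\mathcal A$; your pairwise bound would suffice for the argument, but it requires that citation, not continuity. Second, your $\widetilde I$ is not well defined on $W^{1,2}_0(\Omega)$: $g$, hence $G$, is only given on $\overline\Omega\times[0,M]$, so $\int_\Omega G(x,w)\,dx$ makes no sense for general $w$, and Fr\'echet differentiability needs growth control besides. The paper therefore truncates the nonlinearity as well, setting $\hat g(x,t)=\operatorname{sign}(t)\,g(x,\min\{|t|,M\})$, which is bounded, and uses that $\hat G(x,u)=G(x,u)$ along $\mathcal A$ and along $\gamma$ thanks to \eqref{unif:bound:1}. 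You truncated only the gradient part. Both gaps are repairable without changing your structure; your Hopf step itself is sound and essentially the paper's (the paper takes $c(x)=\int_0^1 g_1'(x,su)\,ds$, while your constant $L$ avoids even a.e.\ differentiability of $g_1$).

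Where you genuinely diverge, your route is legitimate and arguably more direct: you get strict convexity of $t\mapsto\int_\Omega\widetilde\Psi(|\nabla\gamma(t)|)\,dx$ from the strict concavity of $F_1F_2$ via Lemma \ref{lem:gin} --- your computation $\bigl((F_2/F_2')'-1\bigr)=(1-z_2)^{-2}>1$ is correct, and it shows that for ${\mathcal M}_-$ no smallness of the gradient is needed, in contrast with the Euclidean case --- whereas the paper recasts the truncated problem in the form $\tilde H(|\nabla u|^2)$ with $\tilde h$ nondecreasing and funnels it through Theorem \ref{th:applicationp-laplace} ii), iv) (via Lemma \ref{cor:NEW} and Lemma \ref{lemma:strictconvexity}), with strictness coming from the monotonicity of $\tilde h$ on the relevant range together with Remark \ref{remark-truncation} vi). One further small repair on your side: the pointwise strict convexity in Lemma \ref{lem:gin} fails where $\nabla u=\nabla v=0$, so to conclude strict convexity of the integral you must add the observation the paper makes in its Euclidean section: $u>0$ with $u=0$ on $\partial\Omega$ forces $|\nabla u|>0$ on a set of positive measure, and strictness on that set suffices.
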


\begin{remark}
 \begin{enumerate}[i)]
 \item $(g)$ is used to show {\rm{(H4)}} from Section \ref{sec:application2order}.
 \item If one assumes $(g)$ and that $g(x,\cdot)$ is \emph{nonincreasing} in $[0,M]$ for every $x\in\Omega$, then \eqref{ctp} has only one solution by the convexity of the energy functional {\rm{(}}see \cite[Proposition 1.1]{BartnikSimon}{\rm{)}}.
 \item If the concavity assumption $(G)$ is dropped, then there are results on multiplicity of nontrivial nonnegative solutions. In particular, \cite[Theorem 3]{Mawhin} shows that \eqref{ctp} has at least two nontrivial nonnegative solutions if $g(x,u)= k |u|^{p-2}u-u$, $p>2$, and $k>0$ is large enough.
 \end{enumerate}
\end{remark}

Let $\lambda_1$ denote the first Dirichlet eigenvalue of the Laplacian in $\Omega$.  Theorem \ref{ctp:thm} and standard existence arguments imply the following result which applies in particular to $g(x,u) = k u - |u|^{p-2}u$ with $k>2\lambda_1,$ $p>2$  or to $g(x,u)=|u|^{q-2}u$ for $q\in(1,2).$

\begin{corollary}\label{coro:ACuP}
Let $\Omega \subset \mathbb{R}^N$, $N\geq 1$, be a bounded smooth domain. In addition to $(G)$, $(g)$ from Theorem \ref{ctp:thm}, assume that $g$ is H\"{o}lder continuous in $\overline{\Omega}\times [0,M]$, $g(\cdot,0)=0$ in $\Omega$, and 
\begin{equation}\label{asgl}
\frac{G(x, t)}{t^2} > \lambda_1 \, \qquad \text{as} \ \ t \to 0^+, \ \ \text{uniformly in} \ \  x\in {\Omega}.
\end{equation}
Then \eqref{ctp} has a unique positive classical solution.
\end{corollary}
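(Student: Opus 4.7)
Since Theorem \ref{ctp:thm} already provides uniqueness in $\mathcal{A}$, the content of the corollary reduces to producing at least one element of $\mathcal{A}$. I would obtain such a positive classical solution by the direct method applied to the functional $I$ from \eqref{I:sthg} on the convex cone $K_0^+ := \{u \in K_0 ; u \geq 0\}$, followed by an elliptic regularity step.

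For the existence of a minimizer, note that $K_0^+$ is convex, contains $0$, and is weakly-$*$ closed and bounded in $W^{1,\infty}(\Omega)$. A minimizing sequence $\{u_n\} \subset K_0^+$ admits, by Banach--Alaoglu and Arzel\`a--Ascoli (the family is uniformly Lipschitz, thanks to the gradient bound in \eqref{K:0}), a subsequence converging weakly-$*$ in $W^{1,\infty}$ and uniformly on $\overline{\Omega}$ to some $u^\ast \in K_0^+$. Convexity of $\xi \mapsto 1 - \sqrt{1 - |\xi|^2}$ on $\{|\xi| \leq 1\}$ yields weak-$*$ lower semicontinuity of the principal part, while uniform convergence together with continuity of $G$ on $\overline{\Omega} \times [0, M]$ handles the lower-order term, so $I(u^\ast) = \inf_{K_0^+} I$.

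Next I would use \eqref{asgl} to preclude $u^\ast \equiv 0$. Let $\varphi_1 > 0$ denote a first Dirichlet eigenfunction of $-\Delta$ on $\Omega$, normalized so that $\varepsilon \varphi_1 \in K_0^+$ for all $\varepsilon \in (0,1]$. Expanding $1 - \sqrt{1 - s} = s/2 + O(s^2)$ and invoking the inequality $G(x,t) \geq (\lambda_1 + \delta) t^2$ (valid for $t \geq 0$ small and some $\delta > 0$, by \eqref{asgl} and its uniformity in $x$), a direct computation gives
\begin{equation*}
I(\varepsilon \varphi_1) \;\leq\; \frac{\varepsilon^2 \lambda_1}{2}\int_\Omega \varphi_1^2\, dx \;-\; (\lambda_1 + \delta)\,\varepsilon^2 \int_\Omega \varphi_1^2\, dx \;+\; O(\varepsilon^4) \;<\; 0
\end{equation*}
for $\varepsilon$ small enough. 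Hence $I(u^\ast) < 0 = I(0)$, so $u^\ast \not\equiv 0$.

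Finally, I would invoke the regularity theory for the Minkowski mean curvature operator (in the spirit of \cite{BartnikSimon}) to conclude that the H\"older continuity of $g$ upgrades $u^\ast$ to a classical $C^{2,\alpha}(\overline{\Omega})$ solution of \eqref{ctp} satisfying the strict gradient bound $|\nabla u^\ast| < 1$ on $\overline{\Omega}$. Once the equation holds pointwise and is uniformly elliptic along $u^\ast$, the strong maximum principle (using $g(\cdot,0) = 0$) combined with $u^\ast \not\equiv 0$ forces $u^\ast > 0$ in $\Omega$, so $u^\ast \in \mathcal{A}$. The main obstacle is precisely this last step: because $K_0^+$ is defined by the pointwise constraint $|\nabla u| \leq 1$, a priori the minimizer could saturate this bound on a set of positive measure, in which case the Euler--Lagrange equation would hold only as a variational inequality rather than classically; the H\"older regularity of $g$ and the Bartnik--Simon framework are what rule out saturation and yield a genuine classical solution.
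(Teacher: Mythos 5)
Your overall strategy---uniqueness from Theorem \ref{ctp:thm}, existence by direct minimization, nontriviality from \eqref{asgl} tested against $\varepsilon\varphi_1$, then regularity and the maximum principle---is the same as the paper's, and your compactness, lower semicontinuity, and negativity computations are sound (the paper gets the same sign more crudely via $1-\sqrt{1-t}\le t$, which is why \eqref{asgl} is stated with $\lambda_1$ rather than $\lambda_1/2$). The genuine gap is in the passage from minimizer to solution of \eqref{ctp}. By minimizing over the cone $K_0^+=\{u\in K_0;\ u\geq 0\}$ you impose a \emph{second} one-sided constraint besides $|\nabla u|\leq 1$: on the coincidence set $\{u^\ast=0\}$ only one-sided variations are admissible, so a priori $u^\ast$ solves an obstacle problem (a variational inequality), not the Euler--Lagrange equation. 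The Bartnik--Simon/Mawhin machinery you invoke to de-saturate the gradient constraint---concretely \cite[Theorem 1]{Mawhin}, which is what actually produces a $W^{2,p}$ minimizer with $\|\nabla u\|_{L^\infty(\Omega)}<1$ solving the equation---is formulated for minimizers over all of $K_0$ with a bounded continuous nonlinearity on $\overline{\Omega}\times\mathbb{R}$; it does not apply as stated to your sign-constrained problem, and you give no argument that the obstacle at $u=0$ is inactive. You flag the gradient-saturation issue in your last paragraph but never address this distinct one.

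The paper removes both difficulties at once by a truncation/symmetrization device you are missing: it replaces $g$ by $\hat g(x,t)=\operatorname{sign}(t)\,g(x,\min\{|t|,M\})$, odd in $t$, bounded and continuous (note $g$ is only given on $\overline{\Omega}\times[0,M]$, and \eqref{unif:bound:1} makes the truncation harmless), so that $\hat G(x,\cdot)$ is even; it then minimizes the truncated functional over the full set $K_0$ and applies \cite[Theorem 1]{Mawhin}. Since $\hat I(|\bar u|)=\hat I(\bar u)$, the function $\bar v=|\bar u|$ is itself a minimizer, hence by Mawhin's theorem again a $W^{2,p}$ solution of \eqref{ctp} with strict gradient bound; Schauder theory \cite[Theorem 6.14]{GilbargTrudinger} upgrades it to $C^{2,\alpha}(\overline{\Omega})$, and the strong maximum principle gives $\bar v>0$. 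Your argument can be repaired along exactly these lines (even extension of $G$, drop the sign constraint, take the absolute value of the minimizer), but as written the step from $u^\ast\in K_0^+$ to $u^\ast\in\mathcal{A}$ is unjustified. One further small correction: the H\"older continuity of $g$ is not what rules out saturation of $|\nabla u|\leq 1$---continuity and boundedness suffice for \cite[Theorem 1]{Mawhin} and \cite{BartnikSimon}---it is needed only for the final Schauder step.
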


We prove first the main theorem of this section.

\begin{proof}[Proof of Theorem {\rm{\ref{ctp:thm}}}] 
By \eqref{unif:bound:1} and \cite[Corollary 3.4 and Theorem 3.5]{BartnikSimon} there is $\theta\in (0,1)$ such that
\begin{align}
&{\mathcal A}\subset \{ u\in {K_0}; |\nabla u|\leq 1-\theta\}.\label{unif:bound:2}
\end{align}
Let
\begin{equation}
 \hat g(x,t):=\textrm{sign} (t) g(x,\min\{|t|,M\})   \qquad \text{ for }(x,t)\in \overline{\Omega}\times \mathbb R.
\end{equation}
and $\hat G(x,t):=\int_0^t \hat g(x,s)\, ds$. Note that $\hat g$ is just a truncation of $g$ extended as an odd function. Moreover, let $\tilde h\in C([0,\infty))$ be given by 
\[
\tilde h(t)=\left\{
\begin{array}{ll}
\frac{1}{\sqrt{1-t}}, \qquad &t\in[0, 1-\theta]\vspace{5pt},\\
\frac{1}{\sqrt{\theta}}, \qquad &t \geq 1-\theta,
\end{array}
\right.
\]
with $\theta$ as in \eqref{unif:bound:2} and let $\tilde H(t) := \int_0^t \tilde h(s) ds$ for $t\geq 0$.  By \eqref{unif:bound:1} and \eqref{unif:bound:2}, the elements of $\mathcal A$ are critical points of 
\begin{align}\label{I:tilde}
\tilde I:H^{1}_0(\Omega) \to \mathbb R,\qquad \tilde I(u):= \int_{\Omega} \frac{\tilde H(|\nabla u|^2)}{2} - \hat G(x,u) dx.
\end{align}
Clearly $\tilde h$ satisfies assumption $(H1)$, $\hat g$ satisfies $(H2)$, and, by $(G)$, for every $x \in \Omega$, 
$t \mapsto \hat G(x, \sqrt t) = G(x,\sqrt t)$ is concave on $[0, M^2]$ and so (H3) is satisfied. We now show that $(H4)$ holds for all elements in $\mathcal A$. Note that these might not be 
all the critical points of $\tilde I$. We argue as in \cite[Lemma 2.2]{Corsato}. Let $u\in \mathcal A$ and note that $u$ solves the uniformly elliptic linear equation 
$-\sum_{i,j=1}^N a_{ij}\partial_{x_ix_j} u-c\, u = \rho$, where  
\begin{equation}\label{aij}
a_{ij}:=a(|\nabla u|^2)\delta_{ij}+2a'(|\nabla u|^2)\partial_{x_i}u\partial_{x_j}u\quad \text{ with }a(s):=(1-s)^{-\frac{1}{2}},
\end{equation}
$\delta_{ij}$ is the Kronecker delta, $c(x):=\int_0^1g_1'(x,s u(x))\, ds$, and $\rho(x):=g_2(x,u(x))$  for a.e. $x\in\Omega$. By $(g)$ we have that $\rho\geq 0$ and $c\in L^\infty({\Omega})$. By Hopf's Lemma we obtain that $\frac{\partial u}{ \partial \eta} < 0$ on $\partial \Omega$. Therefore, for every $u,v\in\mathcal A$ there is $\delta>1$ such that $\frac{1}{\delta} v \leq u \leq \delta v$.
The result now follows from  Theorem \ref{th:applicationp-laplace}, iv) and ii). 
\end{proof}

\begin{proof}[Proof of Corollary {\rm{\ref{coro:ACuP}}}]
The uniqueness follows from Theorem \ref{ctp:thm}. We now show that 
\[
{\mathcal A}:=\{u\in C^{2,\alpha}(\overline{\Omega}); u>0 \text{ in }\Omega \text{ and }u \text{ satisfies \eqref{ctp}}\}
\] 
is nonempty. Here and below $\alpha\in(0,1)$ denotes possibly different H\"{o}lder exponents. Let
\begin{equation}\label{I:def}
\hat I:K_0\to \mathbb R;\qquad \hat I(u) = \int_{\Omega} \left(1-\sqrt{1-|\nabla u|^2}\right) - \hat G(x,u) \, dx,
\end{equation}
with $\hat G$ and $\hat g$ as in the proof of Theorem \ref{ctp:thm}.  Since $\hat g$ is continuous and bounded in $\overline{\Omega}\times \mathbb R$, \cite[Theorem 1]{Mawhin} implies that $\hat I$ attains its infimum at $\bar u\in {K_0}\cap W^{2,p}(\Omega)$ for some $p>N$ with $\|\nabla \bar u\|_{L^\infty(\Omega)}<1$ and
$\bar{u}$  solves \eqref{ctp} with $g$ replaced by $\hat g$.  By \eqref{asgl} and the inequality $1 - \sqrt{1-t} \leq t$ for $t\in[0,1]$, one has $\hat I(\varepsilon \varphi_1) < 0$ for $\varepsilon > 0$ small enough, where $\varphi_1$ is the positive eigenfunction associated to $\lambda_1$ normalized in $L^2(\Omega)$. Therefore, $\hat I(\bar{u}) < 0$ and hence $\bar{u} \not \equiv 0$. Moreover, since $t\mapsto \hat G(x,t)$ is even by construction, we have that $\hat I(|\bar u|)=\hat I(\bar u)$, 
and consequently $\bar v:= |\bar u|\in {K_0}$ is also a critical point of $\hat I$ (in the sense of \cite{Mawhin}). Therefore \cite[Theorem 1]{Mawhin} implies that $\bar v\in {K_0}\cap W^{2,p}(\Omega)$ for some $p>N$ with $\|\nabla \bar v\|_{L^\infty(\Omega)}<1$ and solves \eqref{ctp}, since $\hat g(x,\bar v)=g(x,\bar v)$ 
 by \eqref{unif:bound:1}.

Arguing as in the proof of Theorem {\rm{\ref{ctp:thm}}},
$-\sum_{i,j=1}^N a_{ij}\partial_{x_ix_j} \bar v= \eta$ with $a_{ij}$ as in \eqref{aij} and $\eta(x):=g(x,\bar v(x))$.  Since $\eta \in C^{0,\alpha}(\overline{\Omega})$ and $\bar v\in C^{1,\alpha}(\overline{\Omega})$ by Sobolev embeddings, we have that $a_{ij}\in C^{0,\alpha}(\overline{\Omega})$. Then \cite[Theorem 6.14]{GilbargTrudinger} yields that $\bar v\in C^{2,\alpha}(\overline{\Omega})$.  Finally, $\bar v>0$ in $\Omega$ by the maximum principle. 
\end{proof}

\section{Problems involving fractional Laplacians}\label{section:fractionalLaplacian}

In this section we use Theorem \ref{th:abstracttheorem} to obtain new uniqueness results for nonlocal operators.

\medskip
Let $s\in(0,1)$ and $\Omega\subset\mathbb R^N$, $N\geq 1$ be a bounded smooth domain and let
\begin{align*}
{\mathcal H}_0^s(\Omega):=\left\{\, u\in L^2({\mathbb R}^N); \|u\|_{H^s}:=\left(\int_{\mathbb R^N}\int_{\mathbb R^N} \frac{|u(x)-u(y)|^2}{|x-y|^{N+2s}}\, dxdy\right)^{\frac{1}{2}}<\infty\quad \text{and}\quad u\equiv 0\text{ in }\mathbb {\mathbb R}^N\backslash \Omega\, \right\}
\end{align*}
denote the \emph{fractional Sobolev space of order $s$} and
\begin{align*}
 (-\Delta)^s u(x) = P.V. \int_{\mathbb R^N} \frac{u(x)-u(y)}{|x-y|^{N+2s}}\, dy = \lim_{\varepsilon\to 0}\int_{|x-y|\geq\varepsilon} \frac{u(x)-u(y)}{|x-y|^{N+2s}}\, dy 
\end{align*}
denote the \emph{fractional Laplacian of order $s$}, where we have omitted any normalization constant for simplicity.  We say that $u\in C^{0,s}(\overline{\Omega})\cap {\mathcal H}^s_0(\Omega)$ is a weak solution of 
\begin{equation}\label{frac:eq}
(-\Delta)^s u= g(x,u) \quad \text{in }\Omega,\qquad u\equiv 0 \quad \text{in }\mathbb R^N\backslash\Omega\,,
 \end{equation}
 if
\begin{equation}\label{weak:frac:sol}
 \frac{1}{2}\int_{\mathbb R^N}\int_{\mathbb R^N} \frac{(u(x)-u(y))(\phi(x)-\phi(y))}{|x-y|^{N+2s}}\, dxdy=\int_{\Omega} g(x,u(x))\phi(x) \, dx
  \end{equation}
for all $\phi\in {\mathcal H}^s_0(\Omega)$. 
 Let $X$ be a subspace of ${\mathcal H}_0^s(\Omega)$ such that the energy associated to \eqref{frac:eq}
\begin{align}\label{frac:ene}
I(u):=\frac{1}{4}\int_{\mathbb R^N}\int_{\mathbb R^N} \frac{|u(x)-u(y)|^2}{|x-y|^{N+2s}}\, dxdy -
\int_\Omega G(x, u(x)) \ dx \qquad \textrm{for all } u \in X
\end{align}
is well defined and Fr\'{e}chet differentiable, where $G$ is given by \eqref{eq:gdf} and
\begin{align}\label{G:frac}
\text{ the derivative of $u \mapsto \int_{\Omega} G(x,u)\,dx$ evaluated at $v\in X$ is $\int_{\Omega} g(x,u) v\, dx$.}
\end{align}
The choice of $X$ depends on the nonlinearity and for most applications one can consider $X=\mathcal{H}^s_0(\Omega)$ or $X=\mathcal{H}^s_0(\Omega)\cap L^p(\Omega)$ for some $p>2$. In the latter case we endow $X$ with the norm $\|u\|_X = \|u\|_{H^s} + \|u\|_{L^p(\Omega)}$.

\begin{theorem}\label{th:fractional}
Let $\Omega\subset\mathbb R^N$ be a bounded smooth domain, $s\in(0,1)$, and assume that
\begin{itemize}
\item [$(G)$] for every $x\in \Omega$, the function $t\mapsto G(x,\sqrt{t})$ is strictly concave in $[0, \infty)$,
\item [$(g)$] The function $g:\overline{\Omega} \times \mathbb R \to\mathbb R$ is continuous of the form $g=g_1+g_2$, with $g_1(x,0)=0$ and  $t\mapsto g_1(x,t)$ is locally Lipschitz continuous in $\mathbb R$ uniformly in $x$, and $g_2$ is continuous and nonnegative in $\overline{\Omega}\times [0, \infty)$. 
\end{itemize}
Let  $X=\mathcal{H}^s_0(\Omega)$ or $X=\mathcal{H}^s_0(\Omega)\cap L^p(\Omega)$ for some $p>2$ such that $I$ as in \eqref{frac:ene} is Fr\'{e}chet differentiable in $X$ and \eqref{G:frac} is satisfied. Then \eqref{frac:eq} has at most one positive weak solution $u\in C^{0,s}(\overline{\Omega})\cap {\mathcal H}^s_0(\Omega)$.
\end{theorem}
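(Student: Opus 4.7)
\medskip

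The plan is to apply Theorem \ref{th:abstracttheorem} to the set $A$ of positive weak solutions with the standard path
\[
\gamma(t)(x) := \sqrt{(1-t)u(x)^2 + tv(x)^2}, \qquad t \in [0,1],\ x \in \mathbb R^N,
\]
obtained by taking $Q(z) = z^2$ in the notation of Section \ref{sec:aux-res}. Since $u,v \in C^{0,s}(\overline\Omega)$ vanish outside $\Omega$, so does $\gamma(t)$, and condition a) is immediate. The remaining tasks are: pointwise comparability $\delta^{-1} v \le u \le \delta v$ in $\Omega$, the local Lipschitz property b) in the $X$-norm, and the strict convexity c).

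For the comparability, assumption $(g)$ allows me to rewrite \eqref{frac:eq} as the linear equation
\[
(-\Delta)^s u - c(x)u = \rho(x) \quad \text{in } \Omega, \qquad c(x) := \int_0^1 \partial_t g_1(x,\tau u(x))\, d\tau \in L^\infty(\Omega), \quad \rho := g_2(\cdot,u) \ge 0.
\]
The fractional Hopf boundary lemma (Ros-Oton--Serra and related work) combined with interior positivity yields $c_0\, d(\cdot,\partial\Omega)^s \le u \le C_0\, d(\cdot,\partial\Omega)^s$ in $\overline\Omega$, and likewise for $v$, hence $\delta^{-1}v \le u \le \delta v$.

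The convexity c) rests on two elementary observations. Writing $A(t) := (1-t)u(x)^2 + tv(x)^2$ and $B(t) := (1-t)u(y)^2 + tv(y)^2$, both affine in $t$, one has
\[
|\gamma(t)(x) - \gamma(t)(y)|^2 = A(t) + B(t) - 2\sqrt{A(t)\,B(t)};
\]
since $(\alpha,\beta)\mapsto \sqrt{\alpha\beta}$ is concave on $[0,\infty)^2$, the right-hand side is convex in $t$ for every $x,y$, and integrating against $|x-y|^{-N-2s}$ yields convexity of the Gagliardo part of $I(\gamma(t))$. On the potential side, $(G)$ forces $t \mapsto -G(x,\gamma(t)(x)) = -G\bigl(x,\sqrt{(1-t)u(x)^2 + tv(x)^2}\bigr)$ to be strictly convex whenever $u(x) \ne v(x)$. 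Adding the two contributions, $t \mapsto I(\gamma(t))$ is strictly convex as soon as $u \not\equiv v$.

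The main obstacle is b): since Corollary \ref{corollary_of_lemma:gpg} is formulated only for local Sobolev spaces, a direct argument is needed for the Gagliardo seminorm. Exploiting the algebraic identity $\gamma(t)^2 - ((1-t)u + tv)^2 = t(1-t)(v-u)^2$, I would decompose
\[
\gamma(t) - u = t(v-u) + t(1-t)\,\Phi_t(u,v), \qquad \Phi_t(a,b) := \frac{(b-a)^2}{\sqrt{(1-t)a^2 + tb^2} + (1-t)a + tb}.
\]
The $L^2$- and (if needed) $L^p$-parts follow from the pointwise bound $|\gamma(t) - u| \le Ct\,|v-u|$, valid on the comparability set. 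For the Gagliardo seminorm, the linear summand contributes $t\,[v-u]_{\mathcal H^s}$. The nonlinear summand is handled by noting that $\Phi_t$ is $1$-homogeneous and smooth on the cone $\{(a,b):\delta^{-1}a \le b \le \delta a\}$, hence globally Lipschitz there with constant uniform in $t \in [0,1]$; this controls the part of the integral supported in $\Omega\times\Omega$. The mixed region $\Omega \times (\mathbb R^N\setminus\Omega)$ is treated via the pointwise estimate $|t(1-t)\Phi_t(u,v)| \le Ct\,u$ combined with the fractional Hardy inequality $\int_\Omega u^2\, d(x,\partial\Omega)^{-2s}\, dx \le C\|u\|_{\mathcal H^s}^2$. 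Collecting everything gives $\|\gamma(t) - u\|_X \le Ct$ near $t=0$, and Theorem \ref{th:abstracttheorem} iii) then yields the stated uniqueness.
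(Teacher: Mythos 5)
Your overall architecture coincides with the paper's: the same path $\gamma(t)=((1-t)u^2+tv^2)^{1/2}$, comparability $\delta^{-1}v\le u\le \delta v$ obtained by rewriting the equation with $c\in L^\infty(\Omega)$, $\rho\ge 0$ and applying the fractional Hopf lemma together with the $C^{0,s}$ bound $u,v\lesssim d(\cdot,\partial\Omega)^s$, and strict convexity coming from $(G)$ on top of convexity of the Gagliardo part. Within this frame, two of your steps take a genuinely different route. For condition c), the paper proves the endpoint inequality \eqref{eq:igm} via a substitution and the Minkowski inequality and then upgrades it to convexity in $t$ through the reparametrization trick \eqref{eq:globalargument}; your identity $|\gamma(t)(x)-\gamma(t)(y)|^2=A(t)+B(t)-2\sqrt{A(t)B(t)}$ with $A,B$ affine in $t$, combined with concavity of the geometric mean on $[0,\infty)^2$, yields convexity in $t$ for every pair $(x,y)$ in one stroke and dispenses with the reparametrization for this term --- a cleaner argument. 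For condition b), the paper factors $(\gamma(t)-u)/t=u\,z(w,t)$ with $w=\frac{v}{u}\chi_\Omega$ bounded (by Hopf plus H\"older regularity) and estimates differences of $u\,z(w,t)$ by $|u(x)-u(y)|+|v(x)-v(y)|$ via the mean value theorem; your decomposition $\gamma(t)-u=t(v-u)+t(1-t)\Phi_t(u,v)$, with $\Phi_t$ one-homogeneous and hence Lipschitz (uniformly in $t$) on the convex comparability cone, achieves the same control of the Gagliardo seminorm over $\Omega\times\Omega$ and is equally valid.

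There is one flaw, though a repairable one: on the mixed region $\Omega\times(\mathbb R^N\setminus\Omega)$ you invoke the fractional Hardy inequality $\int_\Omega u^2\,d(x,\partial\Omega)^{-2s}\,dx\le C\|u\|_{\mathcal H^s}^2$. For bounded Lipschitz domains this inequality is known to fail at $s=\frac12$, so as written your argument breaks down for that value of $s$. The detour is in fact unnecessary: since $u\equiv 0$ on $\mathbb R^N\setminus\Omega$, one has $u(x)^2=(u(x)-u(y))^2$ whenever $y\notin\Omega$, so your pointwise bound $\Phi_t(u,v)\le C\,u$ on the comparability set gives $\iint_{\Omega\times(\mathbb R^N\setminus\Omega)}\Phi_t(u,v)(x)^2\,|x-y|^{-N-2s}\,dy\,dx\le C^2\,[u]_{H^s}^2$ directly, with no Hardy inequality; alternatively, your own two-sided estimate $u\sim d(\cdot,\partial\Omega)^s$ makes $u^2\,d(\cdot,\partial\Omega)^{-2s}$ bounded on $\Omega$. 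This is essentially how the paper treats that region (when $u(y)=0$ it bounds the integrand by a multiple of $|u(x)-u(y)|$, so the contribution is absorbed into the seminorm of $u$). With that one-line substitution your proof is complete and correct for all $s\in(0,1)$.
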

\begin{proof}
Let $X=\mathcal{H}^s_0(\Omega)\cap L^p(\Omega)$ endowed with the norm $\|u\|_X = \|u\|_{H^s} + \|u\|_{L^p(\Omega)}$ (the case $X=\mathcal{H}^s_0(\Omega)$ follows similarly). We use Theorem \ref{th:abstracttheorem} with $A\subset C^{0,s}(\overline{\Omega})\cap X$ being the set of nontrivial nonnegative weak solutions of \eqref{frac:eq}. Observe that, by the maximum principle (see, for example, \cite{jarohs:hopf,jarohs:phd}), any element of $A$ is positive in $\Omega$. Fix $u,v\in A$. For $t\in[0,1]$, let $\gamma(t):=((1-t)u^2 + tv^2)^{\frac{1}{2}}$.  We show first that $\gamma:[0,1] \to X$ is well defined and Lipschitz at $t=0$. We start with the $H^s$-norm, that is, we show that
\begin{align}\label{frac:Lip:zero}
 \left\|\frac{\gamma(t)-\gamma(0)}{t}\right\|_{H^s}\leq C(\|u\|_{H^s}+\|v\|_{H^s})\qquad \text{ for }t\in(0,1],
\end{align}
and for some $C>0$. Indeed, note that
\begin{align*}
 \frac{\gamma(t)-\gamma(0)}{t}=\frac{v^2-u^2}{\gamma(t)+u}=u\frac{w^2-1}{(1-t+tw^2)^{\frac{1}{2}}+1}=u\, z(w,t)\,,
\end{align*}
where $w:=\frac{v}{u}\chi_{\Omega}$, $z(\xi,t):=\frac{\xi^2-1}{(1-t+t\xi^2)^{\frac{1}{2}}+1}$, and $\chi_{\Omega}$ is the characteristic function of $\Omega$. Since $u,v\in C^{0,s}(\overline{\Omega})$, there is $M>0$ such that $0<v<M\delta^s$ in ${\Omega}$, where $\delta(x):=\operatorname{dist}(x,\mathbb R^N\backslash\Omega)$. Moreover, $u$ is a weak solution of $(-\Delta)^s u-c(x)u=\rho$, with $c(x):=\int_0^1g_1'(x,s u(x))\, ds$, and $\rho(x):=g_2(x,u(x))$  for a.e. $x\in\Omega$. By $(g)$ we have that $\rho\geq 0$ and $c\in L^\infty({\Omega})$. Then, by Hopf's Lemma (see \cite{jarohs:hopf}) there is $m>0$ such that $u>m\delta^s$ in ${\Omega}$. Therefore $w<\frac{M}{m}$ in $\Omega$. Moreover, for $x,y\in\Omega$, 
\begin{align*}
u(x)z(w(x),t)-u(y)z(w(y),t)=&(u(x)-u(y))z(w(x),t)-u(y)(z(w(y),t)-z(w(x),t))
\end{align*}
and by the Mean value Theorem
\begin{align*}
 u(y)|z(w(x),t)-z(w(y),t)|&\leq C_1u(y)|w(x)-w(y)|= C_1|v(x)-v(y)+\frac{v(x)}{u(x)}(u(y)-u(x))|
 \\&\leq C_2(|v(x)-v(y)|+|u(y)-u(x)|)\,,
\end{align*}
where $C_1:=\sup\limits_{k\in[0,\frac{M}{m}] ,t\in[0,1]}|\partial_w z(k,t)|<\infty$ and $C_2:=C_1+\frac{M}{m}$.  

On the other hand, if $x\in\Omega$ and $y\in\mathbb R^N\backslash \Omega$, then $u(y) = 0$ and
\[
|u(x)z(w(x),t)-u(y)z(w(y),t)|\leq C_ 1\|w\|_{L^\infty(\Omega)}|u(x)-u(y)|\,.
\]
These estimates readily imply \eqref{frac:Lip:zero} and therefore $\gamma$ is Lipschitz at 0 with respect to the $H^s$-norm. Moreover, by Corollary \ref{corollary_of_lemma:gpg} we have that $\gamma$ is Lipschitz at $t=0$ with respect to the $L^p$-norm, and therefore $\gamma$ is Lipschitz at 0 with respect to the norm in $X$.

Finally, to prove the strict convexity of $t\mapsto I(\gamma(t))$ it suffices (see \eqref{eq:globalargument} in the proof of Lemma \ref{lem:gin}) to show that
\begin{equation}\label{eq:igm}
(\gamma(t)(x)-\gamma(t)(y))^2\leq (1-t)(u(x)-u(y))^2+t(v(x)-v(y))^2 \qquad \text{ for }x, y \in \mathbb{R}^N \,
\end{equation}
(recall that $t\mapsto -\int_\Omega G(x, \gamma(t)(x)) \ dx$ is strictly convex by assumption $(G)$).  Indeed, after the substitution $a = \sqrt{1 - t}\,u(x)$, $b = \sqrt{1 - t}\,u(y)$, $c = \sqrt{t}\,v(x)$, and $d = \sqrt{t}\,v(y)$ this is equivalent to 
\begin{equation}
\Big|(a^2 + c^2)^{\frac{1}{2}} - (b^2 + d^2)^{\frac{1}{2}}\Big| \leq \Big((a - b)^2 + (c - d)^2\Big)^{\frac{1}{2}} \,,
\end{equation}
which follows from the Minkowski inequality.  Thus all the assumptions from Theorem \ref{th:abstracttheorem} are satisfied and the uniqueness follows. 
\end{proof}

\begin{remark}
Note that whenever (H4) from Section \ref{sec:application2order} holds, we can 
use the path $\gamma(t):=((1-t)u^p + tv^p)^{\frac{1}{p}}$, $p \geq 1$ to show
the uniqueness of a nontrivial nonnegative critical point of the functional
\begin{equation}
I(u) = \frac{1}{2p} \int_{\mathbb{R}^N}\int_{\mathbb{R}^N} \frac{|u(x)-u(y)|^{p}}{|x-y|^{N+sp}}\, dx \, dy - 
\int_\Omega G(x, u(x)) \ dx \qquad \textrm{for all } u \in W^{s, p}_0(\Omega) \,,
\end{equation}
where $t \mapsto G(x, t^{1/p})$ is strictly concave, cf. (H3). This functional is related to the fractional
$p$-Laplacian $(-\Delta)_p^s$, $s \in (0, 1)$. Assumption (H4) can be deduced from a Hopf-type lemma, but, to our best knowledge, such result 
is established only if $p = 2$. 

\end{remark}

Let $\Lambda_1>0$ and $\Phi_1\in C^{0,s}(\overline{\Omega})\cap {\mathcal H}^s_0(\Omega)$ be such that 
\begin{equation}\label{1st:frac:eigenf}
(-\Delta)^s \Phi_1 = \Lambda_1 \Phi_1 \quad \text{ in }\Omega,\qquad \Phi_1 > 0 \quad \text{ in }\Omega, \qquad \Phi_1 \equiv 0 \quad \text{ in }\mathbb R^N\backslash\Omega,
\end{equation}
see, for example, \cite[Theorem 5.23]{jarohs:phd}.  Then Theorem \ref{th:fractional} and standard minimization arguments imply the following.

\begin{corollary}\label{cor:frac}
Let $\Omega\subset\mathbb R^N$, $N\geq 1$ be a bounded domain of class $C^{2}$, $s\in(0,1)$, $p>2$, and $k>{\Lambda_1}$. Then \eqref{frac:eq} with $g(x,u)=ku-|u|^{p-2}u$ has a unique positive weak solution $u\in C^{0,s}(\overline{\Omega})\cap {\mathcal H}^s_0(\Omega)$.
\end{corollary}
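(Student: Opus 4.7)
The plan is to split the proof into \emph{uniqueness}, which is a direct application of Theorem \ref{th:fractional}, and \emph{existence}, which follows from a routine direct method, together with a short regularity argument to place the solution in $C^{0,s}(\overline{\Omega})$.

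For \textbf{uniqueness}, I would set $g(x,u)=ku-|u|^{p-2}u$, so that
\[
G(x,t)=\tfrac{k}{2}t^{2}-\tfrac{1}{p}|t|^{p},\qquad G(x,\sqrt{t})=\tfrac{k}{2}t-\tfrac{1}{p}t^{p/2}\ \text{on}\ [0,\infty).
\]
Since $p>2$, the map $t\mapsto -t^{p/2}/p$ is strictly concave and the first term is linear, so hypothesis $(G)$ of Theorem \ref{th:fractional} holds with strict concavity. For $(g)$, I would decompose $g=g_{1}+g_{2}$ with $g_{1}(x,t):=-|t|^{p-2}t$ (locally Lipschitz in $t$, uniformly in $x$, with $g_{1}(x,0)=0$) and $g_{2}(x,t):=kt$, which is continuous and nonnegative on $\overline{\Omega}\times[0,\infty)$ since $k>0$. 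Next I would choose $X=\mathcal{H}^{s}_{0}(\Omega)\cap L^{p}(\Omega)$ and check that the energy $I$ in \eqref{frac:ene} is Fr\'echet differentiable on $X$ and that \eqref{G:frac} holds; both are standard given the polynomial growth of $g$. Theorem \ref{th:fractional} then yields uniqueness of the positive weak solution in $C^{0,s}(\overline{\Omega})\cap\mathcal{H}^{s}_{0}(\Omega)$.

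For \textbf{existence}, I would minimize
\[
I(u)=\tfrac{1}{4}\iint_{\mathbb{R}^{N}\times\mathbb{R}^{N}}\frac{|u(x)-u(y)|^{2}}{|x-y|^{N+2s}}\,dx\,dy-\tfrac{k}{2}\int_{\Omega}u^{2}\,dx+\tfrac{1}{p}\int_{\Omega}|u|^{p}\,dx
\]
over $X$. Since $p>2$, the $L^{p}$ term controls the (linear) $L^{2}$ term at infinity and $I$ is coercive and weakly lower semicontinuous on $X$, hence admits a minimizer $u_{0}\in X$. Testing with $\varepsilon\Phi_{1}$ gives
\[
I(\varepsilon\Phi_{1})=\tfrac{\varepsilon^{2}}{2}(\Lambda_{1}-k)\|\Phi_{1}\|_{L^{2}}^{2}+\tfrac{\varepsilon^{p}}{p}\|\Phi_{1}\|_{L^{p}}^{p}<0
\]
for $\varepsilon>0$ small, using $k>\Lambda_{1}$ and $p>2$. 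Hence $I(u_{0})<0$, so $u_{0}\not\equiv 0$. Since $|u(x)-u(y)|\geq\bigl||u(x)|-|u(y)|\bigr|$, we have $I(|u_{0}|)\leq I(u_{0})$, so we may assume $u_{0}\geq 0$, and then $u_{0}$ solves \eqref{frac:eq} weakly. The strong maximum principle for the fractional Laplacian (e.g.\ \cite{jarohs:hopf}) upgrades this to $u_{0}>0$ in $\Omega$.

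For \textbf{regularity}, I would invoke standard interior and boundary regularity for the Dirichlet problem for $(-\Delta)^{s}$ on $C^{2}$ domains: writing the equation as $(-\Delta)^{s}u_{0}=f$ with $f=ku_{0}-|u_{0}|^{p-2}u_{0}\in L^{\infty}(\Omega)$ (after an $L^{\infty}$-bound obtained by standard Moser or De Giorgi iteration, using that the nonlinearity has the correct sign for large $u_{0}$), one obtains $u_{0}\in C^{0,s}(\overline{\Omega})$. Combined with the uniqueness step, this yields the unique positive weak solution in $C^{0,s}(\overline{\Omega})\cap\mathcal{H}^{s}_{0}(\Omega)$. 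The main obstacle I anticipate is purely bookkeeping --- namely, checking Fr\'echet differentiability of $I$ on $X$ and producing the $L^{\infty}$ bound needed for the $C^{0,s}$ regularity --- both of which are routine but technical.
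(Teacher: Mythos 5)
Your proposal is correct and follows essentially the same route as the paper: uniqueness by verifying hypotheses $(G)$ and $(g)$ of Theorem \ref{th:fractional} (your decomposition $g_1=-|t|^{p-2}t$, $g_2=kt$ is valid since $p>2$ and $k>\Lambda_1>0$), existence by global minimization on $X=\mathcal{H}^s_0(\Omega)\cap L^p(\Omega)$ with the test function $\varepsilon\Phi_1$, the passage to $|u|$, the maximum principle for positivity, and $C^{0,s}$ regularity from an $L^\infty$ bound together with \cite{regularity:frac}. The only (minor) divergence is the $L^\infty$ bound itself: where you invoke Moser/De Giorgi iteration, the paper gets it more directly by testing the weak formulation with $\phi=(k^{1/(p-2)}-u)^-$, which yields the explicit estimate $\|u\|_{L^\infty(\mathbb R^N)}\leq k^{1/(p-2)}$ in one line.
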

\begin{proof}
The uniqueness follows from Theorem \ref{th:fractional}. The existence follows from a standard global minimization argument. We recall that $X := \mathcal{H}^s_0(\Omega)\cap L^p(\Omega)$ is (compactly) embedded into $L^2(\Omega)$; see, for example, \cite[Theorem 7.1]{Hitchhiker}. It is standard to see that $I$ is bounded from below and 
weakly lower semi-continuous, and therefore a global minimizer $u\in X$ is attained. Moreover,  $u$ is nontrivial because the condition $k > {\Lambda_1}$ guarantees that $I(\varepsilon\Phi_1)=\int_\Omega(\frac{\Lambda_1}{2}-\frac{k}{2}+ \frac{\varepsilon^{p-2}}{p}\Phi_1^{p-2})\varepsilon^2\Phi_1^2\ dx<0$ for $\epsilon \sim 0$, by \eqref{1st:frac:eigenf}. Finally, since $|u|\in X$ and $I(|u|)\leq I(u)$, we may assume that $u\geq 0$ in $\mathbb R^N$.  

It only remains to show that $u>0$ in $\Omega$ and $u\in C^s(\overline{\Omega})$.  Let $\phi=(k^{1/(p-2)}-u)^-\in X$. Then \eqref{weak:frac:sol}, 
the fact that the positive and the negative part of a function have disjoint supports, and  $u(k-|u|^{p-2})\phi \leq 0$ 
yield
\begin{align*}
 0&=\frac{1}{2}\int_{\mathbb R^N}\int_{\mathbb R^N} \frac{(u(x)-u(y))(\phi(x)-\phi(y))}{|x-y|^{N+2s}}\, dxdy-\int_{\Omega} u(x)(k-|u(x)|^{p-2})\phi(x)\, dx\\
 &\geq \frac{1}{2}\int_{\mathbb R^N}\int_{\mathbb R^N} \frac{(\phi(x)-\phi(y))^2}{|x-y|^{N+2s}}\, dxdy-\int_{\Omega} u(x)(k-|u(x)|^{p-2})\phi(x) \, dx\geq 0\,.
  \end{align*}
 Therefore $\|u\|_{L^\infty(\mathbb R^N)}\leq k^{1/(p-2)}$. This implies that the right-hand side of \eqref{frac:eq} is nonnegative and bounded. It follows by regularity, see \cite{regularity:frac}, that $u\in C^{0,s}(\overline{\Omega})$.  The strict positivity $u>0$ in $\Omega$ follows from the maximum principle; see, for example, \cite{jarohs:hopf,jarohs:phd}. This ends the proof.
\end{proof}

\section{Hamiltonian elliptic systems}\label{sec-HS}

Let $\Omega \subset {\mathbb R}^N$, $N\geq 1$, be a bounded smooth domain and consider the Hamiltonian elliptic system
\begin{equation}\label{eq:more general system}
\left\{
\begin{array}{l}
 - \Delta u = |v|^{q-1}v \quad \text{in} \quad \Omega,\\
 -\Delta v = |u|^{p-1}u \quad \text{in} \quad \Omega,\\
 u, v = 0 \quad \text{on} \quad \partial \Omega
 \end{array}
 \right.
\end{equation}
in the sublinear case, a notion which, for these systems, corresponds to
\begin{equation}\label{eq:hipothesissublinear}
p, q> 0 \quad \text{and} \quad p\cdot q < 1 \,.
\end{equation}
The uniqueness of a positive solution to \eqref{eq:more general system} was proved in \cite[Theorem 3]{Dalmasso2000}, see also \cite{montenegro}, using Krasnoselskii \cite{Krasnoselskii} type argument. Here we present an alternative proof based on Theorem \ref{th:abstracttheorem}.

\begin{theorem}\label{th:hamiltoniansystem}
If \eqref{eq:hipothesissublinear} is satisfied, then \eqref{eq:more general system} has a unique positive classical solution. 
\end{theorem}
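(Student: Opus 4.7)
The plan is to apply Theorem~\ref{th:abstracttheorem} in the dual Cl\'ement--van der Vorst variational framework. Let $A:=(-\Delta)^{-1}$ denote the Dirichlet solution operator on $\Omega$, whose positive Green kernel we denote by $G(x,y)$, and introduce the dual unknowns
\begin{equation}\label{plan:dualvars}
f:=v^{q}, \qquad g:=u^{p}.
\end{equation}
Then $u=Af$, $v=Ag$, and \eqref{eq:more general system} is equivalent to $Af=g^{1/p}$ and $Ag=f^{1/q}$, which are the Euler--Lagrange equations of
\begin{equation}\label{plan:J}
J(f,g) \;=\; \frac{q}{q+1}\int_\Omega f^{\frac{q+1}{q}}\,dx \;+\; \frac{p}{p+1}\int_\Omega g^{\frac{p+1}{p}}\,dx \;-\; \int_\Omega g\,(Af)\,dx
\end{equation}
on the cone of positive functions in $L^{(q+1)/q}(\Omega)\times L^{(p+1)/p}(\Omega)$.

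First I would verify that every positive classical solution $(u,v)$ of \eqref{eq:more general system} is $C^{2,\alpha}(\overline{\Omega})$ by elliptic regularity, strictly positive in $\Omega$ by the maximum principle, and has $\partial_\nu u,\partial_\nu v<0$ on $\partial\Omega$ by Hopf's lemma. Lemma~\ref{lemma:gradientboundarycomparison} then ensures that any two positive solutions $(u_0,v_0)$ and $(u_1,v_1)$ are mutually comparable in $\Omega$ in the sense of \eqref{eq:hipderivative2}, and the same is true of the associated dual pairs $(f_0,g_0)$ and $(f_1,g_1)$. I would then connect these two critical points of $J$ by the componentwise path
\begin{equation}\label{plan:path}
\gamma(t):=(f(t),g(t)), \quad f(t):=\bigl((1-t)f_0^{\frac{q+1}{q}}+tf_1^{\frac{q+1}{q}}\bigr)^{\!\frac{q}{q+1}}, \quad g(t):=\bigl((1-t)g_0^{\frac{p+1}{p}}+tg_1^{\frac{p+1}{p}}\bigr)^{\!\frac{p}{p+1}},
\end{equation}
chosen so that the two power integrals in \eqref{plan:J} become affine in $t$ along $\gamma$; this reduces the convexity question for $t\mapsto J(\gamma(t))$ entirely to the coupling term $-\int_\Omega g(t)\,Af(t)\,dx$. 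Property b) of Theorem~\ref{th:abstracttheorem} then follows by applying Corollary~\ref{corollary_of_lemma:gpg} componentwise in $L^{(q+1)/q}(\Omega)$ and $L^{(p+1)/p}(\Omega)$.

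The main obstacle, and the place where the sublinearity hypothesis $pq<1$ enters, is the strict convexity of the coupling term along $\gamma$. Writing $Af(t)(x)=\int_\Omega G(x,y)f(t)(y)\,dy$ and invoking Fubini, this reduces to the pointwise claim that for every $(x,y)\in\Omega\times\Omega$ the function
\begin{equation}
t \;\longmapsto\; f(t)(y)\,g(t)(x)\;=\;\bigl((1-t)\mathcal A_0+t\mathcal A_1\bigr)^{\!\frac{q}{q+1}}\bigl((1-t)\mathcal B_0+t\mathcal B_1\bigr)^{\!\frac{p}{p+1}}
\end{equation}
is (strictly) concave, where $\mathcal A_i:=f_i(y)^{(q+1)/q}$ and $\mathcal B_i:=g_i(x)^{(p+1)/p}$. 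A direct Hessian computation shows that $(a,b)\mapsto a^{q/(q+1)}b^{p/(p+1)}$ is strictly concave on $(0,\infty)^2$ precisely when $\tfrac{q}{q+1}+\tfrac{p}{p+1}<1$, and this condition is equivalent to
\begin{equation}
\frac{q}{q+1}+\frac{p}{p+1}=\frac{2pq+p+q}{(p+1)(q+1)}<1 \;\Longleftrightarrow\; pq<1.
\end{equation}
Composing with the affine interpolation in $(a,b)$ transfers the concavity to $t$, and integration against the positive kernel $G(x,y)$ over $\Omega\times\Omega$ preserves it, yielding strict concavity of $t\mapsto\int_\Omega g(t)\,Af(t)\,dx$ as soon as $(f_0,g_0)\neq(f_1,g_1)$, and hence strict convexity of the full coupling term.

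With hypotheses a)--c) of Theorem~\ref{th:abstracttheorem} verified for $J$ and $\gamma$ on the set of positive critical points, part iii) of that theorem implies that this set is a singleton and hence that \eqref{eq:more general system} admits at most one positive classical solution. Existence under \eqref{eq:hipothesissublinear} is classical and can for instance be obtained by minimising $J$ on the positive cone: the inequality $\tfrac{q}{q+1}+\tfrac{p}{p+1}<1$, again equivalent to $pq<1$, is precisely the condition under which the indefinite coupling $\int g\,Af$ is dominated by the two convex power terms via H\"older--Young, giving coercivity together with weak lower semicontinuity and hence the existence of a positive minimiser, which by standard elliptic regularity is a classical solution.
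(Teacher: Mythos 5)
Your proposal is correct, and its skeleton coincides with the paper's proof: the dual Cl\'ement--van der Vorst functional (your $J$ is the paper's $\Phi$ from \eqref{phi}, up to swapping the names of $f$ and $g$), the same componentwise path with exponents $\frac{q+1}{q}$ and $\frac{p+1}{p}$ chosen so that the two power integrals are affine in $t$, the same verification of the Lipschitz condition b) via Hopf's lemma, Lemma \ref{lemma:gradientboundarycomparison} and Corollary \ref{corollary_of_lemma:gpg} applied componentwise, and existence via coercive minimization. Where you genuinely depart is at the heart of the matter, the strict concavity of the coupling term $t\mapsto\int_\Omega g(t)\,Af(t)\,dx$. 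The paper (Proposition \ref{prop:pathsystem}) first invokes a monotonicity-in-the-exponent lemma (Lemma \ref{lemma:monotonicitypower}) to replace $\frac{q+1}{q}$ by the conjugate exponent $\beta$ of $\alpha=\frac{p+1}{p}$ (this is where $pq<1$ enters there, via $\beta<\frac{q+1}{q}$, and where strictness is generated), and then combines H\"older's inequality with a generalized Minkowski inequality for the operator $K$ (Proposition \ref{prop:convexityoperator}), obtaining a concavity statement that is pointwise in $x$. You instead unfold $K$ through its positive Green kernel, apply Fubini, and reduce to the elementary fact that $(a,b)\mapsto a^{q/(q+1)}b^{p/(p+1)}$ has negative definite Hessian exactly when $\frac{q}{q+1}+\frac{p}{p+1}<1$, which is equivalent to $pq<1$. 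Both arguments are correct: yours is more self-contained (no Minkowski integral inequality, no exponent-monotonicity lemma) and makes the role of $pq<1$ completely transparent, at the price of obtaining strictness only at the integral level after the positive-measure observation you make --- which is all that Theorem \ref{th:abstracttheorem} iii) requires --- whereas the paper's route yields the stronger pointwise-in-$x$ statement and isolates the kernel inequality of Proposition \ref{prop:convexityoperator} as a reusable tool for other operators with positive kernels.

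One small formal point: Theorem \ref{th:abstracttheorem} requires the functional to be Fr\'echet differentiable on a normed space, so you should define $J$ on all of $X=L^{(q+1)/q}(\Omega)\times L^{(p+1)/p}(\Omega)$ using $|f|^{(q+1)/q}$ and $|g|^{(p+1)/p}$, as the paper does for $\Phi$, rather than only on the positive cone; positive classical solutions are then genuine free critical points, while the path stays inside the cone, so your argument is otherwise unchanged. The same remark applies to the existence step: minimizing the extended functional over all of $X$ (with positivity of the minimizer recovered from the maximum principle, as in the paper's Step 2) avoids the issue that a cone-constrained minimizer a priori satisfies only a variational inequality.
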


\smallbreak

To prove Theorem \ref{th:hamiltoniansystem} we will treat \eqref{eq:more general system} by using the dual variational method. We mention that this approach has been used in \cite{ClementvanderVorst} in the case  $p> 1$ and $q>1$; see also \cite[Section 3]{BonheuredosSantosTavares} for general $p$ and $q$.
\smallbreak

For any $r > 0$ define $\phi_r: {\mathbb R} \rightarrow {\mathbb R}$ by
\[
\phi_r(t) = |t|^{r-1}t,  \quad t \in {\mathbb R}.
\]
Then we rewrite \eqref{eq:more general system} as
\[
u=(-\Delta)^{-1}(|v|^{q-1}v)=(-\Delta)^{-1}(\phi_q(v)),\quad v=(-\Delta)^{-1}(|u|^{p-1}u)=(-\Delta)^{-1}(\phi_p(u))
\]
and after introducing the new variables $f=|u|^{p-1}u=\phi_p(u)=-\Delta v$, $g=|v|^{q-1}v=\phi_q(v)=-\Delta u$, we are led to the system
\begin{equation}\label{euler-lagrange}
(-\Delta)^{-1}f = |g|^{\frac{1}{q}-1}g=\phi_q^{-1}(g), \qquad (-\Delta)^{-1}g = |f|^{\frac{1}{p}-1}f=\phi_p^{-1}(f).
\end{equation}

Define $K := (-\Delta)^{-1}$,  $X:=L^{\frac{p+1}{p}}(\Omega)\times L^{\frac{q+1}{q}}(\Omega)$, and since $\int_\Omega f K g \, dx = \int_{\Omega} g K f \, dx$, it follows that the system \eqref{euler-lagrange} appears as the Euler-Lagrange equations associated to the action functional
\begin{equation}\label{phi}
\Phi(f,g) = \int_\Omega \left(\frac{p}{p+1}|f|^{\frac{p+1}{p}} + \frac{q}{q+1}|g|^{\frac{q+1}{q}}\right) dx -\int_{\Omega} f K g \, dx, \quad (f,g) \in X.
\end{equation}
It is well known that $(f,g) \in X$ is a critical point of $\Phi$ if, and only if, $(u,v) = (Kg, Kf) = ( \phi_ p^{-1}(f), \phi_ q^{-1}(g))$ is a classical solution of \eqref{eq:more general system}. 

Within this framework, with $\Phi$ as defined above, we prove  Theorem \ref{th:hamiltoniansystem} with help of Theorem \ref{th:abstracttheorem}. We need same preliminaries results.

\begin{theorem}[Generalized Minkowski inequality]\label{t:mkw}
Let $F : X \times Y \to {\mathbb R}$ be a measurable function on a $\sigma$-finite measure space $X \times Y$ and let $\mu$, $\nu$ be the respective measures on $Y$ and $X$. Then, for any $1 \leq p < \infty$,
\begin{equation}
\left(\int_{X} \left( \int_Y |F| d\mu(\eta) \right)^{p} d\nu(\xi) \right)^{\frac{1}{p}} \leq 
\int_{Y} \left( \int_X |F|^p d\nu(\xi) \right)^{\frac{1}{p}} d\mu(\eta) \,.
\end{equation}
\end{theorem}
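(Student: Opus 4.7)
The plan is to reduce the claim to the duality characterization of the $L^p$ norm, then apply Fubini/Tonelli and the classical Hölder inequality in a single variable.

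First I would dispense with the case $p=1$: by Tonelli's theorem (applicable since $|F|\geq 0$ and the space is $\sigma$-finite) the two iterated integrals in the statement are actually equal, so the inequality is trivial. I would then focus on $1<p<\infty$, and without loss of generality assume $F \geq 0$ (replacing $F$ by $|F|$). Set
\[
g(\xi) := \int_Y F(\xi,\eta)\,d\mu(\eta),
\]
which is measurable in $\xi$ by Tonelli. The goal is to bound $\|g\|_{L^p(X,\nu)}$.

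Next I would invoke the duality representation of the $L^p$-norm: for $1<p<\infty$ with conjugate exponent $p'=p/(p-1)$,
\[
\|g\|_{L^p(X,\nu)} = \sup\Bigl\{\,\textstyle\int_X g(\xi) h(\xi)\,d\nu(\xi)\,:\, h\geq 0,\ \|h\|_{L^{p'}(X,\nu)}\leq 1\,\Bigr\}.
\]
(Restriction to $h\geq 0$ is valid since $g\geq 0$.) For any such $h$, Tonelli gives
\[
\int_X g(\xi)h(\xi)\,d\nu(\xi) = \int_Y \int_X F(\xi,\eta)\,h(\xi)\,d\nu(\xi)\,d\mu(\eta),
\]
and the inner integral is estimated by the classical Hölder inequality in the variable $\xi$:
\[
\int_X F(\xi,\eta)\,h(\xi)\,d\nu(\xi) \leq \Bigl(\int_X F(\xi,\eta)^p\,d\nu(\xi)\Bigr)^{1/p} \|h\|_{L^{p'}(X,\nu)}.
\]
Combining and using $\|h\|_{L^{p'}(X,\nu)}\leq 1$, I obtain
\[
\int_X g(\xi)h(\xi)\,d\nu(\xi) \leq \int_Y \Bigl(\int_X F(\xi,\eta)^p\,d\nu(\xi)\Bigr)^{1/p} d\mu(\eta),
\]
and taking the supremum over admissible $h$ yields the desired inequality.

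The only delicate point is justifying the duality characterization and the use of Tonelli in the case where the right-hand side is infinite; but if the right-hand side is $+\infty$ the inequality is vacuous, and otherwise $\sigma$-finiteness of $X\times Y$ together with $F\geq 0$ makes both applications of Tonelli fully rigorous. The duality formula itself is standard on $\sigma$-finite spaces for $1<p<\infty$, so no extra machinery is required. Hence the principal subtlety is simply bookkeeping of measurability and $\sigma$-finiteness, not any deep analytic step.
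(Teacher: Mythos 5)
Your proof is correct. Note, however, that the paper does not prove Theorem \ref{t:mkw} at all: it simply refers to \cite{Stein} and \cite[Theorem 202]{HardyLittlewoodPolya}, so your argument is not competing with an in-paper proof but rather supplying the standard one from the literature. Your route via the duality characterization $\|g\|_{L^p(X,\nu)}=\sup\{\int_X gh\,d\nu:\ h\geq 0,\ \|h\|_{L^{p'}(X,\nu)}\leq 1\}$, followed by Tonelli and the one-variable H\"older inequality, is essentially the classical Hardy--Littlewood--P\'olya argument in disguise: the textbook version tests directly against $h=g^{p-1}/\|g\|_{L^p}^{p-1}$ and then divides, which forces an explicit truncation $g_n=\min(g,n)\chi_{E_n}$ (with $E_n\uparrow X$ of finite $\nu$-measure) to handle the case $\|g\|_{L^p}=\infty$; your formulation buys cleaner bookkeeping by packaging that truncation inside the duality lemma, which is exactly where the $\sigma$-finiteness hypothesis is consumed. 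Your handling of the edge cases is sound: $p=1$ is an equality by Tonelli, the right-hand side being $+\infty$ makes the claim vacuous, and the measurability of $\eta\mapsto\bigl(\int_X F(\xi,\eta)^p\,d\nu(\xi)\bigr)^{1/p}$ follows from Tonelli applied to $F^p$. For the application actually made in the paper (Proposition \ref{prop:convexityoperator}, where $\nu=\delta_1+\delta_2$ is a two-point measure and $d\mu(y)=G(x,y)\,dy$ with $G$ the Green function), all these measure-theoretic subtleties are trivially satisfied, so either proof suffices.
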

For the proof of the above Generalized Minkowski inequality we refer to \cite{Stein} and \cite[Theorem 202]{HardyLittlewoodPolya}.

\begin{proposition}\label{prop:convexityoperator}
Let $\beta >1$ and $G_1, G_2 \in L^{\beta}(\Omega)$ be nonnegative functions and $K=(-\Delta)^{-1}$. Then the pointwise inequality holds:
\begin{equation}\label{fneprop}
((KG_1)^{\beta} + (KG_2)^\beta )^{\frac{1}{\beta}} 
\leq K (G_1^\beta + G_2^\beta)^{\frac{1}{\beta}} \quad \text{in} \ \ \Omega.
\end{equation}
Note that the variable $x$ for $KG_i(x)$ and $G_i(x)$ is not indicated to simplify notation.
\end{proposition}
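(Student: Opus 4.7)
The plan is to realize $K = (-\Delta)^{-1}$ as an integral operator with a nonnegative kernel and then apply the Generalized Minkowski inequality (Theorem \ref{t:mkw}) with a two-point counting measure. More precisely, I would first recall that for the Dirichlet Laplacian on the smooth domain $\Omega$ there is a Green's function $\mathcal{G} : \Omega \times \Omega \to [0, \infty]$, $\mathcal{G} \geq 0$, such that for any nonnegative $G \in L^\beta(\Omega)$
\[
(KG)(x) = \int_\Omega \mathcal{G}(x, y) G(y)\, dy \quad \text{for a.e. } x \in \Omega.
\]

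Fix $x \in \Omega$ and define $F : \Omega \times \{1, 2\} \to [0, \infty)$ by $F(y, i) := \mathcal{G}(x, y) G_i(y)$, where $\{1, 2\}$ is endowed with the counting measure and $\Omega$ with the Lebesgue measure. Then I would apply Theorem \ref{t:mkw} (with the roles of the two variables in that theorem chosen so that the $L^\beta$ norm is over $i \in \{1, 2\}$ and the integral is over $y \in \Omega$) to obtain
\[
\left( \sum_{i = 1}^{2} \left( \int_\Omega \mathcal{G}(x, y) G_i(y)\, dy \right)^{\beta} \right)^{\!\!1/\beta}
\leq \int_\Omega \left( \sum_{i=1}^{2} \bigl( \mathcal{G}(x, y) G_i(y) \bigr)^{\beta} \right)^{\!\!1/\beta} dy.
\]
On the right-hand side, since $\mathcal{G}(x, y) \geq 0$, the factor $\mathcal{G}(x, y)$ can be pulled out of the inner $\beta$-norm, giving exactly
\[
\bigl( (KG_1)(x)^\beta + (KG_2)(x)^\beta \bigr)^{1/\beta}
\leq \int_\Omega \mathcal{G}(x, y) \bigl( G_1(y)^\beta + G_2(y)^\beta \bigr)^{1/\beta} dy = K\bigl( (G_1^\beta + G_2^\beta)^{1/\beta} \bigr)(x),
\]
which is the desired pointwise inequality \eqref{fneprop}.

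I do not anticipate a substantive obstacle: the only points to check carefully are (i) that the kernel representation of $K$ is valid for $G_i \in L^\beta(\Omega)$ (standard Calderón–Zygmund/elliptic regularity) and (ii) that the hypotheses of Theorem \ref{t:mkw} are met, which reduces to joint measurability of $F$ and $\sigma$-finiteness of both measures---both immediate here. The crux is really the structural observation that positivity of the Green's function lets us treat $K$ as a weighted integration against a nonnegative measure, so that Minkowski's inequality for integrals directly yields an inequality between the $\beta$-norm of $(KG_1, KG_2)$ taken pointwise in $x$ and the image under $K$ of the pointwise $\beta$-norm of $(G_1, G_2)$.
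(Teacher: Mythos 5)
Your proposal is correct and takes essentially the same approach as the paper: the paper also applies Theorem \ref{t:mkw} with $Y=\Omega$, $X=\{1,2\}$, $\nu=\delta_1+\delta_2$, and $F(i,y)=G_i(y)$, the only cosmetic difference being that the paper absorbs the Green's function into the measure, $d\mu(y)=G(x,y)\,dy$, while you keep Lebesgue measure and pull the nonnegative kernel out of the inner $\ell^\beta$-norm by homogeneity.
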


\begin{proof}
This is particular case of Theorem \ref{t:mkw}. Indeed, let $G$ be the Green's function of $-\Delta$, that is, 
$K f = G \ast f$.   Then \eqref{fneprop} follows from Theorem \ref{t:mkw} with $p = \beta$, $d\mu(y) := G(x,y) \, dy$, $\nu = \delta_1 + \delta_2$, $X = \{1,2\}$, $Y = \Omega$ and
$F(i, y) = G_i(y)$.
\end{proof}

\begin{lemma}\label{lemma:monotonicitypower}
Let $m >0$, $m \neq 1$ and $t \in (0,1)$. Then the function
\begin{equation}\label{afc}
q \mapsto  ((1 - t)m^q + t)^{1/q}, \ \ q \in(0, \infty) 
\end{equation}
is increasing.
\end{lemma}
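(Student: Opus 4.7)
The function $q\mapsto ((1-t)m^q + t)^{1/q}$ is precisely the weighted power mean of $m$ and $1$ with weights $1-t$ and $t$, so the statement is the classical monotonicity of power means. The plan is to prove it directly by reducing to Jensen's inequality for the strictly convex power function $s\mapsto s^{r/q}$.

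Concretely, I would fix $0<q<r$ and aim to show
\begin{equation}
((1-t)m^q + t)^{1/q} < ((1-t)m^r + t)^{1/r}.
\end{equation}
Raising both sides to the $r$th power (which preserves the inequality, since both sides are positive), this is equivalent to
\begin{equation}
((1-t)m^q + t)^{r/q} < (1-t)m^r + t.
\end{equation}
Since $r/q>1$, the function $\phi(s):=s^{r/q}$ is strictly convex on $[0,\infty)$. Applying the two-point Jensen inequality with the values $s_1=m^q$, $s_2=1$ and weights $1-t$, $t$ yields
\begin{equation}
\phi((1-t)m^q + t\cdot 1) \leq (1-t)\phi(m^q) + t\phi(1) = (1-t)m^r + t,
\end{equation}
with strict inequality because $m^q \neq 1$ (this uses $m\neq 1$ and $t\in(0,1)$, so the two points are distinct and both weights are positive). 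This is exactly the required inequality, and the strict monotonicity of $q\mapsto ((1-t)m^q + t)^{1/q}$ follows.

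There is really no obstacle here; the only thing to watch is the direction of the Jensen inequality (convex, not concave) and to handle $m>1$ and $m<1$ uniformly, which the Jensen argument does automatically since it never uses the sign of $\log m$. One could alternatively differentiate $L(q):=q^{-1}\log((1-t)m^q + t)$ and show $L'(q)>0$, but this requires proving the scalar inequality $qG'(q)>G(q)$ with $G(q)=\log((1-t)m^q+t)$, which in the end reduces to the same convexity argument; the Jensen route above is more transparent and avoids any case analysis in $m$.
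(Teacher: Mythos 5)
Your proof is correct, and it takes a genuinely different (though closely related) route from the paper. The paper proves monotonicity by calculus: it computes the derivative of $q \mapsto ((1-t)m^q+t)^{1/q}$, observes that its sign is governed by the expression
\[
\frac{(1-t)m^q \ln m}{(1-t)m^q + t} - \frac{\ln((1-t)m^q + t)}{q},
\]
and shows this is positive by rewriting the condition as
\[
\bigl((1-t)m^q + t\bigr)\ln\bigl((1-t)m^q + t\bigr) < (1-t)\, m^q \ln m^q + t \cdot 1 \ln 1,
\]
which is precisely the two-point strict Jensen inequality for the strictly convex function $x \mapsto x\ln x$ at the points $m^q$ and $1$. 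You instead compare the values at two exponents $q < r$ directly, raising to the $r$th power and applying strict Jensen for $s \mapsto s^{r/q}$; this is the classical proof of power-mean monotonicity and never touches derivatives. The alternative you dismiss in your closing paragraph --- differentiating $L(q) = q^{-1}\log((1-t)m^q+t)$ and reducing the sign of $L'$ to a scalar convexity inequality --- is in fact exactly what the paper does, with $x\ln x$ as the convex function; your observation that it ``reduces to the same convexity argument'' is accurate, since the paper's inequality is essentially the infinitesimal version of yours. Your route is arguably cleaner for the purpose at hand: the application in the paper (Proposition~\ref{prop:pathsystem}) only needs the strict comparison of the values at two exponents, which your argument delivers directly, while the paper's computation additionally produces an explicit derivative formula that is never used. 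Both arguments handle $m>1$ and $m<1$ uniformly, and your strictness bookkeeping ($m^q \neq 1$ because $m \neq 1$ and $q>0$, with both weights $1-t$, $t$ positive) is complete.
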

\begin{proof}
Since the derivative of the function defined in \eqref{afc} is 
\begin{equation}
\frac{((1 - t)m^q + t)^{1/q}}{q}  \left(\frac{(1- t)m^q \ln m}{ (1 - t)m^q+ t} - \frac{\ln ((1- t) m^q+ t)}{q} \right) \,,
\end{equation}
it suffices to prove the positivity of the term in parentheses. This is equivalent to 
\[
((1-t)m^q+t) \ln((1-t)m^q+t)<(1-t) m^q \ln m^q +t \ln 1
\]
and the monotonicity follows from the strict convexity of the function $x \in (0,\infty)\mapsto x \ln x$. 
\end{proof}

\begin{proposition}\label{prop:pathsystem}
 Let $f_i \in L^{\frac{p+1}{p}}(\Omega)$, $g_i \in L^{\frac{q+1}{q}}(\Omega)$, $i=1,2$, be positive functions and assume \eqref{eq:hipothesissublinear}. 
If $(f_1, g_1)\neq (f_2, g_2)$, then for each $x\in \Omega$ 
\begin{equation}\label{eq:NewConvSys}
t\mapsto \left((1-t) f_1^{\frac{p+1}{p}} + t f_2^{\frac{p+1}{p}} \right)^{\frac{p}{p+1}} K \left(  \left((1-t) g_1^{\frac{q+1}{q}} + t g_2^{\frac{q+1}{q}} \right)^{\frac{q}{q+1}}\right) \qquad\text{is strictly concave on $[0,1]$.}
\end{equation}
\end{proposition}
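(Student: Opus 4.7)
The plan is to identify the two factors in the product as $\alpha$-concave and $\beta$-concave functions of $t$ with $\alpha := (p+1)/p$ and $\beta := (q+1)/q$, and then invoke a three-exponent Hölder inequality whose applicability is governed precisely by the assumption $pq<1$, since an elementary rearrangement shows $pq<1 \Leftrightarrow 1/\alpha+1/\beta<1$. Fix $x \in \Omega$ and write
\[
F(t) := \bigl((1-t)f_1(x)^\alpha + t f_2(x)^\alpha\bigr)^{1/\alpha}, \qquad G(t) := K\bigl(H(t,\cdot)\bigr)(x),
\]
where $H(t,y) := \bigl((1-t) g_1(y)^\beta + tg_2(y)^\beta\bigr)^{1/\beta}$; the goal is to prove that $t\mapsto F(t)G(t)$ is strictly concave on $[0,1]$.

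The $\alpha$-concavity of $F$ is trivial, as $F^\alpha$ is affine in $t$. For the $\beta$-concavity of $G$, fix $0\le t_1<t_2\le 1$, $s\in(0,1)$, and set $\tau := (1-s)t_1+st_2$. Direct substitution yields $H(\tau,y)^\beta = (1-s)H(t_1,y)^\beta + s H(t_2,y)^\beta$. I would then apply Proposition \ref{prop:convexityoperator} with exponent $\beta$ to the nonnegative functions $\widetilde G_1 := (1-s)^{1/\beta}H(t_1,\cdot)$ and $\widetilde G_2 := s^{1/\beta}H(t_2,\cdot)$ (so that $\widetilde G_1^\beta + \widetilde G_2^\beta = H(\tau,\cdot)^\beta$), and use the linearity of $K$ to obtain
\[
(1-s)G(t_1)^\beta + sG(t_2)^\beta = (K\widetilde G_1)^\beta + (K\widetilde G_2)^\beta \leq \bigl[K H(\tau,\cdot)\bigr]^\beta = G(\tau)^\beta.
\]

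To deduce concavity of $FG$, use the three-factor Hölder inequality with exponents $(\alpha,\beta,r)$ where $1/r := 1 - 1/\alpha - 1/\beta > 0$, applied to the length-two vectors $\bigl((1-s)^{1/\alpha}F(t_1), s^{1/\alpha}F(t_2)\bigr)$, $\bigl((1-s)^{1/\beta}G(t_1), s^{1/\beta}G(t_2)\bigr)$, and $\bigl((1-s)^{1/r}, s^{1/r}\bigr)$. Since the exponents of $(1-s)$ and of $s$ in the componentwise product combine to $1/\alpha+1/\beta+1/r = 1$, Hölder yields
\[
(1-s)F(t_1)G(t_1) + sF(t_2)G(t_2) \leq \bigl((1-s)F(t_1)^\alpha + sF(t_2)^\alpha\bigr)^{1/\alpha}\bigl((1-s)G(t_1)^\beta + sG(t_2)^\beta\bigr)^{1/\beta},
\]
which, chained with the monotone functions $z\mapsto z^{1/\alpha}$ and $z\mapsto z^{1/\beta}$ applied to the concavity inequalities for $F^\alpha$ and $G^\beta$, gives $F(\tau)G(\tau) \geq (1-s)F(t_1)G(t_1) + sF(t_2)G(t_2)$.

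The main obstacle is upgrading this to strict concavity. Hölder is strict unless the three length-two vectors above are pairwise proportional, which (given $s\in(0,1)$) forces $F(t_1) = F(t_2)$ and $G(t_1)=G(t_2)$. Since $t\mapsto F(t)^\alpha$ is strictly monotone whenever $f_1(x)\ne f_2(x)$, the Hölder step is strict in this case. In the complementary case $f_1(x) = f_2(x)$, the factor $F$ is constant in $t$ and strict concavity of $FG$ reduces to strict concavity of $G$ at the point $x$; this is obtained by tracking strictness through the Minkowski step in Proposition \ref{prop:convexityoperator}, where strict inequality holds unless $H(t_1,\cdot)$ and $H(t_2,\cdot)$ are proportional as functions on $\Omega$, a degenerate configuration ruled out by the standing positivity of the $f_i$, $g_i$, the strict positivity of the Green kernel of $K$, and the hypothesis $(f_1,g_1)\neq (f_2,g_2)$.
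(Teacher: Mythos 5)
Your concavity argument is sound and mechanically different from the paper's: the paper first reduces, via the reparametrization trick \eqref{eq:globalargument}, to the endpoint inequality \eqref{eq:systemineq}, then lowers the exponent $\frac{q+1}{q}$ to the conjugate exponent of $\alpha$ using the strict monotonicity of $r\mapsto ((1-t)a^r+tb^r)^{1/r}$ (Lemma \ref{lemma:monotonicitypower}) together with the strict monotonicity of $K$, and only then applies the two-exponent H\"older inequality and Proposition \ref{prop:convexityoperator}. Your three-exponent H\"older with the slack exponent $r$ absorbs the deficit $1/\alpha+1/\beta<1$ in one stroke and dispenses with both the endpoint reduction and Lemma \ref{lemma:monotonicitypower}; the identification of $G^\beta$ as concave via Proposition \ref{prop:convexityoperator} is correct, and so is your equality analysis of the H\"older step (equality forces $F(t_1)=F(t_2)$ and $G(t_1)=G(t_2)$), which settles strictness at every $x$ with $f_1(x)\neq f_2(x)$.

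The gap is in your last sentence. The proportional configuration $H(t_2,\cdot)=c\,H(t_1,\cdot)$ is \emph{not} ruled out by positivity, the positivity of the Green kernel, and $(f_1,g_1)\neq(f_2,g_2)$: since $H(t,\cdot)^\beta$ is affine in $t$, proportionality for a single pair $t_1\neq t_2$ is equivalent to $g_2=\kappa g_1$ for some constant $\kappa>0$ (raise to the $\beta$-th power and solve; the case where both coefficients vanish forces $t_1=t_2$). For instance $g_2=2g_1$ with $f_1=f_2$ satisfies all your standing hypotheses, and there the Minkowski step is an identity, so your proof of strict concavity of $G$ collapses as written. The repair is short: in that configuration $G(t)=\big((1-t)+t\kappa^\beta\big)^{1/\beta}Kg_1(x)$, so $G^\beta$ is affine and, when $\kappa\neq 1$, non-constant, whence $G$ is strictly concave because $z\mapsto z^{1/\beta}$ is strictly concave --- the strictness comes from the outer power, not from Minkowski. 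The residual case $\kappa=1$, i.e.\ $g_1=g_2$ a.e., cannot be repaired: if moreover $f_1(x)=f_2(x)$ at the given point, then $t\mapsto F(t)G(t)$ is constant and the asserted pointwise strict concavity genuinely fails there. This edge case is in fact also glossed over by the paper's own proof, whose strictness likewise enters only through $g_1\neq g_2$; it is harmless in the application to Theorem \ref{th:hamiltoniansystem}, where $g_1=g_2$ forces $u_i=Kg_i$, and hence $f_i=\phi_p(u_i)$, to coincide, but it cannot be dismissed from the hypotheses of the proposition as you claim.
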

\begin{proof}
Arguing as in \eqref{eq:globalargument} of the proof of Lemma \ref{lem:gin}, it is enough to prove the pointwise inequality
\begin{multline}\label{eq:systemineq}
\left((1-t) f_1^{\frac{p+1}{p}} + t f_2^{\frac{p+1}{p}} \right)^{\frac{p}{p+1}} K \left(  \left((1-t) g_1^{\frac{q+1}{q}} + t g_2^{\frac{q+1}{q}} \right)^{\frac{q}{q+1}}\right) \\>  (1 - t) f_1K g_1 + t f_2Kg_2 \,,  \quad \textrm{ for all }  t \in (0,1) , \ \ \textrm{ for all }  x \in \Omega.
\end{multline}

First observe that the condition 
$p\cdot q < 1$  is equivalent to $\frac{p}{p+1} + \frac{q}{q+1} < 1$.
Set $\alpha = \frac{p + 1}{p}$ and let $\beta >0$ be such that
\begin{equation}
\frac{1}{\alpha} + \frac{1}{\beta} = 1.
\end{equation}
Then observe that $\beta < \frac{q+1}{q}$. Applying \eqref{afc} with $m = \frac{a}{b}$, for any $a>0$, $b>0$ with $a\neq b$ and $t \in (0,1)$, 
the function $r \in (0, \infty)\mapsto ((1- t)a^r + t b^r)^{\frac{1}{r}}$
is strictly increasing. Since $K= (- \Delta)^{-1}$ is a strictly monotone operator, it is enough to prove \eqref{eq:systemineq} as non-strict inequality with 
 $\frac{q+1}{q}$ replaced by $\beta$.

After the substitution 
\begin{equation}
F_1 := (1 - t)^{\frac{1}{\alpha}} f_1 , \quad 
G_1 := (1 - t)^{\frac{1}{\beta}} g_1, \quad
F_2 :=t^{\frac{1}{\alpha}} f_2, \quad 
G_2 := t^{\frac{1}{\beta}} g_2,
\end{equation}
we obtain that \eqref{eq:systemineq} (with non-strict inequality)
 is equivalent to 
\begin{equation}\label{eq:systemineqequiv}
(F_1^\alpha + F_2^\alpha)^{\frac{1}{\alpha}} K 
(G_1^\beta + G_2^\beta)^{\frac{1}{\beta}} \geq F_1 K G_1 + F_2 K G_2 
\qquad  t \in (0, 1)\, .
\end{equation}
By H\"older inequality and  Proposition \ref{prop:convexityoperator}
\begin{equation}
F_1 K G_1 + F_2 KG_2
\leq (F_1^\alpha + F_2^\alpha)^{\frac{1}{\alpha}} 
((KG_1)^{\beta} + (KG_2)^\beta )^{\frac{1}{\beta}} \leq 
(F_1^\alpha + F_2^\alpha)^{\frac{1}{\alpha}} K 
(G_1^\beta + G_2^\beta)^{\frac{1}{\beta}}  \,,
\end{equation}
as desired.
\end{proof}

\begin{remark}
If $p >0, \, q> 0$ and $p\cdot q =1$, then \eqref{eq:systemineq} holds true as 
non-strict inequality.
\end{remark}

\begin{remark}
By setting $f_1 = \varepsilon f_2$, $g_1 = \varepsilon g_2$ in \eqref{eq:systemineq} we have (after division by $f_1 K g_1 > 0$)
\begin{equation}
 \left( (1 - t)\varepsilon^{\alpha} + t \right)^{\frac{1}{\alpha}}  \left( (1 - t)\varepsilon^{\beta} + t \right)^{\frac{1}{\beta}}
>  \Big( (1 - t) \varepsilon^{\frac{1}{\alpha} + \frac{1}{\beta}} + t \Big) \,.
\end{equation}
Clearly the opposite (strict) inequality holds true if $\frac{1}{\alpha} + \frac{1}{\beta} > 1$, and $\varepsilon, t > 0$ are sufficiently small. 
Thus $p\cdot q \leq 1$ is an optimal assumption. 
\end{remark}

\begin{proof}[Proof of Theorem \ref{th:hamiltoniansystem}]
Let $A$ be the set of positive solutions of \eqref{eq:more general system} and $\Phi$ be defined by \eqref{phi}. 

\smallbreak 
\noindent \emph{Step 1.} The set $A$ contains at most one element.

 It is enough to show that conditions a)-c) of Theorem \ref{th:abstracttheorem} are satisfied. Given $(f_i, g_i)$ with $f_i, g_i >0$, for $i=1,2$, and such that $(f_1, g_1) \neq (f_2, g_2)$, consider the path
\begin{equation}\label{toworkrpoof}
\gamma(t) = \left( \left((1-t) f_1^{\frac{p+1}{p}} + t f_2^{\frac{p+1}{p}} \right)^{\frac{p}{p+1}},  \left((1-t) g_1^{\frac{q+1}{q}} + t g_2^{\frac{q+1}{q}} \right)^{\frac{q}{q+1}}\right),  \ \ t \in[0,1],
\end{equation}
which connects $(f_1, g_1)$ to $(f_2, g_2)$, i.e. condition a) from Theorem \ref{th:abstracttheorem}. In addition, from \eqref{eq:NewConvSys} we infer that
\begin{equation}\label{eq:pathsystemineqagain}
t \mapsto \Phi(\gamma(t)) \quad \text{is strictly convex on $[0,1]$}
\end{equation}
and condition c) of Theorem \ref{th:abstracttheorem} follows.
On the other hand, any positive solution $(f, g)$ of \eqref{euler-lagrange} 
corresponds to a positive solution
$(u,v) :=( \phi_ p^{-1}(f), \phi_ q^{-1}(g)) \in C^2(\overline{\Omega})\times C^2(\overline{\Omega})$ of \eqref{eq:more general system} and, by the Hopf Lemma, 
we infer that $\frac{\partial u}{\partial \nu} < 0$ and $\frac{\partial v}{\partial \nu} < 0$ on $\partial \Omega$. 
Then by Lemma \ref{lemma:gradientboundarycomparison} we have $\delta^{-1} u_1 \leq u_2 \leq \delta u_1$ and 
 $\delta^{-1} v_1 \leq v_2 \leq \delta v_1$ for any  positive solutions $(u_1, v_1)$ and $(u_2, v_2)$ of \eqref{eq:more general system}.
 Consequently $\delta^{-1} f_1 \leq f_2 \leq \delta f_1$ and $\delta^{-1} g_1 \leq g_2 \leq \delta g_1$ and Corollary \ref{corollary_of_lemma:gpg} yields condition b) of Theorem \ref{th:abstracttheorem}. Therefore, $A$ is empty or a singleton.

\smallbreak

\noindent \emph{Step 2.} Existence of a positive solution (global minimizer of $\Phi$). 

Let $(f,g) \in L^{\frac{p+1}{p}}(\Omega)\times L^{\frac{q+1}{q}}(\Omega)$ and let $\alpha, \beta$ be as in the proof of Proposition 
\ref{prop:pathsystem}. Then, since $\|Kg\|_{L^s} \leq C \|g\|_{L^s}$ for any $s > 0$ 
we infer from the H\"older and Young inequalities (note that $\beta < \frac{q + 1}{q}$) that for any $\epsilon > 0$
\begin{align*}
\int_{\Omega} f K g \, dx &\leq 
C \|f\|_{L^{\alpha}}  \|g\|_{L^\beta} \leq \epsilon (\|f\|_{L^{\frac{p+1}{p}}}^{\frac{p+1}{p}}  + 
\|g\|_{L^{\frac{q+1}{q}}}^{\frac{q+1}{q}}) + C(\epsilon) \,,
\end{align*}
and consequently $\Phi$ is coercive, and in particular bounded from below. Since $Kg\in W^{2,\frac{q+1}{q}}(\Omega)\cap W^{1,\frac{q+1}{q}}_0(\Omega)$ and we have the compact embedding $W^{2, \frac{q+1}{q}}(\Omega) \subset \subset L^{\frac{q+1}{q}}(\Omega) \subset L^{p+1}(\Omega)$, we infer that $\Phi$ is lower semi-continuous and thus attains its 
global minimum, which is negative (indeed, taking $\varphi$ a $C^\infty(\overline \Omega)$ positive function, $\Phi(\varepsilon \varphi,(\varepsilon \varphi)^\frac{q(p+1)}{p(q+1)})<0$ for small $\varepsilon>0$) at a point $(f_0,g_0)$ with $f_0,g_0>0$ in $\Omega$. The latter statement follows from the maximum principle. 
\end{proof}

\section{Nonlinear eigenvalue problems from Mathematical Physics}\label{sec:RN}

In this section we apply our main results to elliptic equations and systems appearing in Mathematical Physics. More precisely, we approach eigenvalue type-problems, that is, problems which can be written in the form
\[
L_1u_1=\omega_1 u_1,\quad \ldots, \quad L_k u_k=\omega_k u_k \qquad (k\geq 1),
\]
where $L_i$ are nonlinear differential operators. There are two different kind of natural questions: firstly, given $\omega_1,\ldots, \omega_k$, does there exist 
a unique positive solution to the problem? Secondly, does there exist a unique positive eigenvalue satisfying the constraint $\int u_i^2=1$? These questions are, in general, not equivalent: in the second framework, the eigenvalues are not fixed a priori, and appear as Lagrange multipliers. We will deal with these issues in the case of a quasilinear Schr\"odinger equation (Section \ref{subsec:quasilinear}) and for a Gross-Pitaevskii--type system (Section \ref{subsecGP}), both for $\Omega$ bounded and $\Omega={\mathbb R}^N$.

\subsection{Defocusing Schr\"odinger equation}\label{subsec:quasilinear}

Consider the equation
\begin{equation}
  \label{eq:gs1}
  -\Delta u-u\Delta u^2+ V(x) u + u^3 =\omega u\qquad  \text{in $\Omega$},
\end{equation}
where $V(x)$ is the trapping potential $|x|^2$ in case $\Omega={\mathbb R}^N$, or $V\in L^\infty(\Omega)$ and $u=0$ on the boundary if $\Omega$ is a bounded regular domain, and $N\geq 1$. This equation arises when looking for standing waves $\phi(t,x)=e^{\imath \omega t}u(x)$ of the
quasilinear defocusing Schr\"odinger equation 
\begin{equation}
   \label{eq:schr1}
 i \partial_t\phi-\Delta\phi-\phi\Delta |\phi|^2 + V \phi +  \phi^3=0\quad  \text{in $(0,\infty)\times \Omega$},
 \end{equation}
which serves as a model in many physical situations, for which we refer to \cite{ColinJeanjeanSquassina, PoppSchmittWang}.

The first results for \eqref{eq:gs1} appeared in \cite{LiuWangI, PoppSchmittWang}, where for $\Omega={\mathbb R}^N$ and $\omega=0$, a constrained minimization is used
to prove existence of solutions. 
Difficulties arise due to the presence of the term $u\Delta u^2$, and are related to the existence of three different scales in the equation. In the variational 
setting the term $\int u^2 |\nabla u|^2\, dx$ is not well defined in $H^1({\mathbb R}^N)$, while the natural set $\{u\in H^1({\mathbb R}^N); \int u^2 |\nabla u|^2\, dx<\infty\ \}$ is not a vector space. To treat $u\Delta u^2$ a substitution $u=f(v)$ (cf. \eqref{odef}) was introduced in \cite{ColinJeanjean, LiuWangWangII}, 
which reduced \eqref{eq:gs1} to a semilinear equation, with a more complicated nonlinear zero order term. This reduction, that has also been used for example in \cite{AdachiWatanabe, ColinJeanjeanSquassina}, allows one to work in the $H^1$ setting. 

We will use the dual method as in \cite{ColinJeanjean}. Let $f$ be the odd function such that
\begin{equation}\label{odef}
f'(t)=\frac{1}{\sqrt{1+2f^2(t)}} \quad \text{in } (0,\infty), \quad f(0)=0.
\end{equation}
If $u$ is a solution to \eqref{eq:gs1}, by \cite[\S 2]{ColinJeanjean}, then $v=f^{-1}(u)$ is a solution of
\begin{equation}\label{eq:dual_formulation_quasilinear}
-\Delta v + (V(x)-\omega)\frac{f(v)}{\sqrt{1+2f^2(v)}}+ \frac{(f(v))^3}{\sqrt{1+2f^2(v)}}=0.
\end{equation}
We recall from \cite[Lemma 2.1]{ColinJeanjean} that
\begin{equation}\label{eq:study_of_f}
\frac{f(t)}{t}\to 1 \quad \text{as } t\to 0, \qquad \frac{f(t)}{\sqrt{t}} \to 2^{1/4} \quad \text{ as } t\to +\infty,
\end{equation}
and from \cite[Lemma 2.2]{ColinJeanjean} that
\begin{equation}\label{eq:lemma2.2CJ}
\frac{1}{2} f(t) \leq \frac{t}{\sqrt{1 + 2 f^2(t)}} \leq f(t) \qquad 
\textrm{ for all } t \in {\mathbb R}.
\end{equation}
For every fixed $\omega \in {\mathbb R}$, the solutions of \eqref{eq:dual_formulation_quasilinear} are critical points of the $C^1$-action functional
$\mathcal{A}_\omega : H \to \mathbb{R}$ with
\[
\mathcal{A}_\omega(v) =\frac{1}{2}\int_\Omega |\nabla v|^2\, dx+\frac{1}{2}\int_\Omega (V(x)-\omega) (f(v))^2\, dx +\frac{1}{4}\int_\Omega (f(v))^4\, dx,
\]
where $H = \{ u \in H^1({\mathbb R}^N);  \int_{{\mathbb R}^N}  |u|^2 |x|^2\, dx < \infty\}$ with the norm 
\[
\| u \|^2_H = \int_{\mathbb{R}^N} |\nabla u|^2  + |u|^2 |x|^2\, dx 
\]
 if $\Omega = {\mathbb R}^N$,
and $H = H^1_0(\Omega)$ with the standard norm if $\Omega$ is a bounded regular domain. Observe that $\mathcal{A}_\omega\in C^1(H,\mathbb{R})$ since \eqref{eq:lemma2.2CJ}  implies that $f^2(t)+2f^4(t)\leq 4t^2$.

On the other hand, we can consider the constraint
\[
\mathcal{M} := \left\{v \in H; \ \int_\Omega (f(v))^2\, dx=1\right\}
\]
and the energy $\mathcal{E}:\mathcal{M}\to {\mathbb R}$,
\[
\mathcal{E}(v) := \frac{1}{2}\int_\Omega |\nabla v|^2\, dx+ \frac{1}{2}\int_{\Omega} V(x) (f(v))^2\, dx+ \frac{1}{4}\int_{\Omega} (f(v))^{4}\, dx.
\]
 In this second framework, one looks for critical points of $\mathcal{E}|_{\mathcal{M}}$ and $\omega$ appears as a Lagrange multiplier.

When $\Omega$ is bounded we prove uniqueness of positive critical points both for $\mathcal{A}_\omega$ and $\mathcal{E}|_{\mathcal{M}}$, while for $\Omega={\mathbb R}^N$, we deduce uniqueness of positive critical points of $\mathcal{A}_\omega$. The following statements are, 
to the best of our knowledge, new.

\begin{theorem}\label{thmQLNLS_bounded}
Let $\Omega$ be a bounded regular domain of $ {\mathbb R}^N$, with $N \geq 1$, and assume that $V\in L^\infty(\Omega)$.  Then,
\begin{enumerate}[i)]
\item  for each $\omega\in \mathbb{R}$ fixed, there exists at most one positive solution to \eqref{eq:gs1} with $u=0$ on $\partial \Omega$, that is, $\mathcal{A}_\omega$ has at most one positive critical point, which exists if, and only if, $\omega > \lambda_V$. Here $\lambda_V$ stands for the first eigenvalue of $(-\Delta + V(x)I, H^1_0(\Omega))$.
\item there exists exactly one positive critical point of $\mathcal{E}|_{\mathcal M}$.
\end{enumerate}
\end{theorem}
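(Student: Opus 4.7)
The plan is to apply Theorem \ref{th:abstracttheorem} for part i) and Corollary \ref{cor:abstractcor} for part ii), in both cases using the path $\gamma:[0,1]\to H^1_0(\Omega)$ defined by
\begin{equation}
\gamma(t) := f^{-1}\!\left(\sqrt{(1-t)f^2(v_1)+t f^2(v_2)}\,\right),
\end{equation}
which corresponds to the choice $Q(z)=f^2(z)$ in \eqref{eq:ggm}, so that $f^2(\gamma(t))=(1-t)f^2(v_1)+tf^2(v_2)$ pointwise. Since critical points of $\mathcal{A}_\omega$ (resp.\ of $\mathcal{E}|_{\mathcal M}$) correspond via $u=f(v)$ to solutions of \eqref{eq:gs1}, this reduces the problem to verifying conditions a)--c) of Theorem \ref{th:abstracttheorem} along $\gamma$.

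For condition c), notice that $\int V f^2(\gamma(t))\,dx$ is \emph{linear} in $t$ by construction, while $\int f^4(\gamma(t))\,dx=\int[(1-t)f^2(v_1)+tf^2(v_2)]^2\,dx$ is strictly convex whenever $v_1\neq v_2$. The delicate term is the gradient one, for which I will apply Lemma~\ref{lem:gin} with $Q(z)=f^2(z)$ and $M(z)=z^2$. A direct calculation, using $f'(z)=1/\sqrt{1+2f^2(z)}$, gives $F_1(z_1)=Q'\circ Q^{-1}(z_1)=2\sqrt{z_1}/\sqrt{1+2z_1}$ and $F_2(z_2)=\sqrt{z_2}$, both strictly concave; then the sufficient criterion of Remark \ref{remark:concavity} becomes
\begin{equation}
\left(\left(\frac{F_1}{F_1'}\right)'-1\right)\left(\left(\frac{F_2}{F_2'}\right)'-1\right)=(1+8z_1)\cdot 1>1,
\end{equation}
yielding strict concavity of $(z_1,z_2)\mapsto F_1(z_1)F_2(z_2)$ and hence strict convexity of $t\mapsto\int|\nabla\gamma(t)|^2\,dx$. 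Verifying this concavity identity is the main technical ingredient of the proof.

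For condition b), any positive weak solution $v$ of \eqref{eq:dual_formulation_quasilinear} is $C^{1,\alpha}(\overline\Omega)$ by standard elliptic regularity (the coefficients are bounded since $V\in L^\infty$ and $f$ is smooth), and Hopf's Lemma gives $\partial_\nu v<0$ on $\partial\Omega$; Lemma \ref{lemma:gradientboundarycomparison} then yields comparability $\delta^{-1}v_2\leq v_1\leq \delta v_2$. Together with the computations $Q'(z)=2f(z)/\sqrt{1+2f^2(z)}>0$ and $Q''(z)=2/(1+2f^2(z))^2>0$ and the asymptotics $f(t)\sim t$ at $0$, $f(t)\sim 2^{1/4}\sqrt t$ at $\infty$, the ratio hypotheses of Lemma \ref{lemma:gpg} are straightforward to check, so $\gamma$ is locally Lipschitz at $t=0$. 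This proves uniqueness in i), and in ii) the same path stays in $\mathcal{M}$ since $\int f^2(\gamma(t))\,dx=(1-t)+t=1$; since $R(v):=\int f^2(v)\,dx-1$ has $0$ as a regular value, Corollary \ref{cor:abstractcor} applies.

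For existence in i), using $f^2(t)\leq 4t^2$ and $f^4(t)\leq 2t^2$ (from \eqref{eq:lemma2.2CJ}) together with Poincaré, $\mathcal{A}_\omega$ is coercive and weakly lower semicontinuous on $H^1_0(\Omega)$; when $\omega>\lambda_V$, testing with $\varepsilon\phi_1$ and using $f(t)\sim t$ at $0$ gives $\mathcal{A}_\omega(\varepsilon\phi_1)=\tfrac{\varepsilon^2}{2}(\lambda_V-\omega)\int\phi_1^2+O(\varepsilon^4)<0$ for small $\varepsilon$, so the infimum is attained at a nontrivial minimizer $\bar v$; replacing $\bar v$ by $|\bar v|$ (permitted since $f$ is odd) and applying the strong maximum principle to the semilinear equation satisfied by $\bar v$ yields positivity. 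For non-existence when $\omega\leq\lambda_V$, multiplying \eqref{eq:gs1} by $u>0$ and integrating by parts gives the clean identity
\begin{equation}
\omega\int u^2\,dx=\int(1+4u^2)|\nabla u|^2\,dx+\int V u^2\,dx+\int u^4\,dx>\lambda_V\int u^2\,dx,
\end{equation}
using the variational characterization of $\lambda_V$ and $u\not\equiv 0$, a contradiction. For ii), existence follows by direct minimization of $\mathcal{E}$ on $\mathcal{M}$: coercivity in $H^1_0(\Omega)$ is immediate since the constraint bounds the lower-order terms, weak lower semicontinuity follows from compact embeddings, and positivity of the minimizer is obtained as above. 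The main obstacle is the explicit concavity check for $F_1F_2$ in Step 1; the remaining steps are either direct computations or standard elliptic arguments.
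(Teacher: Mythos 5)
Your proposal follows the paper's own proof essentially step by step: the same path \eqref{eq:boundforgammaNEW} with $Q=f^2$, the same concavity computation $\left(\left(\frac{F_1}{F_1'}\right)'-1\right)\left(\left(\frac{F_2}{F_2'}\right)'-1\right)=(1+8z_1)\cdot 1$ via Lemma \ref{lem:gin} and Remark \ref{remark:concavity} (this is exactly the paper's Lemma \ref{lemma:inequalitygradientNEW}), the same Hopf-plus-Lemma \ref{lemma:gpg} verification of the local Lipschitz condition (the paper's Lemma \ref{lemma:goodpathNEW}, where incidentally your value $Q''=2/(1+2f^2)^2$ is the correct one), and the same existence and non-existence arguments, including the identity $\omega\int u^2=\int(1+4u^2)|\nabla u|^2+\int Vu^2+\int u^4$ behind the paper's Step 2.

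Two local steps need correction, though neither breaks the proof. First, your claim that $t\mapsto\int_\Omega|\nabla\gamma(t)|^2\,dx$ is \emph{strictly} convex is not justified: the strict part of Lemma \ref{lem:gin} requires $|\nabla v|\in M^{-1}((M(0),\Gamma))$, i.e.\ $|\nabla v|>0$ pointwise, which fails at interior critical points of a positive solution (e.g.\ at its maximum), and strictness of the integral would then need an extra positive-measure argument in the spirit of Lemma \ref{lemma:strictconvexity} that you do not give. The paper's Lemma \ref{lemma:inequalitygradientNEW} accordingly asserts only convexity of the gradient term and derives strict convexity of $\mathcal{A}_\omega\circ\gamma$ and $\mathcal{E}\circ\gamma$ from the quartic term $\frac{1}{4}\int\big[(1-t)f^2(v_1)+tf^2(v_2)\big]^2\,dx$, which is strictly convex as soon as $v_1\neq v_2$ --- a fact you also record, so your argument survives once the gradient term is demoted to merely convex (and is \emph{not} ``the main technical ingredient'' for strictness). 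Second, the coercivity of $\mathcal{A}_\omega$ ``via $f^2(t)\leq 4t^2$ and Poincar\'e'' is too quick: Poincar\'e does not let $\frac{1}{2}\int|\nabla v|^2$ absorb $\frac{1}{2}(\|V\|_{\infty}+|\omega|)\int f^2(v)$ when this constant exceeds the Poincar\'e constant; the correct (and the paper's) argument is pointwise, bounding $s\mapsto-\frac{1}{2}(\|V\|_\infty+\omega)s^2+\frac{1}{4}s^4$ below by its minimum with $s=f(v(x))$, so that the quartic term, not the gradient, controls the negative part.
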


\begin{theorem}\label{thmQLNLS_unbounded}
Let $\Omega={\mathbb R}^N$, with $N\geq 1$, and take $V(x)=|x|^2$. 
For each $\omega\in \mathbb{R}$ fixed, there exists at most one positive solution of \eqref{eq:gs1}, that is, at most one positive critical point of $\mathcal{A}_\omega$, which exists if, and only if, $\omega > \lambda_V$. Here $\lambda_V$ stands for the first eigenvalue of $(-\Delta + V(x)I, H)$.
\end{theorem}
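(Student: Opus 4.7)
The plan is to apply Theorem~\ref{th:abstracttheorem} on the dual side, where positive critical points of $\mathcal{A}_\omega$ on $H$ correspond bijectively to positive solutions of \eqref{eq:gs1} via $u=f(v)$. Given two positive critical points $v_1,v_2\in H$ of $\mathcal{A}_\omega$, I take
\[
\gamma(t):=f^{-1}\!\Big(\sqrt{(1-t)f^2(v_1)+t\, f^2(v_2)}\,\Big),\qquad t\in[0,1],
\]
which corresponds to the choice $Q(z)=f^2(z)$ in the notation of Section~\ref{sec:aux-res}, as flagged in the remark after Lemma~\ref{lem:gin}. Condition a) of Theorem~\ref{th:abstracttheorem} is immediate, and $\gamma(t)\in H$ follows from the monotonicity of $Q$ and the bound $\gamma(t)\leq\max(v_1,v_2)$.

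For condition c), the crucial observation is that $f^2(\gamma(t))=(1-t)f^2(v_1)+t\,f^2(v_2)$ is affine in $t$. Hence $t\mapsto\int(|x|^2-\omega)f^2(\gamma(t))\,dx$ is affine, $t\mapsto\int f^4(\gamma(t))\,dx$ is convex (strictly so unless $v_1\equiv v_2$, being the integral of a squared nontrivial affine function), and the Dirichlet term is handled by Lemma~\ref{lem:gin} with $M(z)=z^2$ and $Q=f^2$: a direct computation gives $F_1(z_1)=2z_1^{1/2}(1+2z_1)^{-1/2}$, $F_2(z_2)=z_2^{1/2}$, and in the criterion of Remark~\ref{remark:concavity}
\[
F_1F_1''\,F_2F_2''-(F_1'F_2')^2=\frac{2}{z_2(1+2z_1)^3}>0,
\]
so $F_1F_2$ is strictly concave on $(0,\infty)^2$; Lemma~\ref{lem:gin} then yields the strict convexity of $t\mapsto\int|\nabla\gamma(t)|^2\,dx$ when $v_1\neq v_2$, and summing, $t\mapsto\mathcal{A}_\omega(\gamma(t))$ is strictly convex.

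The main obstacle is condition b). On $\mathbb{R}^N$ Hopf's lemma is unavailable, so the comparison $\delta^{-1}v_2\leq v_1\leq\delta v_2$ required to invoke Lemma~\ref{lemma:gpg} (whose hypotheses for $Q(z)=f^2(z)$ are verified using \eqref{odef}--\eqref{eq:lemma2.2CJ}) must come from a sharp asymptotic analysis. The plan is to show that every positive $v\in H$ solving \eqref{eq:dual_formulation_quasilinear} satisfies
\[
v(x)\sim|x|^{(\omega-N)/2}e^{-|x|^2/2}\qquad\text{as }|x|\to\infty.
\]
Since $v\to 0$ at infinity and $f(s)\sim s$ as $s\to 0$ by \eqref{eq:study_of_f}, equation \eqref{eq:dual_formulation_quasilinear} reduces at leading order to the harmonic oscillator $-\Delta v+(|x|^2-\omega)v=0$, whose decaying solutions have the stated behavior; matching sub- and supersolutions of the form $c|x|^\mu e^{-|x|^2/2}$ then pin down the rate. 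Two positive solutions are therefore comparable at infinity, while standard elliptic regularity and the strong maximum principle give comparability on compact sets, producing the global $\delta$. Lemma~\ref{lemma:gpg} then delivers Lipschitz continuity of $\gamma$ at $t=0$ in the $H^1$ part of the norm; the weighted $L^2(|x|^2\,dx)$ piece is controlled by the same pointwise formula \eqref{eq:fpr} applied to $Q=f^2$. Theorem~\ref{th:abstracttheorem} now forces $v_1=v_2$.

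For the existence assertion, testing \eqref{eq:gs1} with $u$ and integrating $-\int u^2\Delta u^2=\int|\nabla u^2|^2=4\int u^2|\nabla u|^2$ yields
\[
\int|\nabla u|^2+4\int u^2|\nabla u|^2+\int V\,u^2+\int u^4=\omega\int u^2,
\]
and the variational characterization of $\lambda_V$ immediately gives the necessity $\omega>\lambda_V$. Conversely, when $\omega>\lambda_V$ the functional $\mathcal{A}_\omega$ is coercive on $H$ (the confining potential and the quartic term dominate the indefinite quadratic piece) and weakly lower semicontinuous, while $\mathcal{A}_\omega(\varepsilon\Phi_1)<0$ for small $\varepsilon>0$, where $\Phi_1>0$ is the first eigenfunction of $(-\Delta+|x|^2,H)$. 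Hence $\mathcal{A}_\omega$ admits a nontrivial minimizer $v_0$; replacing $v_0$ by $|v_0|$ and applying the strong maximum principle to \eqref{eq:dual_formulation_quasilinear} produces a positive critical point, necessarily unique by the first part.
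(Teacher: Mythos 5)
Your uniqueness argument is, in substance, the paper's own Step~1: the same path with $Q=f^2$, local Lipschitz continuity at $t=0$ from Lemma~\ref{lemma:gpg} (with the weighted term $\int_{{\mathbb R}^N}|x|^2 w^2\,dx$ handled via \eqref{eq:fpr}, exactly as in Lemma~\ref{lemma:goodpathNEW}), convexity of the gradient term from Lemma~\ref{lem:gin}, strictness from the quartic term because $f^2(\gamma(t))$ is affine in $t$, and comparability of two positive critical points from the sharp decay. Your Hessian computation $F_1F_1''F_2F_2''-(F_1'F_2')^2=2/\bigl(z_2(1+2z_1)^3\bigr)$ is correct and even slightly sharpens Lemma~\ref{lemma:inequalitygradientNEW}, which only records concavity; the necessity of $\omega>\lambda_V$ by testing with $u$ also matches the paper's Step~2. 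However, two steps are asserted rather than proved, and one of them is the actual crux of the theorem on ${\mathbb R}^N$.

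The serious gap is your existence argument: the claim that $\mathcal{A}_\omega$ is \emph{coercive on $H$} is unjustified and in general false, and this is precisely the difficulty the paper flags (``it is not immediate that minimizing sequences are bounded in $\|\cdot\|_H$''). Since $f(t)\sim 2^{1/4}\sqrt{t}$ as $t\to\infty$ by \eqref{eq:study_of_f}, the confining term controls only $\int|x|^2 f^2(v)\,dx$, which behaves like $\int|x|^2|v|\,dx$ where $v$ is large, whereas $\|v\|_H$ contains $\int|x|^2v^2\,dx$. For $N\geq 3$, tall narrow bumps of height $a\to\infty$ and radius $\rho=a^{-2/(N-2)}$ centered at distance $R$ with $R^2 a\rho^N=O(1)$ have $\mathcal{A}_\omega$ bounded while $\int|x|^2v^2\,dx\sim a\to\infty$; so the direct method does not start from energy bounds alone. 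The paper's repair is essential: pass to nonnegative, Schwarz-symmetric minimizing sequences (using $\mathcal{A}_\omega(v^*)\leq\mathcal{A}_\omega(v)$), deduce from the uniform bound \eqref{eq:boundmany} on $\int f^4(v_n)\,dx$ and radial monotonicity a pointwise bound $v_n\leq C$ for $|x|\geq 1$, and then use $t\leq C_5 f(t)$ on bounded ranges to convert control of $\int|x|^2f^2(v_n)\,dx$ into control of $\int|x|^2v_n^2\,dx$. A second, lesser gap is the decay estimate: for hypothesis b) you need the \emph{exact} two-sided rate $v\sim|x|^{(\omega-N)/2}e^{-|x|^2/2}$ of Theorem~\ref{prop:sharpdecay_qlS} --- any loss $|x|^{\pm\varepsilon}$ in your barriers makes $v_1/v_2$ unbounded and destroys the comparison needed for Lemma~\ref{lemma:gpg}. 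Your one-line matching argument omits the bootstrap that makes this rigorous: first $v\in L^\infty$ and $v\to 0$ (Moser iteration), then a crude bound $v\leq Ce^{-|x|^2/4}$ so that the cubic term and the deviation of $f(v)/\sqrt{1+2f^2(v)}$ from $v$ become exponentially small perturbations, and only then the sharp comparison (the paper runs this through Lemma~\ref{lemma:ExponentialDecay} with potentials $|x|^2-\omega\pm Ce^{-|x|^2/4}$ and an explicit integration of $\int\sqrt{s^2-\omega}\,ds$). Your plan is repairable along exactly these lines, but as written neither step is a proof.
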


In order to prove Theorem \ref{thmQLNLS_unbounded}, we will need to deduce the sharp decay at infinity for all positive solutions of \eqref{eq:dual_formulation_quasilinear}. Since $v=f^{-1}(u)$ and $f(t)\sim t$ as $t\to 0$, this also gives the sharp decay result for positive solutions of our original problem \eqref{eq:gs1}. Since we believe this to be of independent interest, we state these results as a theorem.
\begin{theorem}\label{prop:sharpdecay_qlS}
 Let $v\in H$ be any positive solution of problem
\begin{equation}\label{eq:aeq}
-\Delta v + (|x|^2-\omega)\frac{f(v)}{\sqrt{1+2f^2(v)}}+ \frac{(f(v))^3}{\sqrt{1+2f^2(v)}}=0 \qquad \text{ in } {\mathbb R}^N.
\end{equation}
Then
\begin{equation}\label{eq:shd}
v(x)\sim |x|^\frac{\omega-N}{2}e^{-\frac{|x|^2}{2}} \qquad \text{ as } |x|\to \infty,
\end{equation}
that is, there exist $C_1,C_2>0$ such that
\[
C_1 |x|^\frac{\omega-N}{2}e^{-\frac{|x|^2}{2}} \leq v(x) \leq C_2 |x|^\frac{\omega-N}{2}e^{-\frac{|x|^2}{2}} \qquad \text{ for large } |x|.
\]
Consequently, if $u$ is a positive solution of 
\[
-\Delta u-u\Delta u^2+ |x|^2 u + u^3 =\omega u\qquad  \text{in ${\mathbb R}^N$},
\]
then
\[
u(x)\sim |x|^\frac{\omega-N}{2}e^{-\frac{|x|^2}{2}} \qquad \text{ as } |x|\to \infty.
\]
\end{theorem}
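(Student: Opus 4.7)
My plan is to view \eqref{eq:aeq} as a perturbation of the linear Hermite-type equation $Lv := -\Delta v + (|x|^2-\omega)v = 0$ and compare $v$ with explicit barriers matching the decay of the distinguished solution of $L$. First, since $v \in H$, standard elliptic regularity gives $v \in C^2({\mathbb R}^N)$, and combining the $L^2$-integrability of $v|x|$ with interior $L^\infty$-bounds on unit balls centered at $x$ yields $v(x) \to 0$ as $|x|\to\infty$. From the Taylor expansions $f(v)=v-v^3/3+O(v^5)$ and $\sqrt{1+2f^2(v)}=1+v^2+O(v^4)$, the equation \eqref{eq:aeq} can be recast as
\[
Lv = R(x,v), \qquad |R(x,v)| \leq C\bigl(|x|^2+1\bigr) v^3,
\]
so that $R$ is a lower-order perturbation once $v$ is known to decay.

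A direct computation for the ansatz $w_\beta(x):=|x|^\beta e^{-|x|^2/2}$ gives
\[
\frac{Lw_\beta}{w_\beta} = (2\beta+N-\omega) - \frac{\beta(\beta+N-2)}{|x|^2},
\]
so the critical exponent is $\alpha:=(\omega-N)/2$, for which the leading bracket vanishes. An Agmon-type supersolution $\phi_\varepsilon(x)=C_\varepsilon e^{-(1-\varepsilon)|x|^2/2}$ (verified directly from $Lw\geq 0$ outside a large ball) provides a first crude exponential bound, in particular ensuring that $R(x,v) = o(v)$ pointwise for $|x|$ large since $|x|^2 v^2 \to 0$ super-exponentially.

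For the sharp upper bound, fix $\delta>0$ small and set $\phi^+_A = A\, w_{\alpha+\delta}$. From the formula above, $L\phi^+_A \geq \tfrac{3}{2}\delta\,\phi^+_A$ on $\{|x|\geq R_1(\delta)\}$, while from the crude decay $|R(x,v)|\leq \delta v$ on $\{|x|\geq R_2\}$. Choosing $A$ large enough so that $\phi^+_A \geq v$ on the sphere $\{|x|=R_0\}:=\{|x|=\max(R_1,R_2)\}$ and comparing $\phi^+_A - v$ (a super-solution of the coercive linear operator $L - R(x,v)/v$) on the exterior domain gives $v\leq A|x|^{\alpha+\delta}e^{-|x|^2/2}$. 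To remove $\delta$, I would refine the barrier to $\phi^+(x)=A w_\alpha(x) + B w_{\alpha-2}(x)$, choosing $B$ so that the $|x|^{\alpha-2}$ contribution to $L(Aw_\alpha)$ is absorbed and the next correction becomes of lower order; this argument, iterated if necessary, yields $v(x)\leq C|x|^\alpha e^{-|x|^2/2}$. The symmetric argument with $w_{\alpha-\delta}$ as sub-solution, together with a Harnack lower bound giving $v(x_0)\geq c$ at some point $|x_0|=R_0$, produces the matching lower bound. The assertion on $u = f(v)$ follows from $f(t)/t\to 1$ as $t\to 0$ by \eqref{eq:study_of_f}.

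The main obstacle is the final removal of the auxiliary $\delta$: the naive barriers $w_{\alpha\pm\delta}$ only yield the decay up to arbitrary polynomial loss, and extracting the exact power $\alpha$ requires the two-term ansatz above (or equivalently an asymptotic expansion of the ODE for radial Hermite solutions) together with a careful verification that the perturbation $R(x,v)$, which carries a troublesome factor $|x|^2$, is still dominated by the leading $|x|^{\alpha-2}w$-scale of $Lw_\alpha$ once the preliminary estimate $v\leq C|x|^{\alpha+\delta}e^{-|x|^2/2}$ is inserted.
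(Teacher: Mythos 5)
Your argument is correct and reaches the stated two-sided bound, but by a genuinely different mechanism than the paper in the decisive step. The skeleton coincides with the paper's Steps 1--2: you first get $v(x)\to 0$ (note that ``standard elliptic regularity'' here should really be the Brezis--Kato/Moser iteration the paper cites, since the coefficient $|x|^2$ is unbounded and the starting integrability is only $v\in H$), and then a crude bound $v\le Ce^{-|x|^2/4}$, after which the nonlinearity becomes a relative perturbation $c(x)=R(x,v)/v$ with $|c(x)|\le C(1+|x|^2)e^{-|x|^2/2}$. Where you diverge is the sharp step: the paper invokes the Moroz--Van Schaftingen result (Lemma \ref{lemma:ExponentialDecay}) to produce exact radial solutions $h_\pm$ of the perturbed linear equations with asymptotics $|x|^{-N/2}\exp\bigl(-\int_\rho^{|x|}\sqrt{s^2-\omega\pm Ce^{-s^2/4}}\,ds\bigr)$, sandwiches $v$ between them by the maximum principle, and only then extracts the power $|x|^{(\omega-N)/2}$ by explicitly integrating $\int\sqrt{s^2-\omega}\,ds$; you instead work with explicit barriers $w_\beta=|x|^\beta e^{-|x|^2/2}$ via the exact identity $Lw_\beta/w_\beta=(2\beta+N-\omega)-\beta(\beta+N-2)|x|^{-2}$ and remove the $\delta$-loss with the two-term corrector $Aw_\alpha+Bw_{\alpha-2}$. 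That corrector works in a single step, so no iteration is needed: since $Lw_\alpha=-\alpha(\alpha+N-2)\,w_{\alpha-2}$ exactly and $Lw_{\alpha-2}=-4\,w_{\alpha-2}+O(w_{\alpha-4})$, choosing $B$ with $-A\alpha(\alpha+N-2)-4B=\pm A$ makes the barrier a strict super/subsolution at the scale $w_{\alpha-2}$, which dominates $|c|\phi=O\bigl(|x|^4e^{-|x|^2/2}\bigr)w_{\alpha-2}$ --- precisely the ``troublesome $|x|^2$ factor'' you flag, and it is indeed harmless once the crude bound is inserted; in fact the intermediate $w_{\alpha\pm\delta}$ stage is then dispensable, and on the inner sphere plain continuity and positivity of $v$ suffice in place of Harnack. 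What each approach buys: yours is self-contained and elementary, avoiding both the cited lemma and the integral asymptotics computation; the paper's route through Lemma \ref{lemma:ExponentialDecay} handles general potentials $W^2(|x|)/|x|^\gamma$ uniformly, which is why it is reused verbatim for the Gross--Pitaevskii system (Proposition \ref{prop:sharpdecay}) and for the generalization with potential $|x|^\gamma$.
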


\smallbreak

Let us proceed to the proof of the main results. Next, we will build suitable paths to set the proofs of Theorems \ref{thmQLNLS_bounded} and \ref{thmQLNLS_unbounded} in the framework of Theorem \ref{th:abstracttheorem} and Corollary \ref{cor:abstractcor}. The first step is to verify some convexity of the gradient of these new paths.

\begin{lemma}\label{lemma:inequalitygradientNEW}
Let $u, v \in H \cap W^{1,\infty}(\Omega)$ such that $u,v>0$ in $\Omega$. Set
\begin{equation}\label{eq:boundforgammaNEW}
\gamma(t) = f^{-1}\left( \sqrt{(1-t) f^2(u) + t f^2(v)}\right), \quad \textrm{ for all } t \in [0,1].
\end{equation}
Then $\gamma: [0,1] \to H$ is well defined and, pointwise in $\Omega$, 
\begin{equation}\label{eq:fgradientconvex}
t \mapsto |\nabla \gamma (t)|^2 \quad \text{is convex on $[0,1]$}.
\end{equation}
\end{lemma}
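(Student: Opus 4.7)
The plan is to recognize the path $\gamma$ as a particular instance of the construction in Lemma \ref{lem:gin} and then apply that lemma verbatim. Setting $Q(z) := f^2(z)$, the identity defining $\gamma$ can be rewritten as $Q(\gamma(t)) = (1-t)Q(u) + tQ(v)$, which is precisely \eqref{eq:ggm}. Since $Q(0) = 0$ and
\[
Q'(z) = 2f(z)f'(z) = \frac{2f(z)}{\sqrt{1+2f^2(z)}} > 0 \quad \text{on } (0,\infty),
\]
$Q$ is invertible on $[0,\infty)$, so $\gamma(t)$ is well defined pointwise. The monotonicity of $f$ applied to $f^2(\gamma(t)) = (1-t)f^2(u) + tf^2(v)$ yields the pointwise sandwich $\min(u,v) \leq \gamma(t) \leq \max(u,v)$, from which $\gamma(t) \in L^2(\Omega)$ (and $|x|\gamma(t) \in L^2$ in the case $\Omega = \mathbb{R}^N$) as well as the zero trace when $\Omega$ is bounded.

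To obtain the pointwise convexity together with the $L^2$ estimate on $\nabla\gamma(t)$ in a single step, I would apply Lemma \ref{lem:gin} with $M(z) = z^2$, so that $M(|\nabla\gamma(t)|) = |\nabla\gamma(t)|^2$. Since $Q^{-1}(z_1) = f^{-1}(\sqrt{z_1})$, a short computation gives
\[
F_1(z_1) = (Q'\circ Q^{-1})(z_1) = \frac{2\sqrt{z_1}}{\sqrt{1+2z_1}}, \qquad F_2(z_2) = M^{-1}(z_2) = \sqrt{z_2},
\]
so the relevant product in the hypothesis of Lemma \ref{lem:gin} reads
\[
F(z_1,z_2) = F_1(z_1)F_2(z_2) = \frac{2\sqrt{z_1\,z_2}}{\sqrt{1+2z_1}}.
\]

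The only genuine calculation, and the main obstacle, is verifying the concavity of $F$ on $(0,\infty)\times(0,\infty)$; I would do this by a direct Hessian computation. One checks that the two diagonal entries are strictly negative and that the determinant equals $2/[z_2(1+2z_1)^3] > 0$, so the Hessian is negative definite and $F$ is strictly concave on $(0,\infty)\times(0,\infty)$. Once this is in hand, Lemma \ref{lem:gin} applied with $\Gamma = \infty$ delivers both the pointwise convexity claim \eqref{eq:fgradientconvex} and the pointwise inequality $|\nabla\gamma(t)|^2 \leq (1-t)|\nabla u|^2 + t|\nabla v|^2$; the latter, combined with $\nabla u,\nabla v \in L^2$, ensures $\nabla\gamma(t) \in L^2(\Omega)$ and therefore completes the verification that $\gamma:[0,1] \to H$ is well defined. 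Everything beyond identifying the correct $Q$ and checking the sign pattern of the Hessian of $F$ follows directly from the abstract framework already developed.
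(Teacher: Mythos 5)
Your proposal is correct and follows essentially the same route as the paper: its proof likewise sets $Q=f^2$, $M(z)=z^2$, derives the sandwich $\min\{u,v\}\leq\gamma(t)\leq\max\{u,v\}$, and applies Lemma \ref{lem:gin} to $F_1(z_1)=2z_1^{1/2}(1+2z_1)^{-1/2}$ and $F_2(z_2)=z_2^{1/2}$, the only difference being that it checks concavity of $F_1F_2$ via the shortcut of Remark \ref{remark:concavity} (computing $\left(F_1/F_1'\right)'-1=1+8z_1\geq 1$ and $\left(F_2/F_2'\right)'-1=1$) rather than by your direct Hessian computation, which is an equivalent verification and whose determinant $2/\bigl(z_2(1+2z_1)^3\bigr)$ is indeed correct. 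Your observation that the Hessian is negative definite even gives strict concavity on the open quadrant, a marginal strengthening the paper does not record (and does not need for this lemma).
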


\begin{proof} First observe that $\min\{ u, v\} \leq \gamma(t) \leq \max\{u,v\}$ for all $t \in [0,1]$ since $f^2$ is increasing. Then, once \eqref{eq:fgradientconvex} is proved, we infer that $\gamma(t) \in H$ for all $t \in [0,1]$.

We use Lemma 
\ref{lem:gin} and so we show that its hypotheses are satisfied. In this case we have $Q(z)=f^2(z)$, $M(z) = z^2$ and we prove that $(z_1, z_2) \mapsto F(z_1, z_2) = F_1(z_1)F_2(z_2)$ is concave on $(0, \infty) \times [0, \infty)$ where $F_1(z_1) = Q'\circ Q^{-1}(z_1) = \frac{2 z_1^{1/2}}{(1 + 2z_1)^{1/2}}$ by \eqref{odef} and $F_2(z_ 2) = z_2^{1/2}$. It is simple to verify that $F_1$ and $F_2$ are concave and, by direct computations, $\left( \frac{F_ 2}{F_2'} \right)'- 1 =1$ and  $\left( \frac{F_ 1(z_1)}{F_1'(z_1)} \right)'- 1 =1 + 8 z_1 \geq 1$. Hence, as observed at Remark \ref{remark:concavity}, $F$ is concave.
\end{proof} 
Next, we use Lemma \ref{lemma:gpg} to prove that $\gamma$ defined by \eqref{eq:boundforgammaNEW} is locally Lipschitz at $t=0$ whenever $u$ and $v$ are comparable.

\begin{lemma}\label{lemma:goodpathNEW}
Consider $u, v \in H$ such that
\begin{equation}\label{eq:hipderivativeNEW}
u, v > 0 \ \ \text{in} \ \ \Omega \ \ \text{and there is }  \delta \geq 1 \textrm{ such that } \ \ \delta^{-1} v \leq u \leq \delta v \ \ \text{in} \ \ \Omega
\end{equation}
and let $\gamma$ be as in \eqref{eq:boundforgammaNEW}. Then,  $\gamma: [0,1] \to H$ satisfies $\gamma(0) = u$, $\gamma(1)=v$, and 
$\gamma$ is locally Lipschitz at $t = 0$.
\end{lemma}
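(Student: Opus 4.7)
The plan is to reduce the statement to a direct application of Lemma \ref{lemma:gpg} with the choice $Q(z):=f^2(z)$, which is precisely the $Q$ used in Lemma \ref{lemma:inequalitygradientNEW}, so that the path \eqref{eq:boundforgammaNEW} coincides with $Q^{-1}((1-t)Q(u)+tQ(v))$. Since $f$ is odd, $C^\infty$, strictly increasing with $f(0)=0$, we have $\gamma(0)=u$ and $\gamma(1)=v$ automatically once we know $u,v>0$. The bulk of the work is checking the structural conditions a), b), c) of Lemma \ref{lemma:gpg} on this particular $Q$.

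First I would record the explicit formulas
\[
Q'(z)=\frac{2f(z)}{\sqrt{1+2f^2(z)}},\qquad Q''(z)=\frac{2}{(1+2f^2(z))^{2}},
\]
the second being obtained by differentiating the ODE \eqref{odef}. Both are strictly positive on $(0,\infty)$, which covers condition a) and the sign requirements of b) and c). For the ratio bounds, I would invoke the asymptotics \eqref{eq:study_of_f}: as $z\to 0^+$, $f(z)\sim z$, hence $Q'(z)\sim 2z\to 0$ and $Q''(z)\to 2$; as $z\to\infty$, $f(z)\sim 2^{1/4}z^{1/2}$, hence $Q'(z)\to\sqrt{2}$ and $Q''(z)\to 0$ like $z^{-2}$. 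Consequently, for each fixed $c_0\geq 1$, the continuous positive function $(r,z)\mapsto Q'(rz)/Q'(z)$ extends continuously to the compactification $[c_0^{-1},c_0]\times[0,+\infty]$ (the limiting value being $r$ at $z=0$ and $1$ at $z=\infty$), and is therefore bounded above and below by positive constants. The same compactness argument gives the bound required for $Q''$ in condition c) on any $[1,\delta_1]$.

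With the hypotheses of Lemma \ref{lemma:gpg} in hand and the global comparability \eqref{eq:hipderivativeNEW} playing the role of \eqref{eq:hipderivative2}, the local Lipschitz continuity of $\gamma$ at $t=0$ in the $H^1$-norm follows directly: when $\Omega$ is bounded this is the statement of Lemma \ref{lemma:gpg} for $W=W^{1,2}_0(\Omega)=H$. For $\Omega={\mathbb R}^N$ the gradient component of the $H$-norm is handled identically, and the weighted term $\||x|(\gamma(t)-u)\|_{L^2({\mathbb R}^N)}\leq Ct$ comes from applying the $L^2$-part of the proof of Lemma \ref{lemma:gpg} with the measure $|x|^2\,dx$, as permitted by the Remark following that lemma. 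Concretely, the pointwise identity \eqref{eq:fpr} yields
\[
\frac{|\gamma(t)(x)-u(x)|}{t}\leq \frac{Q'(\xi(x))}{Q'(\eta(x))}\,|v(x)-u(x)|\leq C\,(u(x)+v(x)),
\]
and multiplying by $|x|$ and integrating over ${\mathbb R}^N$ gives the weighted $L^2$ bound since $u,v\in H$.

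The main obstacle is the verification that the ratios $Q'(rz)/Q'(z)$ and $Q''(rz)/Q''(z)$ remain bounded as $z\to\infty$, given that $Q'$ saturates to the finite value $\sqrt 2$ and $Q''$ decays to zero. This is not automatic from mere continuity of $Q'$, $Q''$ on $(0,\infty)$, but is supplied by the sharp asymptotics of $f$ at infinity encoded in \eqref{eq:study_of_f}; after that, the compactification trick sketched above closes the argument.
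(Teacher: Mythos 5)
Your proof is correct and follows essentially the same route as the paper: both reduce the statement to Lemma \ref{lemma:gpg} with $Q=f^2$, verify conditions a)--c) via the formulas $Q'(z)=\tfrac{2f(z)}{\sqrt{1+2f^2(z)}}$, $Q''(z)=\tfrac{2}{(1+2f^2(z))^2}$ together with the asymptotics \eqref{eq:study_of_f} (and \eqref{eq:lemma2.2CJ}), and handle the weighted term $\int_{\mathbb{R}^N}|x|^2|w|^2\,dx$ through the pointwise identity \eqref{eq:fpr}, exactly as the paper indicates. You merely make explicit (via the compactification of the ratio bounds) the details the paper leaves to the reader, and your factor of $2$ in $Q''$ corrects a harmless typo in the paper's formula.
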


\begin{proof} 
It is obvious that $\gamma(0) = u$, $\gamma(1)=v$ and so we just need to verify conditions a), b), and c) of Lemma \ref{lemma:gpg}. Take also into account that we can argue as in \eqref{eq:fpr} to infer that $\gamma$ is locally Lipschitz at $t=0$ with respect to the term $w \mapsto \int_{\mathbb{R}^N} |x|^2|w|^2 \,dx $ in the case of $\Omega= \mathbb{R}^N$.

In this case we have $Q = f^2$ and so, by \eqref{odef}, $Q' (z) = \frac{2f(z)}{\sqrt{1 + 2 f^2(z)}}$, which implies condition a), and $Q''(z) = \frac{1}{(1 + 2f^2(z))^2} $. Then b) and c) follows from  \eqref{eq:study_of_f} and \eqref{eq:lemma2.2CJ}.
\end{proof}

Now, once the paths are built and their convexity properties established, we prove Theorem \ref{thmQLNLS_bounded}.

\begin{proof}[Proof of Theorem {\rm{\ref{thmQLNLS_bounded}}}]
Here we take either
\[
A_{\omega}=\{u\in H; \, u>0 \text{ and } \mathcal{A}_\omega'(u)=0\} \text{ or } A_{\mathcal E}=\{u\in \mathcal{M}; \, u>0 \text{ and } \mathcal{E}'|_{\mathcal M}(u)=0\}.
\]

\noindent \emph{Step 1.} Sets $A_{\omega}$ and $A_{\mathcal E}$ are empty or singletons.

Let $u, v\in A_{\omega}$, or $u,v\in A_{\mathcal E}$ (to shorten the proof, we will prove both statements of the theorem at the same time).
By combining Lemma \ref{lemma:inequalitygradientNEW} with the strict convexity of the map $t \mapsto t^2$,  we immediately have for 
$\gamma$ defined in \eqref{eq:boundforgammaNEW}, that 
for $u \neq v$
\[
t \mapsto  \mathcal{A}_\omega(\gamma(t)) \quad \text{and} \quad t\mapsto  \mathcal{E}(\gamma(t)) \quad \text{are strictly convex on [0,1]}
\]
and $\gamma(t)\in \mathcal{M}$ for every $t \in [0,1]$ provided $u, v \in \mathcal{M}$. Thus, in order to apply Theorem \ref{th:abstracttheorem} for $A_{\omega}$ and Corollary \ref{cor:abstractcor} for $A_\mathcal{E}$, one needs 
to prove that $\gamma$ is locally Lipschitz at $0$. For that, we will prove a comparison result like \eqref{eq:hipderivativeNEW}. Let $\bar u$ and $\bar v$ solve \eqref{eq:dual_formulation_quasilinear}, which (for positive solution) can be written in the form
\[
-\Delta w+a(w)w=0, \quad \text{ with } a(w)=(V(x)-\omega)\frac{f(w)}{w\sqrt{1+2f^2(w)}}+ \frac{f^3(w)}{w\sqrt{1+f^2(w)}}\in L^\infty.
\]
Then Hopf's Lemma implies that $1/\delta \leq \bar v/\bar u\leq \delta$ on $\overline \Omega$ as we desired and so we can apply Lemma \ref{lemma:goodpathNEW}.
\smallbreak

\noindent \emph{Step 2.} 
If $\omega \leq \lambda_V$, then $A_{\omega}$ is empty.  This follows after testing \eqref{eq:gs1} by $u$.

\smallbreak

\noindent \emph{Step 3.} The set $A_{\mathcal E}$ is not empty. Moreover, if $\omega > \lambda_V$, then $A_{\omega}$ is not empty as well.

Observe that both $A_{\omega}$ and $A_{\mathcal E}$ contain a global minimum of the corresponding functionals.
In fact, both functionals are bounded from below and coercive: $\mathcal{E}(v)\geq \frac{1}{2} \int_{\Omega} |\nabla v|^2 \, dx + C_1 |\Omega|$, while $\mathcal{A}_{\omega}(v) \geq \frac{1}{2} \int_{\Omega} |\nabla v|^2 \, dx+ C_ 2|\Omega|$, where $C_1$ and $C_2$ are respectively  the absolute minimum levels of the polynomials $t \mapsto -\frac{1}{2}\|V\|_{\infty} t^2 + \frac{1}{4} t^4$ and $t\mapsto-\frac{1}{2}( \|V\|_{\infty} +\omega) t^2 + \frac{1}{4}t^4$, and we have compact embeddings of $H^1_0(\Omega)$ into $L^2(\Omega)$.  By combining $\omega> \lambda_V$ with the behaviour of $f$ close to $t=0$, we can prove that $\varepsilon\mapsto \mathcal{A}_\omega(\varepsilon \varphi_1)$ is negative for $\varepsilon\sim 0$, where $\varphi_1$ is a positive eigenfunction of  $(-\Delta + V(x)I, H^1_0(\Omega))$, and therefore $\min_{H} \mathcal{A_{\omega}}$ is negative. 
\end{proof}

In order to prove Theorem \ref{thmQLNLS_unbounded}, we need first to show the sharp decay of the positive solutions of our problem as in Theorem \ref{prop:sharpdecay_qlS}. This result is a consequence of the following very general result.

\begin{lemma}[{\cite[Proposition 6.1]{MorozVanSchaftingen}}]\label{lemma:ExponentialDecay}
Let $N\geq 1$, $\gamma<2$, and fix $\rho\geq 0$ and a nonnegative function $W\in C^1([\rho,\infty))$. If
\begin{equation}\label{eq:assumptions_ExponentialDecay}
\lim_{s \to +\infty} W(s)>0 \quad \text{ and } \quad \lim_{s\to +\infty} W'(s) s^{1+\beta}=0\ \text{for some $\beta>0$}, 
\end{equation}
then there exists a nonnegative radial function $h:{\mathbb R}^N\setminus \overline{B_\rho(0)}\to {\mathbb R}$ such that
\[
-\Delta h(x)+\frac{W^2(|x|)}{|x|^\gamma} h(x)=0 \qquad \textrm{ for all }  x\in {\mathbb R}^N\setminus \overline{B_\rho(0)}
\]
and
\[
\lim_{|x|\to \infty} h(|x|) |x|^{\frac{N-1}{2}-\frac{\gamma}{4}}\exp\left(\displaystyle \int_\rho^{|x|} \frac{W(s)}{s^\frac{\gamma}{2}}\, ds\right)=1
\]
\end{lemma}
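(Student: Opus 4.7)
The plan is to reduce the PDE to a radial ODE and then apply a Liouville--Green (WKB) analysis. Since we seek a radial solution $h(x) = \phi(|x|)$, the equation reduces to
$$-\phi''(r) - \frac{N-1}{r}\phi'(r) + \frac{W^2(r)}{r^\gamma}\phi(r) = 0, \qquad r > \rho.$$
First, I would remove the first-order term via the Liouville substitution $\phi(r) = r^{-(N-1)/2} u(r)$, which recasts the equation in Schr\"odinger form
$$-u''(r) + Q(r)\, u(r) = 0, \qquad Q(r) := \frac{W^2(r)}{r^\gamma} + \frac{(N-1)(N-3)}{4 r^2}.$$
Because $\gamma < 2$ and $\lim_{s\to\infty} W(s) > 0$, the potential $Q$ is asymptotically dominated by $V(r) := W^2(r)/r^\gamma$, which is positive and bounded away from zero on $[R,\infty)$ for $R$ large enough.

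Next, I would apply the Liouville--Green ansatz
$$u(r) = V^{-1/4}(r)\, w(r)\, \exp\!\left(-\int_\rho^r V^{1/2}(s)\, ds\right).$$
A direct computation turns the equation for $u$ into a linear equation for $w$ of the form
$$w''(r) - 2 V^{1/2}(r)\, w'(r) = \mathcal{R}(r)\, w(r),$$
where the remainder $\mathcal{R}$ involves $V'/V$, $V''/V$, and the correction coming from $(N-1)(N-3)/(4r^2)$. Using $\gamma < 2$ and the hypothesis $W'(s) s^{1+\beta} \to 0$, one checks that $\int^\infty |\mathcal{R}(s)|/V^{1/2}(s)\, ds < \infty$; this is the step where the $C^1$ assumption on $W$ enters crucially.

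I would then reformulate the problem for $w$ as an integral equation on $[R,\infty)$ with the boundary condition $w(\infty) = 1$, and apply the Banach fixed point theorem in the space of bounded continuous functions on $[R,\infty)$ equipped with the sup-norm. This yields a unique solution with $w(r) \to 1$ as $r \to \infty$. The associated $\phi$, originally defined on $[R,\infty)$, is extended backwards to $[\rho,\infty)$ by standard linear ODE theory, producing the required radial function $h$ on $\mathbb{R}^N \setminus \overline{B_\rho(0)}$. After unwinding both substitutions, one finds
$$\phi(r) \sim r^{-(N-1)/2} \left(\frac{r^{\gamma/2}}{W(r)}\right)^{1/2} \exp\!\left(-\int_\rho^r \frac{W(s)}{s^{\gamma/2}}\, ds\right),$$
which, after multiplying $h$ by an appropriate positive constant, matches the stated asymptotic limit $1$.

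The main obstacle will be making the Liouville--Green approximation quantitative, namely controlling the error $\mathcal{R}$ uniformly at infinity so that the fixed-point contraction closes. This is precisely where the decay hypothesis $W'(s) s^{1+\beta} \to 0$ is indispensable: it forces the perturbation term $V'/V^{3/2}$ to be integrable at infinity, ensuring that the construction yields a genuine solution rather than a merely formal asymptotic expansion.
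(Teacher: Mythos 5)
Note first that the paper itself gives no proof of this lemma: it is imported verbatim from \cite{MorozVanSchaftingen}, so your attempt can only be measured against the classical asymptotic-integration argument behind that result, whose skeleton (radial reduction, Liouville substitution $\phi=r^{-(N-1)/2}u$, Liouville--Green ansatz, fixed point, backward extension) you have correctly reproduced. There is, however, a genuine gap exactly at the step you single out as crucial. The hypothesis gives only $W\in C^1$, so $V(r)=W^2(r)/r^\gamma$ is merely $C^1$, and the amplitude $V^{-1/4}$ in your ansatz cannot be differentiated twice: your remainder $\mathcal{R}$ explicitly contains $V''/V$, i.e.\ $W''$, which need not exist. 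As written, the equation for $w$ is not even defined, and no amount of decay of $W'$ repairs this. The fix is to keep the exact phase but use the \emph{smooth model amplitude}: set $u(r)=r^{\gamma/4}\,w(r)\exp\bigl(-\int_\rho^r W(s)s^{-\gamma/2}\,ds\bigr)$. Since $W(r)\to\ell>0$, this differs from $V^{-1/4}$ only by the factor $W^{-1/2}\to\ell^{-1/2}$ (so it produces the stated limit $1$ directly, without your constant rescaling). The terms $\pm\frac{\gamma}{2}W r^{-\gamma/2-1}$ cancel by the choice of the exponent $\gamma/4$, and the residual zeroth-order coefficient is $-W'(r)r^{-\gamma/2}+O(r^{-2})$, involving only \emph{one} derivative of $W$; the weighted integrability you need, $\int^\infty\bigl(|W'(s)|/W(s)+s^{\gamma/2-2}\bigr)\,ds<\infty$, then follows precisely from $W'(s)s^{1+\beta}\to0$, $\lim W>0$, and $\gamma<2$, and your contraction argument closes. (Minor point in the same computation: your $w$-equation is missing the first-order term $-2(A'/A)w'$ coming from differentiating the amplitude; with the smooth amplitude $A=r^{\gamma/4}$ this term is $O(1/r)\,w'$ and is absorbed harmlessly, but it should appear.)

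The second gap is the final sentence about extending backwards from $[R,\infty)$ to $(\rho,\infty)$ ``by standard linear ODE theory'': the lemma asserts $h\geq0$ on all of $\mathbb{R}^N\setminus\overline{B_\rho(0)}$, and linear extension alone does not preserve sign --- a priori the extension could vanish and change sign in $(\rho,R]$, especially since $W$ itself may vanish on compact sets (only its limit at infinity is assumed positive). This is rescuable by a short barrier-type argument you should supply: if $\phi(r_0)=0$ for some $r_0>\rho$ with $\phi>0$ on $(r_0,\infty)$, then $\phi'(r_0)>0$ by uniqueness for the ODE, and since $\bigl(r^{N-1}\phi'\bigr)'=r^{N-1-\gamma}W^2\phi\geq0$ on $(r_0,\infty)$, the quantity $r^{N-1}\phi'$ remains at least $r_0^{N-1}\phi'(r_0)>0$ there, so $\phi$ is increasing on $(r_0,\infty)$; this contradicts $\phi(r)\to0$, which holds because $\gamma<2$ forces $\int^\infty W(s)s^{-\gamma/2}\,ds=+\infty$ and the Liouville--Green profile decays faster than any power. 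With the $C^1$ repair above and this positivity argument added, your proof becomes complete and is in substance the same route as the cited proposition.
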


\begin{proof}[Proof of Theorem {\rm\ref{prop:sharpdecay_qlS}}]
\emph{Step 1.} We claim that $v(x)\to 0$ as $|x|\to \infty$.

Observe that
\[
-\Delta v+(|x|^2+1)v \frac{f(v)}{v\sqrt{1+2f^2(v)}}+ v \frac{f^3(v)}{v\sqrt{1+2f^2(v)}}=(1+\omega) v \frac{f(v)}{v\sqrt{1+2f^2(v)}}\,,
\]
and therefore, $v$ solves
\[
-\Delta v +  a(v)v \leq (1+\omega) b(v) v,
\]
with 
\[
a(t)=\frac{f(t)}{t\sqrt{1+2f^2(t)}}+\frac{f^3(t)}{t\sqrt{1+2f^2(t)}},\qquad b(t)=\frac{f(t)}{t\sqrt{1+2f^2(t)}}.
\]
By \eqref{eq:study_of_f} and \eqref{eq:lemma2.2CJ}, we have
\[
a,b\in L^\infty(0,\infty), \ \text{ and } a \text{ is bounded away from 0}.
\]
In particular, there exist $\kappa_1,\kappa_2>0$ such that
\[
-\Delta v+\kappa_1 v\leq \kappa_2 v \text{ in } {\mathbb R}^N.
\]
Now a classical Brezis-Kato/Moser Iteration Scheme yields that $v\in L^\infty({\mathbb R}^N)$ (see \cite[pages 1264-1265]{NTTV_JEMS}). Moreover, reasoning as in \cite[\S 3.4]{GilbargTrudinger} we have $|\nabla v|\in L^\infty({\mathbb R}^N)$ which, combined with $v \in H$ and  the mean value theorem, implies the claim of this step.

\smallbreak
\noindent \emph{Step 2.} 
As an intermediate step, we prove that there exist $C > 0$ and $R > 0$
such that  
\begin{equation}\label{eq:exponential_nonopt}
v(x) \leq C e^{-\frac{|x|^2}{4}} \qquad 
\textrm{ for all } |x| \geq R \,.
\end{equation}
Rewrite \eqref{eq:aeq} as 
\begin{equation}\label{eq:ede}
-\Delta v+(|x|^2-\omega)v=(|x|^2-\omega)\left(v-\frac{f(v)}{\sqrt{1+2f^2(v)}}\right)-\frac{f^3(v)}{\sqrt{1+2f^2(v)}}.
\end{equation}
Since $v\to 0$ as $|x|\to \infty$, then by 
\eqref{eq:study_of_f}, 
for each $\varepsilon>0$ there exists $R>0$ such that the right-hand side of \eqref{eq:ede} is less than or equal to $(|x|^2-\omega)\varepsilon v$ outside $B_R(0)$, and consequently
\[
-\Delta v +(|x|^2-\omega)(1-\varepsilon) v\leq 0 \quad \text{ in } {\mathbb R}^N\setminus B_R(0).
\]
Set $\rho:=\max\{R,2\sqrt{\omega}\}$. Then $W(s):=\sqrt{(1-s^{-2}\omega)(1-\varepsilon)}$
satisfies \eqref{eq:assumptions_ExponentialDecay}, since $W'(s)\sim Cs^{-3}$ as $s\to \infty$. Then we apply Lemma \ref{lemma:ExponentialDecay} with $\gamma=-2$, obtaining the existence of a positive function $h$ satisfying
\[
-\Delta h(x)+(|x|^2-\omega)(1-\varepsilon) v=0 \text{ in } {\mathbb R}^N\setminus \overline{B_\rho(0)}
\]
and
\begin{equation}\label{eq:H-first-estimate}
h(x)\sim |x|^{-\frac{N}{2}}\exp\left( -\int_\rho^{|x|} \sqrt{(s^2-\omega)(1-\varepsilon)}\,ds \right) \qquad \text{ as } |x|\to \infty.
\end{equation}
Thus, for $|x|$ sufficiently large, we infer that 
\[
h(x)\leq C e^{-\frac{|x|^2}{4}} \qquad |x| \geq R.
\]
By replacing $h$ by $a h(x)$, where $a$ is a sufficiently large constant, we can assume that $v(x)\leq h(x)$ on $\partial B_R(0)$. Hence, by the maximum principle, $v(x) \leq h(x)$ on ${\mathbb R}^N\setminus \overline{B_R(0)}$ and thus the claim \eqref{eq:exponential_nonopt} follows.

\smallbreak
\noindent \emph{Step 3.} 
Finally, we show that the sharp decay \eqref{eq:shd}
holds.

By \eqref{eq:study_of_f}, for small $v$ the expressions 
\[
\left| 1-\frac{f(v)}{v\sqrt{1+2f^2(v)}} \right| \textrm{ and } \frac{f^3(v)}{v\sqrt{1+2f^2(v)}}
\]
are bounded by $c v^2 \leq C e^{-|x|^2/2}$. Hence we have, for some $C>0$, that
\[
-\Delta v+(|x|^2-\omega-C e^{-\frac{|x|^2}{4}})\leq 0,\qquad -\Delta v+(|x|^2-\omega+C e^{-\frac{|x|^2}{4}})\geq 0
\]
in ${\mathbb R}^N\setminus \overline{B_R(0)}$. Observing that $W_{\pm}(s):=\sqrt{1-s^{-2}\omega \pm C s^{-2}e^{-\frac{s^2}{4}}}$ satisfy the assumptions of Lemma \ref{lemma:ExponentialDecay},  with $W'(s)\sim s^{-3}$ as $s\to \infty$, for a $\tilde\rho \geq R$ sufficiently large,  there exist positive functions $h_+,h_-$ with:
\[
-\Delta h_{\pm}+ (|x|^2-\omega \pm C e^{-\frac{|x|^2}{4}})h_\pm =0 \qquad \textrm{ for all }  |x|\geq \tilde\rho,
\]
and
\[
h_\pm(x)\sim  |x|^{-\frac{N}{2}}\exp \left(-\int_\rho^{|x|} \sqrt{s^2-\omega \pm C e^{-s^2/4}}   \, ds\right) \quad \text{ as } |x|\to \infty.
\]
Thus, by the maximum principle, there exist positive constants $a_+$ and $a_-$ such that
\[
a_+ h_+ \leq u_i \leq a_-h_- \qquad \textrm{ for all }  |x|\geq \tilde\rho.
\]
Since
\[
\sqrt{s^2-\omega - C e^{-s^2/4}}\geq \sqrt{s^2-\omega}-\sqrt{C e^{-s^2/4}}\quad  \text{ and }\quad
\sqrt{s^2-\omega + C e^{-s^2/4}}\leq \sqrt{s^2-\omega}+\sqrt{C e^{-s^2/4}} ,
\]
and $e^{-s^2/2}$ is an integrable function, we get the existence of two constants $C_1,C_2>0$ such that
\[
C_1 |x|^{-\frac{N}{2}}\exp\left(- \int_{\tilde\rho}^{|x|} \sqrt{s^2-\omega} \, ds \right) \leq u_i(x)  \leq C_2 |x|^{-\frac{N}{2}}\exp\left(- \int_{\tilde\rho}^{|x|} \sqrt{s^2-\omega} \, ds \right). 
\]
and the statement now follows from 
\begin{equation}
 \int_{\tilde\rho}^{|x|}\sqrt{s^2-\omega}\, ds= \frac{1}{2}\left(|x| \sqrt{|x|^2-\omega}-\omega \ln(\sqrt{|x|^2-\omega}+|x|)  \right) + C(\tilde{\rho}) 
 \end{equation}
 and 
\begin{equation}
|x|^2 \geq |x|\sqrt{|x|^2-\omega} \geq |x|^2 - M \,, \qquad 
|x| \geq  \sqrt{|x|^2-\omega} \geq m |x|
\end{equation}
for appropriate $M, m > 0$ depending on $\omega$ and all sufficiently large $|x|$. 
\end{proof}

\begin{proof}[Proof of Theorem {\rm\ref{thmQLNLS_unbounded}}]

Take  
\[
A_\omega=\{v\in H; \ v>0 \text{ and } \mathcal{A}_\omega'(v)=0 \}.
\]
\noindent \emph{Step 1.} We claim that $A_{\omega}$ is empty or a singleton.

Suppose $u, v \in A_{\omega}$ with $u \neq v$. As in the bounded domain case we have that
\[
t \mapsto  \mathcal{A}_\omega(\gamma(t))\quad  \text{ is strictly convex on [0,1]}
\]
with $\gamma$ defined in \eqref{eq:boundforgammaNEW}.  
It remains to show that, given $u, v \in A_{\omega}$,
there are $C_1, C_2 > 0$ such that  
\[
C_1\leq \frac{v(x)}{u(x)}\leq C_2 \qquad \text{ for all }  x\in {\mathbb R}^N,
\]
which follows from Theorem \ref{prop:sharpdecay_qlS}.

\smallbreak \noindent \emph{Step 2.} We show that  $A_{\omega}$ is not empty if, and only if, $\omega> \lambda_V$. 

Since $V(x)=|x|^2$ implies a compact embedding $H \hookrightarrow L^2({\mathbb R}^N)$, we infer that
\[
\lambda_V = \inf_{v \in H} \frac{\int_{\mathbb{R}^N} |\nabla v|^2  + V(x) v^2 dx}{\int_{\mathbb{R}^N} v^2 dx},
\]
the first eigenvalue of $(-\Delta + V(x)I, H)$, is positive.

By testing \eqref{eq:gs1} by $u$ we see that $A_{\omega}$ is empty if $\omega \leq \lambda_V$.

Now, if $\omega > \lambda_V$, as in the bounded domain case, we have that $m:=\inf_{H} \mathcal{A_{\omega}}$ is negative. 
 Moreover, observe that
\begin{equation}\label{eq:goodgradientRN}
\mathcal{A_{\omega}} (v) \geq \frac{1}{2} \int_{\mathbb{R}^N} |\nabla v|^2 \, dx + C_3 |B(0, \sqrt{|\omega|})| 
\end{equation}
where $C_3$ is the minimum of the function $t \mapsto - \frac{1}{2}|\omega| t^2 + \frac{1}{4} t^4$ for $t \in [0, \infty)$. 
Next, we prove that there exists a positive function $v \in H$ that satisfies  $\mathcal{A_{\omega}}(v) = m := \inf_{H} \mathcal{A_{\omega}}$, hence $v\in A_\omega$. Observe that, unlike for $\Omega$ bounded, it is not immediate  that minimizing sequences are bounded in $\|\cdot\|_H$, due to the term $\int_{\mathbb{R}^N}|x|^2 (v(x))^2\, dx$ in the definition of the norm.

However, since $\mathcal{A_{\omega}}$ is bounded from below, there is 
a minimizing sequence $(v_n) \subset H$ of $\mathcal{A_{\omega}}$. 
Since $\mathcal{A}_\omega(|v|) = \mathcal{A}_\omega(v)$, we can suppose that $v_n \not \equiv 0$, $v_n \geq 0$ and $\mathcal{A}_\omega(v_n) <0$ for all $n\in \mathbb{N}$. In addition, since 
$\mathcal{A}_\omega(v^*) \leq \mathcal{A}_\omega(v)$ for all $v\geq 0$ in $H^1(\mathbb{R}^N)$, where $v^*$ stands for the Schwarz symmetrization of $v$, we can assume that $v_n$ are Schwarz symmetric.

From
\[
\mathcal{A}_\omega(v_n) =\frac{1}{2}\int_{\mathbb{R}^N} |\nabla v_n|^2\, dx+\frac{1}{2}\int_{\mathbb{R}^N} (|x|^2-\omega) (f(v_n))^2\, dx +\frac{1}{4}\int_{\mathbb{R}^N} (f(v_n))^4\, dx  <0,
\]
we infer that
\begin{multline*}
\frac{1}{2}\int_{\mathbb{R}^N} |\nabla v_n|^2\, dx+\frac{1}{2}\int_{\mathbb{R}^N} ||x|^2-\omega| (f(v_n))^2\, dx +\frac{1}{4}\int_{\mathbb{R}^N} (f(v_n))^4\, dx \leq \int_{|x|^2 < \omega} (\omega -|x|^2) (f(v_n))^2\, dx\\
\leq \frac{1}{8}\int_{|x|^2 < \omega} (f(v_n))^4\, dx + C_0(\omega) \leq \frac{1}{8}\int_{\mathbb{R}^N} (f(v_n))^4\, dx + C_0(\omega)
\end{multline*}
for some $C_0(\omega) > 0$, and hence
\begin{equation}\label{eq:boundmany1}
\frac{1}{2}\int_{\mathbb{R}^N} |\nabla v_n|^2\, dx+\frac{1}{2}\int_{\mathbb{R}^N} ||x|^2-\omega| (f(v_n))^2\, dx +\frac{1}{8}\int_{\mathbb{R}^N} (f(v_n))^4\, dx 
\leq C_0(\omega).
\end{equation}

From \eqref{eq:boundmany1} we infer that
\begin{equation}\label{eq:boundf2}
\begin{aligned}
\int_{\mathbb{R}^N} (f(v_n))^2\, dx &\leq \int_{|x|^2 < 2 \omega} (f(v_n))^2\, dx + \frac{1}{\omega}\int_{|x|^2 \geq 2 \omega}(|x|^2 - \omega)(f(v_n))^2 \, dx \leq C_1(\omega) + \int_{\mathbb{R}^N} (f(v_n))^4 \, dx \\
&\leq C_2(\omega)\,.
\end{aligned}
\end{equation}
Hence, from \eqref{eq:boundmany1}, \eqref{eq:boundf2} 
one has 
\begin{equation}\label{eq:boundmany}
\frac{1}{2}\int_{\mathbb{R}^N} |\nabla v_n|^2\, dx+\frac{1}{2}\int_{\mathbb{R}^N} (|x|^2 + 1) (f(v_n))^2\, dx +\frac{1}{8}\int_{\mathbb{R}^N} (f(v_n))^4\, dx 
\leq C_3(\omega).
\end{equation}
Since $v_n \in H^{1}(\mathbb{R}^N)$ is Schwarz symmetric, 
then $v_n \in C (\mathbb{R}^N\backslash\{0\})$ and so $v_n(1) = M_n$ is well defined. As $v_n$ is decreasing in the radial direction, 
\eqref{eq:boundmany} implies $M_n \leq C_4(\omega)$. Indeed,
\[
(f(M_n))^4 |B_1(0)| \leq \int_{|x| \leq 1} (f(v_n))^4 \, dx\leq 8 \,C_3(\omega)\,.
\]
From \eqref{eq:study_of_f} there exists a positive constant $C_5(\omega)$ such that $t \leq C_5(\omega) f(t)$ for all $t \in [0, C_4(\omega)]$. Hence, from \eqref{eq:lemma2.2CJ} and \eqref{eq:boundmany},
\begin{multline*}
\int_{\mathbb{R}^N} |x|^2 v_n^2 \, dx = \int_{|x| \leq 1} |x|^2 v_n^2 \, dx  + \int_{|x| > 1} |x|^2 v_n^2 \, dx \leq  \int_{|x| \leq 1} v_n^2 \, dx  + C_5^2(\omega) \int_{|x| > 1} |x|^2 (f(v_n))^2 \, dx \\
\leq \int_{|x| \leq 1} [(f(v_n))^2 + 2 (f(v_n))^4] \, dx  + C_5^2(\omega) \int_{|x| > 1} |x|^2 (f(v_n))^2 \, dx \leq C_6(\omega) \,.
\end{multline*}
In particular, $(v_n)$ is uniformly bounded in $H$ and standard arguments show that a weak limit $v$ is a global minimizer of $\mathcal{A}_\omega$.

\smallbreak

Therefore, we conclude that $v \in H$ is such that $v \geq 0$ and realizes $m:=\inf_{H} \mathcal{A_{\omega}}$. Then, by the maximum principle, we infer that $v \in A_{\omega}$, as desired.
\end{proof}

\begin{remark}
All results proved in this section can be extended to
\begin{equation}\label{eq:generalequation}
  -\Delta u-|u|^{\alpha-2}u\Delta |u|^\alpha+ |x|^\gamma u +  |u|^{p-1} u =\omega u,\qquad  \text{in $\Omega$},
\end{equation}
with $\alpha>1$, $p>1$, and $\gamma>0$, a generalization considered in \cite{AdachiWatanabe} (without the trapping potential, and in the defocusing case). The only difference in the proof of Theorems {\rm{\ref{thmQLNLS_bounded}}} and {\rm{\ref{thmQLNLS_unbounded}}} would be the definition of  the energy functional  
\[
\mathcal{E}(v)  =\frac{1}{2}\int_{\Omega} |\nabla v|^2 \, dx- \frac{1}{2} \int_\Omega (f(v))^2 \, dx+ \frac{1}{2}\int_{{\mathbb R}^N} |x|^\gamma (f(v))^2
  + \frac{1}{p+1}\,dx\int_{{\mathbb R}^N} |f(v)|^{p+1}\, dx,
\] 
and the action functional 
\[
\mathcal{A}_\omega(v)=\mathcal{E}(v)-  \omega \int_\Omega (f(v))^2\, dx,
\]
where $f$ is the odd function which solves
\[
f'(t)=\frac{1}{\sqrt{1+\alpha f^{2\alpha -2}(t)}} \text{ in } (0,\infty),\quad f(0)=0.
\]

\end{remark}

\begin{remark}
We end this section by observing that Selvitella in \cite{selvitella} proved existence results for some general problems that include \eqref{eq:generalequation}, proving also \emph{nonoptimal} decay estimates.
\end{remark}

\subsection{Defocusing Gross-Pitaevskii system}\label{subsecGP}
Consider the following system with $k$ equations:
\begin{equation}\label{eq:BEC_bounded}
-\Delta u_i +V(x) u_i +u_i \sum_{j=1}^k \beta_{ij} u_j^2=\omega_i u_i \quad \text{ in } \Omega,\qquad i=1,\ldots, k,
\end{equation}
where $V(x)=|x|^2$ if $\Omega={\mathbb R}^N$, or $V\in L^\infty(\Omega)$ if $\Omega\subset {\mathbb R}^N$ is a bounded regular domain, and $N\geq 1$. Such system arises when looking for standing wave solutions $\phi_i(t,x)=e^{\imath \omega_i t}u_i(x)$ of the following Gross-Pitaevskii system:
\begin{equation}\label{eq:BEC_complex}
\imath \partial_t \phi_i-\Delta \phi_i +\phi_i \sum_{j=1}^k \beta_{ij} |\phi_j|^2=0\,.
\end{equation}
We will assume that
\begin{equation}\label{eq:assumptions_B}
B =  (\beta_{ij})_{ij}\ \text{symmetric, and either positive semidefinite or positive definite.}
\end{equation}
The symmetry assumption on $B$ makes the problem variational. The positive semidefiniteness implies that $\beta_{ii} \geq 0$ for $i=1, \ldots, k$. In the case $\beta_{ii}> 0$, for every $i=1, \ldots,k$, it is usually said that the self-interacting parameters are of defocusing type.

\begin{remark}
Assume $\beta_{11}, \ldots, \beta_{kk}> 0$. With $k=2$, $(\beta_{ij})_{ij}$ being positive semidefinite is equivalent to $\beta_{12}^2\leq \beta_{11}\beta_{22}$. For a general $k$, this assumption is fulfilled for instance when the off-diagonal terms $\beta_{ij}$ ($i\neq j$) are small with respect to the diagonal ones $\beta_{ii}$.
\end{remark}

 These systems appear as a model in the physical phenomenon of Bose-Einstein Condensation or in Nonlinear Optics (see for instance \cite{ChangLinLinLin,Sirakov,Timmermans} and references therein for an easy-to-follow physical description). From a mathematical point of view, there has been an intense activity in the last ten years, both regarding existence results (check for example the introduction of \cite{SoaveTavares}, or \cite{BartschJeanjeanSoave,NorisTavaresVerzini} and their references) as well as the regularity and asymptotic study of solutions as the competition increases, namely $\beta_{ij}\to +\infty$ for $i\neq j$ (see the recent survey \cite{SoaveTavaresTerraciniZilio} for an overview on this topic). One of the interesting features of these systems is that they admit solutions $(u_1,\ldots, u_k)$ with trivial components, that is $u_i\equiv 0$ for some $i$.

\smallbreak

In order to consider solutions of \eqref{eq:BEC_bounded}, there are two (nonequivalent) points of view. The first is to consider $\omega=(\omega_1,\ldots, \omega_k)\in {\mathbb R}^k$ fixed, and look for solutions as critical points of the action functional
\begin{align}\label{eq:ActionSystems}
\mathcal{A}_\omega(u_1,\ldots, u_k) &=\frac{1}{2}\sum_{i=1}^k \int_\Omega (|\nabla u_i|^2 +(V(x)-\omega_i) u_i^2)\, dx+ \sum_{i=1}^k \frac{\beta_{ii}}{4}\int_\Omega u_i^4 \, dx+\sum_{i<j} \frac{\beta_{ij}}{2}\int_\Omega u_i^2 u_j^2\, dx\,,\\
&=\frac{1}{2}\sum_{i=1}^k \int_{\Omega} (|\nabla u_i|^2+(V(x)-\omega_i)u_i^2)\, dx+\frac{1}{4} \int_{\Omega} [u_1^2 \ldots u_k^2] B [u_1^2 \ldots u_k^2]^T\, dx
\end{align}
defined in $H$, where $H=\{u\in (H^1({\mathbb R}^N)\cap L^4(\mathbb{R}^N))^k;\ \int_{{\mathbb R}^N} u_i^2 |x|^2\, dx<\infty\ \textrm{ for all } i \}$ and 
\[
\|u\|_H:=\sum_{i=1}^k\left(\int_{\mathbb{R}^N} (|\nabla u|^2+|x|^2 u_i^2) \,dx \right)^{1/2}+\sum_{i=1}^k \left(\int_{\mathbb{R}^N} u_i^4\, dx\right)^{1/4}
\] if $\Omega={\mathbb R}^N$, or $H  =  (H^1_0(\Omega)\cap L^4(\Omega))^k$ and
\[
\|u\|_H:=\sum_{i=1}^k\left(\int_{\Omega} |\nabla u|^2\, dx\right)^{1/2}+\sum_{i=1}^k \left(\int_{\Omega} u_i^4\, dx\right)^{1/4}
\]
if $\Omega$ is bounded. 

The second point of view consists of fixing the $L^2$--norm (i.e., the mass) rather that the $\omega_i$'s: 
\[
\mathcal{M}=\{(u_1,\ldots, u_k)\in H; \ \int_\Omega u_i^2=1,\ \textrm{ for all } i=1,\ldots, k\}
\]
and to look for solutions of \eqref{eq:BEC_bounded} as critical points of the energy functional
\begin{align}\label{eq:EnergySystems}
\mathcal{E}(u_1,\ldots, u_k)&=\frac{1}{2}\sum_{i=1}^k \int_\Omega (|\nabla u_i|^2+V(x) u_i^2) + \sum_{i=1}^k \frac{\beta_{ii}}{4}\int_\Omega u_i^4 +\sum_{i<j} \frac{\beta_{ij}}{2}\int_\Omega u_i^2 u_j^2\\
				&=\frac{1}{2}\sum_{i=1}^k \int_{\Omega} (|\nabla u_i|^2+V(x)u_i^2)+\frac{1}{4} \int_{\Omega} [u_1^2 \ldots u_k^2] B [u_1^2 \ldots u_k^2]^T
\end{align}
constrained to $\mathcal{M}$. Among these critical points, minimizers of $\mathcal{E}|_\mathcal{M}$ are called \emph{ground state solutions}. Within this second framework, the parameters $\omega_i$ in \eqref{eq:BEC_bounded} are not fixed a priori, but  appear as Lagrange multipliers. These solutions are physically relevant since both the energy and the mass are, at least formally, conserved along trajectories $t\mapsto (\phi_1(t),\ldots, \phi_k(t))$ of the solutions of system \eqref{eq:BEC_complex}. 

In what follows, we will say that $(u_1,\ldots, u_k)$ is \emph{positive} if $u_i>0$ for every $i=1,\ldots, k$.  Using the first point of view leads to the set
\begin{equation}\label{eq:A_action2}
A_{\omega}=\left\{(u_1,\ldots, u_k)\in H;\ u_i>0\ \textrm{ for all } i,\ \mathcal{A}_{\mathbf{\omega}}'(u_1,\ldots, u_k)=0 \right\},
\end{equation}
while the second leads to
\begin{equation}\label{eq:A_energy2}
A_{\mathcal E}=\{(u_1,\ldots,u_k) \in \mathcal{M}; \, u_i>0 \ \textrm{ for all } i,\  \mathcal{E}'|_{\mathcal M}(u_1,\ldots, u_k)=0\}.
\end{equation}
These sets in principle do not coincide, since two critical points of $\mathcal{E}|_\mathcal{M}$, or even two ground states, may have different associated Lagrange multipliers. 

Observe that if the set $A_{\omega}$ is not empty then necessarily $\omega_i>\lambda_V$ for at least one $i \in \{1, \ldots, k\}$, where $\lambda_V$ stands for the first eigenvalue of $(-\Delta+V(x)I, H$). The equivalence is not clear since the functional $\mathcal{A}_\omega$ admits, in general, critical points with trivial components. 

On the other hand, the set ${A}_{\mathcal{E}}$ is non empty if $B$ is positive semidefinite, containing necessarily a ground state solution, that is, a minimizer of $\mathcal{E}|_\mathcal{M}$. In fact, the level $\min_\mathcal{M} \mathcal{E}$ is clearly achieved by $(u_i, \ldots, u_k)$, a critical point of $\mathcal{E}|_\mathcal{M}$ with all positive components (by eventually taking $|u_i|$ and using the maximum principle). Observe that $H\subset\subset L^2(\Omega)$ in both cases $\Omega=\mathbb{R}^N$ or $\Omega$ bounded.

 \smallbreak

Let us first discuss the case $\Omega$ bounded.  Using Theorem \ref{th:abstracttheorem} and Corollary \ref{cor:abstractcor}, our contribution is the following new result.

\begin{theorem}\label{thm:BEC_kequations} Let $\Omega$ be a bounded regular domain and $B=(\beta_{ij})_{ij}$ a symmetric matrix.
\begin{enumerate}[i)]
\item Assume that $B$ is positive definite. Then, for $\omega_1,\ldots, \omega_k \in {\mathbb R}$  fixed, the functional $\mathcal{A}_\omega$ has at most one positive critical point, that is, system \eqref{eq:BEC_bounded} has at most a positive solution.
\item Assume that $B$ is positive semidefinite. Then there exists exactly one positive critical point of $\mathcal{E}|_\mathcal{M}$. In particular, the ground state is unique, up to sign.
\end{enumerate}
\end{theorem}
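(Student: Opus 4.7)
The plan is to apply Theorem \ref{th:abstracttheorem} to $A_{\omega}$ in case (i) and Corollary \ref{cor:abstractcor} to $A_{\mathcal{E}}$ in case (ii), with the componentwise path
\begin{equation}
\gamma(t)(x) := \Bigl(\sqrt{(1-t)u_1^2(x)+t v_1^2(x)},\ldots,\sqrt{(1-t)u_k^2(x)+t v_k^2(x)}\Bigr),\qquad t\in[0,1].
\end{equation}
This $\gamma$ satisfies condition a) of Theorem \ref{th:abstracttheorem} trivially, and in case (ii) one has $\gamma(t)\in\mathcal{M}$ because $\int_\Omega \gamma_i(t)^2\,dx = (1-t)\int_\Omega u_i^2\,dx + t\int_\Omega v_i^2\,dx = 1$ for every $i$.

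To verify condition b), note that elements of $A_{\omega}$ and $A_{\mathcal{E}}$ are positive classical solutions of \eqref{eq:BEC_bounded} by elliptic regularity (in case (ii) the $\omega_i$ are the Lagrange multipliers attached to the particular solution). Each component $u_i$ then solves a linear equation $-\Delta u_i + c_i(x) u_i = 0$ in $\Omega$ with $c_i:=V-\omega_i+\sum_j \beta_{ij}u_j^2\in L^\infty(\Omega)$ and $u_i=0$ on $\partial\Omega$, so Hopf's Lemma gives $\partial u_i/\partial\nu<0$ on $\partial\Omega$, and the same holds for $v_i$. Lemma \ref{lemma:gradientboundarycomparison} then supplies $\delta\geq 1$ with $\delta^{-1}v_i\leq u_i\leq \delta v_i$ for each $i$, and applying Corollary \ref{corollary_of_lemma:gpg} componentwise yields the local Lipschitz property of $\gamma$ at $t=0$.

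For condition c), the kinetic contribution $\sum_i \tfrac{1}{2}\int_\Omega |\nabla \gamma_i(t)|^2\,dx$ is convex by Lemma \ref{lemma:strictconvexity} with $p=2$, and the terms involving $Vu_i^2$ or $-\omega_i u_i^2$ become linear in $t$ since $\gamma_i^2(t)=(1-t)u_i^2+t v_i^2$. For the quartic coupling, set $a(x):=(u_1^2,\ldots,u_k^2)$ and $b(x):=(v_1^2,\ldots,v_k^2)$, so that, pointwise,
\begin{equation}
\bigl[\gamma_1^2\,\cdots\,\gamma_k^2\bigr] B \bigl[\gamma_1^2\,\cdots\,\gamma_k^2\bigr]^T = \bigl[(1-t)a+tb\bigr] B \bigl[(1-t)a+tb\bigr]^T,
\end{equation}
a quadratic polynomial in $t$ whose second derivative equals $2(a-b) B (a-b)^T\geq 0$ by the symmetry and positive semidefiniteness of $B$. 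Integrating preserves convexity, so both $\mathcal{A}_\omega\circ\gamma$ and $\mathcal{E}\circ\gamma$ are convex on $[0,1]$.

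Strict convexity, and hence uniqueness, will follow case by case. In (i), if $u\neq v$ then $u_i\not\equiv v_i$ for some $i$; by continuity $a(x)\neq b(x)$ on a set of positive measure, and positive definiteness of $B$ makes $(a-b) B (a-b)^T$ strictly positive there, forcing strict convexity of the quartic contribution. In (ii), if $u\neq v$ on $\mathcal{M}$, pick $i$ with $u_i\neq v_i$; since $\|u_i\|_{L^2}=\|v_i\|_{L^2}=1$ and $u_i,v_i>0$, linear dependence $v_i=\alpha u_i$ would force $\alpha=1$, so $u_i$ and $v_i$ are linearly independent and Lemma \ref{lemma:strictconvexity} gives strict convexity of the kinetic term alone. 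Part iii) of Theorem \ref{th:abstracttheorem} (respectively Corollary \ref{cor:abstractcor}) then concludes the uniqueness statements. Existence in (ii) is a standard direct-method argument: $\mathcal{E}|_\mathcal{M}$ is coercive (by positive semidefiniteness of $B$ and $V\in L^\infty$) and weakly lower semicontinuous; the compact embedding $H\hookrightarrow (L^2(\Omega))^k$ produces a minimizer, and replacing its components by their absolute values and invoking the maximum principle yields strict positivity. The main subtlety the argument must handle is the coupling through $B$: the componentwise squaring in the definition of $\gamma$ is what decouples the gradient terms into single-component inequalities handled by Lemma \ref{lemma:strictconvexity}, while simultaneously collapsing the quartic interaction into a quadratic polynomial in $t$ whose convexity is precisely the spectral hypothesis on $B$.
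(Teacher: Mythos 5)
Your proposal is correct and follows essentially the same route as the paper: the componentwise path $\gamma_i(t)=\sqrt{(1-t)u_i^2+tv_i^2}$, Hopf's Lemma with Lemma \ref{lemma:gradientboundarycomparison} and Corollary \ref{corollary_of_lemma:gpg} for the Lipschitz condition, convexity of the quadratic form $z\mapsto zBz^T$ for the coupling term, strict convexity from positive definiteness of $B$ in case (i) and from linear independence of some pair $(u_i,v_i)$ (forced by the mass constraint) via Lemma \ref{lemma:strictconvexity} in case (ii), and the standard minimization plus maximum-principle argument for existence. Your explicit observation that the quartic term reduces along $\gamma$ to a quadratic polynomial in $t$ with second derivative $2(a-b)B(a-b)^T$ is a nice way of making transparent what the paper states more briefly, but it is the same underlying argument.
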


\begin{remark}\label{rem:nonuniqueness_Bpositivedefinite}
As it will be illustrated in the proof of Theorem {\rm{\ref{thm:BEC_kequations}}}, if $B$ is not positive definite then in item i) the only thing we can conclude in general is that either $A_\omega=\emptyset$ or there exists $u=(u_1,\ldots, u_k)\in A_\omega$ such that
\begin{equation}\label{eq:inclusionsystems}
A_\omega\subset \{(\alpha_1 u_1,\ldots, \alpha_k u_k);\ \alpha_i>0\ \text{ for every } i\}.
\end{equation}
Observe that the strict convexity of Lemma {\rm{\ref{lemma:strictconvexity}} is essential in this case.} There are particular cases when $B$ is positive semidefinite, $\det(B)=0$, and $A_\omega$ is not a singleton: if for instance $\Omega$ is bounded with $\lambda_1(\Omega)<1$, $V(x)\equiv 0$, $k=2$, $\omega_1=\omega_2=1$ and $\beta_{ij}=1$ for every $i,j=1,2$, then (taking also into account \eqref{eq:inclusionsystems} and Example \ref{example-Allen-Cahn}):
\[
A_{0}=\left\{\left(\alpha_1 u, \alpha_2 u\right);\ -\Delta u= u-u^3,\ u>0 \text{ in } \Omega, \ u=0 \text{ on } \partial \Omega,\ \ \alpha_1^2+\alpha_2^2=1, \ \alpha_1,\alpha_2>0 \right\} .
\]
\end{remark}

\smallbreak
The result of Theorem \ref{thm:BEC_kequations} i) was proved in \cite[Theorem 4.1]{AlamaBronsardMironescu} by using different techniques. 
The proof there consists of using the identity obtained when one multiplies the $i$-th equation of \eqref{eq:BEC_bounded} by $\frac{1}{2}u_i((\frac{u_i}{v_i})^2-1)$, where $u$ and $v$ are two positive solutions. 

\begin{remark}
If $\omega_i>\lambda_{V}$ for every $i$, it is straightforward to prove that $A_\omega\neq \emptyset$, and thus this set is a singleton if $B$ is positive definite. In fact, the level
$\min_H \mathcal{A}_\omega$ is clearly achieved, and if it were achieved by a vector with some trivial components, say for e.g. $(0,u_2,\ldots, u_k)$, then by taking $\varphi_1$ an eigenfunction associated with $\lambda_V$ and using the fact that $\omega_1>\lambda_V$ it is easy to show that
\[
\mathcal{A}_\omega (\varepsilon \varphi_1,u_2,\ldots, u_k)<\mathcal{A}_\omega (0,u_2,\ldots, u_k),
\]
a contradiction.
\end{remark}

As an immediate consequence of the second statement of Theorem \ref{thm:BEC_kequations}, we have the following classification result.

\begin{corollary}\label{cor:samebetas}
 Assume $\Omega$ is a bounded regular domain, $V(x)\equiv 0$,  $B=(\beta_{ij})_{ij}$ is a symmetric positive semidefinite matrix, and that there exists $\beta$ such that
 \[ \sum_{j=1}^{k} \beta_{ij} = \beta \quad \textrm{ for all }  i=1, \ldots, k.
 \]
 Then the unique positive critical point of $\mathcal{E}|_{\mathcal{M}}$ is $u = (U, \ldots, U)$ where $U$ is the unique positive critical point of $\mathcal{\tilde E}|_S$, where  
 \[
 \mathcal{\tilde E} (w)  = \frac{1}{2} \int_{\Omega} |\nabla w|^2 dx - \frac{\beta}{4} \int_{\Omega} w^4, \quad w \in H^1_0(\Omega)
 \]
 and $S = \{ w \in H^1_0(\Omega); \int_{\Omega} w^2dx =1\}.$
\end{corollary}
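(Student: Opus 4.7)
The plan is to exhibit the symmetric vector $(U,\ldots,U)$ as a positive critical point of $\mathcal{E}|_{\mathcal{M}}$ and then invoke the uniqueness part of Theorem \ref{thm:BEC_kequations} ii) to conclude that it is \emph{the} unique one.

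First I would observe that the row-sum hypothesis is precisely the statement $B\mathbf{1}=\beta\mathbf{1}$, where $\mathbf{1}=(1,\ldots,1)^T$. Consequently the positive semidefiniteness of $B$ yields $k\beta=\mathbf{1}^T B\mathbf{1}\ge 0$, so $\beta\ge 0$; in particular, the scalar $1\times 1$ matrix $(\beta)$ is itself positive semidefinite. Applying Theorem \ref{thm:BEC_kequations} ii) in the scalar case $k=1$ to the functional $\tilde{\mathcal{E}}$ and the constraint $S$ yields a unique positive critical point $U\in H^1_0(\Omega)$ of $\tilde{\mathcal{E}}|_S$. In particular $\int_\Omega U^2\,dx=1$ and there is a Lagrange multiplier $\omega\in\mathbb{R}$ such that $-\Delta U + \beta U^3=\omega U$ in $\Omega$.

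Next comes the key computation. For the symmetric ansatz $u=(U,\ldots,U)$ we have $u\in\mathcal{M}$ trivially, and the $i$-th equation of \eqref{eq:BEC_bounded} with $V\equiv 0$ reduces, via the row-sum identity $\sum_j\beta_{ij}=\beta$, to
\begin{equation}
-\Delta U + U\sum_{j=1}^k\beta_{ij}U^2 \;=\; -\Delta U + \beta U^3 \;=\; \omega U,
\end{equation}
which is exactly the scalar equation satisfied by $U$. Thus $u=(U,\ldots,U)$ solves \eqref{eq:BEC_bounded} with common Lagrange multipliers $\omega_1=\cdots=\omega_k=\omega$, meaning precisely that $u$ is a positive critical point of $\mathcal{E}|_{\mathcal{M}}$.

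Finally, Theorem \ref{thm:BEC_kequations} ii) guarantees that $\mathcal{E}|_{\mathcal{M}}$ admits exactly one positive critical point, so $(U,\ldots,U)$ must be the one, which completes the proof. There is no real obstacle here: the entire argument rides on Theorem \ref{thm:BEC_kequations} ii), and the only verification is the Euler--Lagrange computation above, which collapses because of the equal-row-sum structure of $B$. The single point worth double-checking is that the sign convention in the definition of $\tilde{\mathcal{E}}|_S$ is consistent with the reduced scalar equation $-\Delta U+\beta U^3=\omega U$, so that the ansatz is indeed a genuine critical point.
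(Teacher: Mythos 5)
Your proposal is correct and is essentially the paper's own argument: the paper offers no separate proof, presenting the corollary as an immediate consequence of Theorem \ref{thm:BEC_kequations} ii), and the intended reasoning is exactly yours \textemdash\ use the row-sum identity $B\mathbf{1}=\beta\mathbf{1}$ to check that the symmetric ansatz $(U,\ldots,U)\in\mathcal{M}$ solves \eqref{eq:BEC_bounded} with equal Lagrange multipliers, then invoke the uniqueness of the positive critical point of $\mathcal{E}|_{\mathcal{M}}$. The sign issue you flag at the end is real: as printed, $\tilde{\mathcal{E}}$ carries $-\frac{\beta}{4}\int_\Omega w^4$, whose scalar ``matrix'' $(-\beta)$ is not positive semidefinite for $\beta>0$, so Theorem \ref{thm:BEC_kequations} ii) with $k=1$ would not apply to it; the sign is a typo in the statement and should read $+\frac{\beta}{4}\int_\Omega w^4$, which is consistent with your reduced equation $-\Delta U+\beta U^3=\omega U$ and with the identity $\mathcal{E}(U,\ldots,U)=k\,\tilde{\mathcal{E}}(U)$ on $\mathcal{M}$ after correction.
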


In the case $\Omega={\mathbb R}^N$ (with trapping potential), our results are the following.

\begin{theorem}\label{thm:GP_R^N}
Assume that $B=(\beta_{ij})_{ij}$ is a symmetric matrix, $\Omega={\mathbb R}^N$, and $V(x)=|x|^2$. Then,
\begin{enumerate}[i)]
\item[i)] Assume that $B$ is positive definite. Given $\omega_1,\ldots, \omega_k\in \mathbb{R}$, the system \eqref{eq:BEC_bounded} admits at most one positive solution.
\item[ii)] Assume that $B$ is positive semidefinite. There exists a unique positive critical point of $\mathcal{E}|_{\mathcal{M}}$. In particular, the ground state is unique, up to sign.
\end{enumerate}
\end{theorem}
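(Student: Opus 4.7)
The plan is to mirror the bounded-domain argument of Theorem \ref{thm:BEC_kequations}, using the component-wise path $\gamma(t)=(\gamma_1(t),\ldots,\gamma_k(t))$ with
\[
\gamma_i(t)=\sqrt{(1-t)u_i^2+t\,v_i^2},\qquad t\in[0,1],
\]
for positive solutions/critical points $u,v$. Because $\gamma_i^2(t)$ is the linear interpolation of $u_i^2$ and $v_i^2$, the mass is preserved, $\int_{\mathbb{R}^N}\gamma_i^2(t)\,dx=1$, so in part ii) the path stays on $\mathcal{M}$ and Corollary \ref{cor:abstractcor} is available. Along the path: the gradient term $\int|\nabla\gamma_i|^2$ is convex by Lemma \ref{lemma:strictconvexity} (strictly so unless $u_i$ and $v_i$ are linearly dependent); the potential/mass terms $\int(V-\omega_i)\gamma_i^2$ are linear in $t$; and the interaction term $\tfrac{1}{4}\int[\gamma_1^2,\ldots,\gamma_k^2]\,B\,[\gamma_1^2,\ldots,\gamma_k^2]^T\,dx$ is a quadratic form in $\mathbb{R}^k$ applied to a segment in the variables $(\gamma_1^2,\ldots,\gamma_k^2)$, hence convex when $B$ is positive semidefinite and strictly convex as soon as some $u_i\ne v_i$ when $B$ is positive definite.

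The key remaining ingredient is the local Lipschitz property of $\gamma$ at $t=0$. On $\mathbb{R}^N$ Hopf's lemma is not available, and (as in Theorem \ref{prop:sharpdecay_qlS}) I would replace it by the sharp decay estimate \eqref{eq:decay_NLSsystem}: for every positive solution $(u_1,\ldots,u_k)$ of \eqref{eq:BEC_bounded} with $V(x)=|x|^2$,
\[
u_i(x)\sim |x|^{\frac{\omega_i-N}{2}}\,e^{-\frac{|x|^2}{2}}\qquad \text{as }|x|\to\infty.
\]
This is proved in three steps paralleling Theorem \ref{prop:sharpdecay_qlS}: a Brezis-Kato/Moser iteration yields $u_i\in L^\infty(\mathbb{R}^N)$ with $u_i\to 0$ at infinity (absorbing the coupling $u_i\sum_j \beta_{ij}u_j^2$ into the linear part), then Lemma \ref{lemma:ExponentialDecay} applied with $W(s)=\sqrt{1-\omega_i/s^2}$ gives the preliminary exponential bound, and a refined choice $W_\pm(s)=\sqrt{1-\omega_i/s^2\pm Cs^{-2}e^{-s^2/4}}$ produces two-sided barriers yielding the sharp asymptotic. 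Once this is established, any two positive solutions of \eqref{eq:BEC_bounded} sharing the same $\omega$ have identical decay rates, so $\delta^{-1}v_i\le u_i\le \delta v_i$ on $\mathbb{R}^N$ componentwise (interior boundedness away from $0$ follows from the strong maximum principle), and Corollary \ref{corollary_of_lemma:gpg} applied in each component gives that $\gamma$ is locally Lipschitz at $t=0$ in $H$.

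Part i) then follows immediately: when $B$ is positive definite, $t\mapsto\mathcal{A}_\omega(\gamma(t))$ is strictly convex for $u\neq v$, and Theorem \ref{th:abstracttheorem} iii) gives $\#A_\omega\le 1$. For part ii), if two positive critical points $u,v\in A_{\mathcal{E}}$ share the same Lagrange multipliers $\omega^u=\omega^v$, the previous step applies unchanged via Corollary \ref{cor:abstractcor}: the mass constraint $\int u_i^2=\int v_i^2=1$ rules out linearly dependent pairs $(u_i,v_i)$ with ratio different from $1$, so the strict convexity comes already from the gradient term. The hard part is the remaining case $\omega^u\ne\omega^v$: then \eqref{eq:decay_NLSsystem} gives different decay rates for some component, $u_i$ and $v_i$ are not globally comparable, and the path method fails as flagged in the introduction. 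I would handle this case by a direct argument using \eqref{eq:decay_NLSsystem}, combining a Picone-type identity (testing the $u$-equation against $v_i^2/u_i$ and vice versa, then summing in $i$) with the fact that the sharp asymptotics of $u_i/v_i$ force the boundary terms/remainders to vanish only when $\omega^u=\omega^v$, thereby reducing to the previous case. Finally, existence of a positive critical point of $\mathcal{E}|_{\mathcal{M}}$ is standard: the trapping potential gives the compact embedding $H\hookrightarrow L^2$, $\mathcal{E}$ is coercive and weakly lower semicontinuous on $\mathcal{M}$, and replacing components by their moduli (using $\mathcal{E}(|u_1|,\ldots,|u_k|)=\mathcal{E}(u_1,\ldots,u_k)$) together with the maximum principle gives a positive minimizer.
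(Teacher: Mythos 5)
Your part i), the decay estimates (Proposition \ref{prop:sharpdecay}), and the equal-multiplier case of part ii) all track the paper's proof: the same path $\gamma_i(t)=\sqrt{(1-t)u_i^2+t\,v_i^2}$, global comparison from \eqref{eq:decay_NLSsystem} in place of Hopf's lemma, Corollary \ref{corollary_of_lemma:gpg} for the Lipschitz property at $t=0$, and strict convexity from Lemma \ref{lemma:strictconvexity} plus the observation that the mass constraint and positivity exclude distinct, componentwise proportional critical points. The existence argument is also the paper's.

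The genuine gap is in the case you yourself flag as the hard one, $\omega^u\neq\omega^v$, where your sketch rests on a claim that is false: you assert that the sharp asymptotics force the boundary terms in the Picone-type identity ``to vanish only when $\omega^u=\omega^v$, thereby reducing to the previous case.'' The opposite is true, and it is exactly what makes the paper's alternative argument work. Every positive solution carries the \emph{same} Gaussian factor $e^{-|x|^2/2}$, the multipliers entering only through the polynomial prefactor $|x|^{(\omega_i-N)/2}$; hence the boundary terms
$\int_{\partial B_R}\partial_\nu v_i\,\bigl(u_i^2/v_i-v_i\bigr)$ vanish as $R\to\infty$ for \emph{arbitrary} multipliers $(\omega_i)$ and $(\mu_i)$. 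The multipliers are then eliminated not by asymptotics but by the algebraic cancellation
$\mu_i\int_{{\mathbb R}^N} v_i^2(w_i^2-1)\,dx=\mu_i\int_{{\mathbb R}^N}(u_i^2-v_i^2)\,dx=0$, valid because both critical points lie on $\mathcal{M}$. This yields, with $w_i=u_i/v_i$, the identity \eqref{eq:energy_identity},
$\mathcal{E}(u)=\mathcal{E}(v)+F(w)$ with
$F(w)=\sum_i\int_{{\mathbb R}^N}\tfrac12|\nabla w_i|^2v_i^2
+\tfrac14\int_{{\mathbb R}^N}[v_1^2(w_1^2-1)\,\ldots\,v_k^2(w_k^2-1)]\,B\,[v_1^2(w_1^2-1)\,\ldots\,v_k^2(w_k^2-1)]^T\geq 0$
by positive semidefiniteness of $B$; interchanging $u$ and $v$ gives $F(w)=0$, hence $\nabla w_i\equiv 0$, $w_i^2\equiv\alpha_i$, and $\alpha_i=1$ by the mass constraint, so $u=v$ directly — no case split and no reduction to equal multipliers occurs or is needed. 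As written, your plan requires the $\omega^u\neq\omega^v$ case precisely where your vanishing claim fails, and you never indicate how the Picone pairing would conclude $u=v$ rather than merely constrain the multipliers; the step must be replaced by the identity above (the Aftalion--Noris--Sourdis argument the paper follows).
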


The first result is a generalization of \cite[Theorem 1.3-(1)]{AftalionNorisSourdis} for systems with an arbitrary number of equations. Moreover, our proof seems simpler. The second results is a generalization of \cite[Theorem 1.3-(2)]{AftalionNorisSourdis} (which holds for two equations).

\smallbreak

In the proof of the previous theorem, a key step is the \emph{sharp decay at infinity} of the solutions of \eqref{eq:BEC_bounded}.

\begin{proposition}\label{prop:sharpdecay}
Let $(u_1,\ldots, u_k)\in H$ be a positive solution of 
\[
-\Delta u_i +|x|^2 u_i +u_i \sum_{j=1}^k \beta_{ij} u_j^2=\omega_i u_i \qquad \text{ in } \mathbb{R}^N.
\] Then, for every $i=1,\ldots, k$,
\[
u_i(x)\sim |x|^{\frac{\omega_i-N}{2}}e^{-\frac{1}{2}|x|^2} \qquad \text{ as } |x|\to \infty.
\]
\end{proposition}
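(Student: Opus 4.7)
The plan is to follow the three-step strategy used to prove Theorem~\ref{prop:sharpdecay_qlS}, adapted to the coupled Gross-Pitaevskii system; the key idea is that the cubic cross-interactions $u_i \sum_j \beta_{ij} u_j^2$ can be treated as a super-exponentially small perturbation once a crude Gaussian decay is available, so that the sharp asymptotics are driven by the linear Ornstein-Uhlenbeck-type operator $-\Delta + (|x|^2 - \omega_i)$. First I would show that each $u_i$ belongs to $L^\infty(\mathbb{R}^N)$ and satisfies $u_i(x) \to 0$ as $|x| \to \infty$. Since $(u_1, \ldots, u_k) \in H \hookrightarrow (H^1 \cap L^4)^k$, the nonlinearity $u_i \sum_j \beta_{ij} u_j^2$ is controlled in suitable Lebesgue spaces, and a Brezis-Kato/Moser iteration together with standard interior gradient estimates (as in Step~1 of the proof of Theorem~\ref{prop:sharpdecay_qlS}) yields $\|u_i\|_\infty + \|\nabla u_i\|_\infty < \infty$; combined with $|x|^2 u_i^2 \in L^1(\mathbb{R}^N)$, the mean value theorem then gives $u_i(x) \to 0$ uniformly in $i$.

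Next I would establish a non-optimal Gaussian decay. Rewriting each equation as
\[
-\Delta u_i + \Bigl(|x|^2 - \omega_i + \sum_{j=1}^k \beta_{ij} u_j^2\Bigr) u_i = 0,
\]
the uniform vanishing from the previous step implies that for every $\varepsilon > 0$ there exists $R > 0$ (independent of $i$) such that in $\mathbb{R}^N \setminus B_R(0)$ the coefficient of $u_i$ is bounded below by $(1-\varepsilon)(|x|^2 - \omega_i)$. Applying Lemma~\ref{lemma:ExponentialDecay} with $\gamma = -2$ and $W(s) = \sqrt{(1 - \omega_i s^{-2})(1 - \varepsilon)}$ produces a positive supersolution decaying like $e^{-(1-\varepsilon')|x|^2/2}$, and the maximum principle then yields $u_i(x) \leq C e^{-|x|^2/4}$ for large $|x|$.

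With the crude bound in hand, $\sum_j \beta_{ij} u_j^2 \leq C e^{-|x|^2/2}$, so each $u_i$ satisfies
\[
-\Delta u_i + \bigl(|x|^2 - \omega_i + r_i(x)\bigr) u_i = 0, \qquad |r_i(x)| \leq C e^{-|x|^2/2},
\]
outside a sufficiently large ball. I would then invoke Lemma~\ref{lemma:ExponentialDecay} twice with $\gamma = -2$ and the weights $W_\pm(s) = \sqrt{1 - \omega_i s^{-2} \pm C s^{-2} e^{-s^2/2}}$, which satisfy the required hypotheses since $W_\pm'(s) = O(s^{-3})$ at infinity, to produce sub- and super-solutions $h_\pm$. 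The maximum principle sandwiches $u_i$ between constant multiples of $h_\pm$ outside a ball; since $s^{-2} e^{-s^2/2}$ is integrable,
\[
\int_\rho^{|x|} W_\pm(s)\, s\, ds = \int_\rho^{|x|} \sqrt{s^2 - \omega_i}\, ds + O(1) = \frac{|x|^2}{2} - \frac{\omega_i}{2}\ln|x| + O(1),
\]
and combined with the prefactor $|x|^{-(N-1)/2 + \gamma/4} = |x|^{-N/2}$ furnished by the lemma this yields $h_\pm(x) \sim |x|^{(\omega_i - N)/2} e^{-|x|^2/2}$, which is the claimed sharp decay.

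The principal obstacle is the coupling: the decay of each $u_i$ depends on that of all other components $u_j$, so the crude bound in the second step must be proved simultaneously and uniformly for all indices before the sharp asymptotics can be extracted. Note that no sign information on $B$ is required, since in the final step the interaction is bounded in absolute value and enters symmetrically into both comparison inequalities; in particular the argument applies under the standing hypothesis \eqref{eq:assumptions_B} without any further restriction on the off-diagonal entries.
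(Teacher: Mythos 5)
Your proposal is correct and matches the paper's own proof, which explicitly adapts the three-step argument of Theorem \ref{prop:sharpdecay_qlS}: Moser iteration plus the mean value theorem for boundedness and vanishing at infinity, a crude bound $u_i\leq Ce^{-|x|^2/4}$ via Lemma \ref{lemma:ExponentialDecay} and the maximum principle, and then the sharp asymptotics from comparison functions $h_\pm$ built with the perturbed weights $W_\pm$, using that the Gaussian correction is integrable. Your closing observations \textemdash\ that the interaction only enters through its absolute value (so no sign condition on $B$ beyond \eqref{eq:assumptions_B} is used) and that the crude bound must hold for all components before the sharp step \textemdash\ are consistent with how the paper treats the term $\bigl|\sum_{j}\beta_{ij}u_j^2\bigr|\leq \alpha e^{-|x|^2/2}$; the remaining discrepancies (multiplicative $\varepsilon$ versus additive $\kappa$ in the crude step, $e^{-s^2/2}$ versus $e^{-s^2/4}$ in the weights) are cosmetic.
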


This result will replace Hopf's Lemma (not available when working in $\mathbb{R}^N$) and imply a global comparison principle between positive solutions; this will be used in the proof of item i). Since the decay depends on the Lagrange multipliers $\omega_i$, in order to prove ii) we will not have at our disposal a global comparison between solutions, and instead of using Corollary \ref{cor:abstractcor} we will combine the sharp decay information with the strategy previously used in \cite{AftalionNorisSourdis}.

\smallbreak

Let us proceed for the proofs of the results.

\begin{proof}[Proof of Theorem {\rm{\ref{thm:BEC_kequations}}}]

Let $A$ be either \eqref{eq:A_action2} (for item (i)) or \eqref{eq:A_energy2} (for item (ii)). Given $u = (u_1, \ldots, u_k), v = (v_1, \ldots, v_k)\in A$ with $u\neq v$, take 
\[
\gamma(t)=(\gamma_1(t),\ldots, \gamma_k(t)),\qquad \text{ with } \gamma_i(t)=\sqrt{(1-t) u_i^2+t v_i^2}.
\]
By the standard Hopf Lemma, we have that the comparison condition \eqref{eq:hipderivative2} in Section \ref{sec:aux-res} is satisfied for each pair $(u_i,v_i)$, and Corollary \ref{corollary_of_lemma:gpg} yields that $\gamma$ is Lipschitz continuous at 0. The assumption that $(\beta_{ij})$ is positive semi-definite ensures that the quadratic form
\[
{\mathbb R}^k\to {\mathbb R},\qquad z\mapsto z B z^{T} =\sum_{i} \beta_{ii} z_i^2 + 2\sum_{i<j} \beta_{ij}z_i z_j
\]
is convex and, using Lemma \eqref{lemma:strictconvexity}, we infer that
\[
t \mapsto  \mathcal{A}_\omega(\gamma(t)) \quad \text{and} \quad t\mapsto  \mathcal{E}(\gamma(t)) \quad \text{are strictly convex on [0,1]}.
\]
Indeed, it is clear that $t \mapsto  \mathcal{A}_\omega(\gamma(t))$ is strictly convex if $B$ is positive definite.  In the second case, with $u, v \in \mathcal{M}$ and since $u \neq v$, there exists $i \in \{1, \ldots, k\}$ such that $u_i$ and $v_i$ are linearly independent and, in this case, the strict convexity of $t\mapsto  \mathcal{E}(\gamma(t))$ follows from the strict convexity of the gradient term given by Lemma \ref{lemma:strictconvexity}. Therefore, in the case of i) the conclusion follows by Theorem \ref{th:abstracttheorem}, and we infer that $A_{\mathcal E}$ is empty or a singleton by Corollary \ref{cor:abstractcor}. As above, recall that $A_\omega\neq\emptyset$ by standard compactness arguments.
\end{proof}

\begin{proof}[Proof of Proposition {\rm{\ref{prop:sharpdecay}}}]
The proof closely follows the proof of Theorem \ref{prop:sharpdecay_qlS} and we only highlight the differences. 
First of all, observe that $u_i, |\nabla u_i|\in L^\infty(\mathbb{R}^N)$, and $u_i\to 0$ as $|x|\to \infty$. In fact, note that the potential $|x|^2$ is bounded on the ball $B_{2\sqrt{\omega_i}}(0)$, being bounded away from zero outside, and the Moser iteration
scheme applies. Next, as in
Step 2. in the proof of Theorem \ref{prop:sharpdecay_qlS} one has 
that for each $i$
\[
-\Delta u_i+(|x|^2-\omega_i-\kappa) u_i\leq 0.
\]
and analogously one obtains 
\[
h(x)\leq C e^{-\frac{|x|^2}{4}} \qquad |x| \geq R.
\]
for some $C, R > 0$. Then,   
\[
\left| \sum_{j=1}^k \beta_{ij} u_j^2\right|\leq \alpha e^{-\frac{|x|^2}{2}} \,, 
\]
and therefore 
\[
-\Delta u_i+(|x|^2-\omega_i-\alpha e^{-\frac{|x|^2}{2}})u_i\leq 0, \quad -\Delta u_i+(|x|^2-\omega_i+\alpha e^{-\frac{|x|^2}{2}})u_i\geq 0,
\]
in ${\mathbb R}^N\setminus \overline{B_R(0)}$. The proof now follows as
in the proof of Theorem \ref{prop:sharpdecay_qlS} with obvious modifications. 
\end{proof}

\smallbreak

\begin{proof}[Proof of Theorem {\rm{\ref{thm:GP_R^N}}}] i) We can repeat the proof of Theorem \ref{thm:BEC_kequations}-i) almost word by word, with the exception that now, instead of using Hopf's lemma, we get the comparison condition \eqref{eq:hipderivative2} in Section \ref{sec:aux-res} for any two possible positive solutions of \eqref{eq:BEC_bounded} from the sharp decay of Proposition \ref{prop:sharpdecay}. 
\smallbreak
\noindent ii) Our main results Theorem \ref{th:abstracttheorem} and Corollary \ref{cor:abstractcor}
cannot be directly applied in this case, since two critical
points $u$ and $v$ can have different Lagrange multipliers $\omega$, 
and therefore different decay at infinity. In such a situation it is 
not clear how to choose a path $\gamma$ which is Lipschitz continuous
at the endpoints. 

Instead, our strategy will basically follow the ideas of  \cite[Theorem 1.3-(2)]{AftalionNorisSourdis} (see also \cite[Theorem 4.1]{AlamaBronsardMironescu}), and we refer to it for the exact computations and more details. Our main contribution here is 
an extension to $k$ equations and weaker assumptions on $B$.
The key step is an use of the sharp estimates of Proposition \ref{prop:sharpdecay}. 

For any two positive critical points  $u,v$ we claim that 
\begin{equation}\label{eq:energy_identity}
\mathcal{E}(u_1,\ldots, u_k)=\mathcal{E}(v_1,\ldots, v_k)+F(w_1,\ldots, w_k)
\end{equation}
where $w_i := u_i/v_i$ and 
\begin{align*}
F(w_1,\ldots, w_k) &=\sum_{i=1}^k \int_{{\mathbb R}^N} \frac{|\nabla w_i|^2v_i^2}{2} + \frac{1}{4} \int_{{\mathbb R}^N} [v_1^2 (w_1^2-1) \ldots v_k^2 (w_k^2-1)] B [v_1^2 (w_1^2-1) \ldots v_k^2 (w_k^2-1)]^T \,.
\end{align*}
\smallbreak 
Assume that $(u_i)$ and $(v_i)$ satisfy \eqref{eq:BEC_bounded} respectively with Lagrange multipliers $(\omega_i)$ and $(\mu_i)$. 
By using integration by parts, we have, for every $R>0$ fixed,
\[
\int_{B_R(0)} -\Delta v_i v_i (w_i^2-1)+ \int_{\partial B_R(0)} \partial_\nu v_i \left(\frac{u_i^2}{v_i}-v_i\right)=\int_{B_R(0)} \left( |\nabla v_i|^2(w_i^2-1)+2v_i w_i \nabla v_i \cdot \nabla w_i\right) \,.
\]
Observe that, since $\nabla v_i\in L^\infty$ and 
\[
u_i(x)\sim |x|^{\frac{\omega_i-N}{2}}e^{-\frac{1}{2}|x|^2},\qquad v_i(x)\sim |x|^{\frac{\mu_i-N}{2}}e^{-\frac{1}{2}|x|^2},
\]
then
\[
\left| \int_{\partial B_R(0)} \partial_\nu v_i \left(\frac{u_i^2}{v_i}-v_i\right)\right| \leq C R^{N-1} R^{\omega_i-N -\frac{\mu_i-N}{2}}e^{-\frac{R^2}{2}} +C R^{N-1}  R^{\frac{\mu_i-N}{2}}e^{-\frac{1}{2}R^2}
\to 0 \quad \textrm{ as } R \to \infty \,.
\]
Consequently, since $u$ solves \eqref{eq:BEC_bounded}
\[
\int_{{\mathbb R}^N} (|\nabla v_i|^2 (w_i^2-1) +2 v_i w_i \nabla v_i \cdot \nabla w_i) =-\int_{{\mathbb R}^N} (|x|^2v_i^2 +\beta_{ii}v_i^4 +v_i^2 \sum_{j\neq i} \beta_{ij} v_j^2) (w_i^2-1),
\]
where we used that  $u_i$ and $v_i$ have the same mass, that is, $(u_1,\ldots, u_k), (v_1,\ldots, v_k)\in \mathcal{M}$, and therefore
\[
 \mu_i\int_{{\mathbb R}^N} v_i^2 (w_i^2-1)=\mu_i \int_{{\mathbb R}^N} (u_i^2-v_i^2)=0.
\]
Now, by using this identity in $\mathcal{E}(u_1,\ldots, u_k)=\mathcal{E}(v_1 w_1,\ldots, v_k w_k)$, we get \eqref{eq:energy_identity}.

Since $B$ is positive semidefinite, $F(w_1,\ldots, w_k)\geq 0$ and 
\eqref{eq:energy_identity} yields $\mathcal{E}(u_1\ldots, u_k) \geq \mathcal{E}(v_1\ldots, v_k)$. By interchanging $u$ and $v$, we have $\mathcal{E}(u_1\ldots, u_k)=\mathcal{E}(v_1\ldots, v_k)$, and thus $F(w_1,\ldots, w_k)=0$.
This, in turn, gives $w_i^2\equiv \alpha_i$, for some constant $\alpha_i$. Actually $\alpha_i=1$, since $\int_{\mathbb{R}^N}u_i^2\, dx=\int_{\mathbb{R}^N} v_i^2\, dx=1$.

Once again $A_\omega\neq\emptyset$ by standard compactness arguments.
\end{proof}

 \section*{Acknowledgements}
 D. Bonheure is supported by INRIA - Team MEPHYSTO, 
MIS F.4508.14 (FNRS), PDR T.1110.14F (FNRS) 
\& ARC AUWB-2012-12/17-ULB1- IAPAS. 

E. Moreira dos Santos is partially supported by CNPq  projects 309291/2012-7, 490250/2013-0 and 307358/2015-1 and FAPESP projects 2014/03805-2 and 2015/17096-6. 

H. Tavares is supported by Funda\c c\~ao para a Ci\^encia e Tecnologia through the program \emph{Investigador FCT} and the project PEst-OE/EEI/LA0009/2013, and by the project ERC Advanced Grant  2013 n. 339958 ``Complex Patterns for Strongly Interacting Dynamical Systems - COMPAT''.

D. Bonheure \& E. Moreira dos Santos are partially supported by a bilateral agreement FNRS/CNPq. 

D. Bonheure, J. F\"{o}ldes, and A. Salda\~{n}a are supported by MIS F.4508.14 (FNRS).

Part of this work was done while the third author was visiting the Dipartimento di Matematica of the Universit\`a di Roma {\it{Sapienza}}, whose hospitality he gratefully acknowledges.

\end{document}